\documentclass[a4paper,reqno,11pt,oneside]{amsart}
\usepackage[left=2.7cm,right=2.7cm,top=3.5cm,
bottom=3.5cm]{geometry}
\usepackage[colorlinks=true,urlcolor=blue,
citecolor=red,linkcolor=blue,linktocpage,pdfpagelabels,
bookmarksnumbered,bookmarksopen]{hyperref}
\usepackage[english]{babel}
\usepackage{graphicx}
\usepackage{mathrsfs}
\usepackage{amssymb, amsmath, amsfonts, amsthm, mathtools}
\usepackage{xcolor}
\usepackage[shortlabels]{enumitem}
\usepackage{amsaddr}
\usepackage{latexsym}
\usepackage{etoolbox}
\usepackage{nccmath}

\makeatletter
\newtheorem*{rep@theorem}{\rep@title}
\newcommand{\newreptheorem}[2]{%
\newenvironment{rep#1}[1]{%
 \def\rep@title{#2 \ref{##1}}%
 \begin{rep@theorem}}%
 {\end{rep@theorem}}}
\makeatother

\makeatletter
\newcommand{\proofstep}[2]{%
  \par
  \addvspace{\medskipamount}%
  \noindent\emph{Step #1: #2}\par\nobreak
  \addvspace{\smallskipamount}%
  \@afterheading
}
\makeatother

\allowdisplaybreaks

\makeatletter
\@namedef{subjclassname@2020}{%
  \textup{2020} Mathematics Subject Classification}
\makeatother

\def\R{{\mathbb{R}}}
\def\N{{\mathbb{N}}}
\def\po{{\partial \Omega}}
\def\sphere{{\mathbb{S}^{N - 1}}}
\def\sphereplus{{\mathbb{S}_+^{N - 1}}}
\def\divergence{\mathrm{div }}
\def\Gz{{\Gamma_{0, \Omega}}}
\def\Gzp{{\Gamma_{0, \varphi}}}
\def\G{{\Gamma_{\Omega}}}
\def\Gp{{\Gamma_\varphi}}
\def\Op{{\Omega_\varphi}}
\def\OD{{\Omega_D}}
\def\cone{{\Sigma_D}}
\def\haus{{\mathcal{H}^{N - 1}}}
\def\sobspace{{H_0^1(\Omega \cup \Gamma_\Omega)}}
\def\capacity{{\text{cap}}}
\def\sobspcap{{H_0^1(\Omega; \cone)}}

\def\pc{{\partial\cone}}

\def\supp{{\text{supp }}}
\def\BTR{{B_{2R}(y_k)}}
\def\BFR{{B_{4R}(z_k)}}
\def\btr{{B_{\widetilde R}(0)}}
\def\ork{{\Omega_{R, k}}}
\def\tuk{{\widetilde{u}_k}}
\def\TUK{{v_k}}
\def\ctd{{C^{2, \alpha}(\overline{D})}}
\def\hu{{\widehat{u}}}
\def\F{{\mathcal{F}}}
\def\uphi{{u_\varphi}}
\def\vol{{\mathcal{V}}}
\def\M{{\mathcal{M}}}
\def\wtu{{\widetilde{u}}}
\def\wtv{{\widetilde{v}}}
\def\Gzreg{{\Gamma_{0, \Omega}^*}}
\def\GZsing{{\Gamma_{0, \Omega}^{\text{sing}}}}

\numberwithin{equation}{section}

\theoremstyle{plain}
\newtheorem{theorem}{Theorem}[section]
\newreptheorem{theorem}{Theorem}
\theoremstyle{plain}
\newtheorem{prop}[theorem]{Proposition}
\newreptheorem{prop}{Proposition}
\theoremstyle{plain}
\newtheorem{lemma}[theorem]{Lemma}
\newreptheorem{lemma}{Lemma}
\theoremstyle{plain}
\newtheorem{cor}[theorem]{Corollary}
\theoremstyle{definition}

\theoremstyle{definition}
\newtheorem{definition}[theorem]{Definition}
\theoremstyle{definition}
\newtheorem{remark}[theorem]{Remark}
\theoremstyle{definition}

\theoremstyle{plain}

\theoremstyle{definition}

\begin{document}

\renewcommand{\labelenumi}{\textit{(\roman{enumi})}}

\title[Non-radial minimizers for the first eigenvalue of the Laplacian in cones]{Non-radial minimizers for the first eigenvalue of the Laplacian in  cones and a related overdetermined problem}

\author[Danilo Gregorin Afonso]{Danilo Gregorin Afonso}
\address[Danilo Gregorin Afonso]{Università San Raffaele Roma \\ Via di Val Cannuta 247, 00166 Roma, Italy}
\email{danilo.afonso@uniroma5.it}

\subjclass[2020]{35N25, 35P15, 49Q10}

\keywords{Overdetermined elliptic problems, mixed boundary value problems, non-convex cones, symmetry breaking, shape optimization of Laplacian eigenvalues}

\date{\today}

\begin{abstract}
    In this work, we consider relative overdetermined problems for the first eigenfunction of the Laplacian for domains in cones, and the related question of minimizing the first eigenvalue among sets of a given fixed measure. By means of a shape derivative analysis, we show that the spherical sector is a critical shape and obtain a geometric condition on the cone for its stability/instability. By a concentration-compactness argument, we prove the existence of a minimizer, which moreover is bounded, open, connected, and whose relative boundary is regular almost everywhere. By another domain variation argument, we conclude that the minimizers admit a solution for the overdetermined problem.
\end{abstract}

\maketitle

\section{Introduction}
\label{sec:cone_intro}

Let $D \subset \sphere$ be a smooth domain (i.e. a connected open set) on the unit sphere $\sphere$ of $\R^N$, for $N \geq 3$. The cone spanned by $D$ is the set $\cone$ defined as
\begin{equation*}
    \cone \coloneqq \{x \in \R^N \ : \ x = r q, \ r > 0, \ q \in D\}.
\end{equation*}
For a domain $\Omega \subset \cone$, we set
\begin{equation*}
    \Gz \coloneqq \po \cap \cone, \quad \G \coloneqq (\po \cap \partial \cone)\setminus \partial \Gz,
\end{equation*}
and assume that $\haus(\G) > 0$, where $\haus$ denotes the $(N - 1)$-dimensional Hausdorff measure. 

Our aim is to study the following relative overdetermined problem:
\begin{equation}
    \label{eq:cone_overdet_pde}
    \left\{
    \begin{array}{rcll}
        - \Delta u & = & \lambda_1(\Omega) u & \quad \text{ in } \Omega \\
        u & = & 0 & \quad \text{ on } \Gz \\
        \displaystyle \mfrac{\partial u}{\partial \nu} & = & 0 & \quad \text{ on } \G \setminus \{O\} \\
        \mfrac{\partial u}{\partial \nu} & = & \text{constant} & \quad \text{ on } \Gz
    \end{array}
    \right.
    ,
\end{equation}
where $\lambda_1(\Omega)$ is the first eigenvalue of $- \Delta$ in $\Omega$ with the mixed boundary conditions $u = 0$ on $\Gz$ and $\mfrac{\partial u}{\partial \nu} = 0$ on $\G$ (we refer to Section \ref{sec:cone_prelim} for precise details). By a solution to \eqref{eq:cone_overdet_pde}, we will mean either the domain $\Omega$ or the corresponding solution $u$, with the precise meaning being clear from the context.  We will show that there exist domains $D \subset \sphere$ such that \eqref{eq:cone_overdet_pde} admits non-radial solutions, which are domains that minimize the shape functional $\Omega \mapsto \lambda_1(\Omega)$.

To clarify our motivations, let us begin by recalling that the problem with the free boundary in $\R^N$, which reads as
\begin{equation}
    \label{eq:cone_overdet_RN}
    \left\{
    \begin{array}{rcll}
        - \Delta u & = & \lambda_1(\Omega) u & \quad \text{ in } \Omega \\
        u & = & 0 & \quad \text{ on } \po \\
        \mfrac{\partial u}{\partial \nu} & = & \text{constant} & \quad \text{ on } \po
    \end{array}
    \right.
    ,
\end{equation}
with $\Omega$ bounded, admits a solution if and only if $\Omega$ is a ball. This can be proved, under suitable assumptions on the regularity of the boundary, by the method of moving planes, introduced in \cite{Alexandrov1962} in a geometric setting and later adapted to study symmetry for elliptic problems in \cite{Serrin1971}. We note that the positivity requirement for the method is satisfied, since, as is well known, the first Dirichlet eigenfunction of the Laplacian does not change sign.

A related question, which is also of great interest in itself, regards the shape optimization problem of minimizing the functional
\begin{equation} 
    \label{eq:cone_intro_functional}
    \Omega \mapsto \lambda_1(\Omega)
\end{equation}
among some suitable class of sets with a given fixed measure. It was first conjectured by Lord Rayleigh (\cite{Rayleigh1877}) that balls should minimize \eqref{eq:cone_intro_functional} in $\R^N$. The development of symmetrization theory allowed for the rigorous proof of the well-known Faber--Krahn inequality (\cite{LevitinMangoubiPolterovich2024published}, see also \cite{BrascoDePhilippisVelichkov2015} for a sharp quantitative form and \cite{BucurFreitas2017} for a free-boundary approach), which states that the ball is indeed the unique (up to negligible sets, in the capacity sense) minimizer for \eqref{eq:cone_intro_functional}. In $\R^N$, the relation between this shape optimization problem and \eqref{eq:cone_overdet_RN} comes from the well-known Hadamard formula for the shape derivative of simple eigenvalues of the Dirichlet-Laplacian (see \cite[Section 5.7]{HenrotPierre2018}), which in particular implies that any critical domain for \eqref{eq:cone_intro_functional}, if regular enough, admits a solution to \eqref{eq:cone_overdet_RN}. By a critical domain, we mean one where the shape derivative is zero with respect to all volume-preserving deformations.

Observe that for any cone $\Sigma_D$ spanned by a smooth domain $D \subset \sphere$, the spherical sector 
\begin{equation*}
    \Omega_D \coloneqq \Sigma_D \cap B_1(O)
\end{equation*}
admits a solution to the overdetermined problem \eqref{eq:cone_overdet_pde}, because the corresponding eigenfunction is simply the restriction of the first Dirichlet eigenfunction on the ball $B_1(O)$, which is radial, to the cone $\cone$. Hence problem \eqref{eq:cone_overdet_pde} is a natural generalization of \eqref{eq:cone_overdet_RN} to the relative setting, and it is natural to wonder whether or not the spherical sector is then the only solution.

There are (at least) two possible paths in the quest for new domains that admit a solution to \eqref{eq:cone_overdet_pde}: either to search for other critical shapes at a higher level (most likely of saddle type) for $\Omega \mapsto \lambda_1(\Omega)$ in a class of sufficiently smooth open sets, or to understand conditions under which $\Omega_D$ is not a minimizer, in which case a minimizer, if it exists (and is sufficiently smooth), will admit a solution for \eqref{eq:cone_overdet_pde}. 

It is important to point out that neither the method of moving planes nor symmetrization techniques are available in general cones. In fact, an isoperimetric inequality (which is a key ingredient for symmetrization theory) for convex cones was first shown in \cite{LionsPacella1990}, and later generalized to wider classes of cones in \cite{BaerFigalli2017, PacellaTralli2021}. This leads to Faber--Krahn-type inequalities in these classes, which, however, give only a partial answer to the problems we pose. 

To my knowledge, both the problem of deciding if the spherical sector is the only solution for \eqref{eq:cone_overdet_pde} and of finding a minimizer for \eqref{eq:cone_intro_functional} were still open in general cones.

Next, we shall briefly describe our results, outlining the strategy of the proofs. Our approach is inspired by ideas from \cite{BucurFreitas2017, IacopettiPacellaWeth2022, AfonsoIacopettiPacella2024Energypublished}. We refer to the respective sections for precise statements.

\begin{enumerate}[label=\roman*)]
    \item In Theorem \ref{thm:cone_stability}, we analyse the question of whether or not the spherical sector is a minimizer for \eqref{eq:cone_intro_functional}, because, if it is not, there is hope to find more exotic sets, the minimizers, which should admit a solution to \eqref{eq:cone_overdet_pde} (in view of Hadamard's formula). Theorem \ref{thm:cone_stability} gives a condition on the geometry of $D$ for which the spherical sector is/is not a local minimizer. This is done by computing the first and second shape derivatives of the eigenvalue in the class of polar graphs, following the approach of \cite{IacopettiPacellaWeth2022, AfonsoIacopettiPacella2024Energypublished}.

    \item In Theorem \ref{thm:cone_existence}, we give a condition under which the existence of minimizers for $\Omega \mapsto \lambda_1(\Omega)$ with a volume constraint is guaranteed. To this aim, we argue as in \cite{IacopettiPacellaWeth2022}, performing a delicate analysis of minimizing sequences, applying the concentration-compactness theorem, and making use of the fact that the cone ``flattens out" at infinity.

    \item Through a series of Lemmas and Propositions, we adapt arguments of \cite{Bucur2012, BriançonLamboley2009} to show that minimizers for \eqref{eq:cone_intro_functional} must be bounded, open, connected, and enjoy good regularity properties; see Section \ref{sec:cone_bound_conn_reg}.

    \item In Theorem \ref{thm:cone_overdet_minimizer}, we show that the minimizers for \eqref{eq:cone_intro_functional} admit a solution to the differential problem \eqref{eq:cone_overdet_pde} (in a weaker sense, which depends on the regularity of $\partial \Omega$), by computing ``localized" shape derivatives and exploiting the minimality.

    \item Theorem \ref{thm:cone_main} collects the previous results and shows the existence of non-radial minimizers for \eqref{eq:cone_intro_functional}, which admit non-radial solutions to \eqref{eq:cone_overdet_pde}.
\end{enumerate}

Analogous questions regarding the torsion problem (see Section \ref{sec:cone_prelim}) in cones and cylinders were considered in \cite{IacopettiPacellaWeth2022} (where the authors also study the relative isoperimetric problem), respectively in  \cite{CaldiroliIacopettiPacella2025}. General stability analysis for problems with autonomous nonlinearities in cones and cylinders is carried out in \cite{AfonsoIacopettiPacella2024Energypublished}. Other problems related to the eigenvalues of the Laplacian with mixed boundary conditions are considered in \cite{MazzoleniPellacciVerzini2020, ButtazzoVelichkov2016}.

An interesting remark is that the threshold for stability that we prove in Theorem \ref{thm:cone_stability} is precisely the same as that found by \cite{IacopettiPacellaWeth2022} for the torsion problem and \cite{AfonsoIacopettiPacella2024Energypublished} for nonlinear problems. It is expressed in terms of the first nonzero Neumann eigenvalue of the Laplace-Beltrami operator on the domain $D$ that spans the cone, which suggests that this eigenvalue, and not convexity, should be the correct parameter for the study of the isoperimetric inequality and related properties, as well as for an analysis of bifurcation from the radial objects.

Let us point out that the geometry of the cone plays a key role throughout the paper, not only as a motivation for its analogy with the ball. To begin with, we note that the class of polar graphs considered in Section \ref{sec:cone_stability} is quite well-behaved, which allows for a relatively easy computation of the second shape derivative of $\Omega \mapsto \lambda_1(\Omega)$. Next, in Section \ref{sec:cone_existence}, the crucial element in the proof of existence of a minimizer is the fact that the cone ``flattens out" at infinity, which allows us to compare a minimizing sequence with the eigenfunction of a half-ball. Finally, in Section \ref{sec:cone_bound_conn_reg}, the invariance of the cone by scaling allows us to consider equivalent minimization problems with volume penalization, for which the proof of qualitative properties is much easier.

This paper is organized as follows: Section \ref{sec:cone_prelim} collects some notation and preliminaries. In Section \ref{sec:cone_stability}, we study variations of \eqref{eq:cone_intro_functional} in the class of polar graphs, compute first and second shape derivatives with respect to these variations, and analyse the stability of the spherical sector as a critical shape under a volume constraint. Section \ref{sec:cone_existence} concerns the existence of a minimizer for \eqref{eq:cone_intro_functional}. Section \ref{sec:cone_bound_conn_reg} is devoted to the study of qualitative properties of the minimizers for \eqref{eq:cone_intro_functional}. Finally, in Section \ref{sec:cone_conclusion}, we prove the existence of non-radial minimizers for \eqref{eq:cone_intro_functional} and non-radial solutions for \eqref{eq:cone_overdet_pde}.

\section{Preliminaries}
\label{sec:cone_prelim}

\subsection{Notation}
Throughout the paper, we denote by $D$ a smooth domain (a connected open set) on $\sphere$, the unit sphere in $\R^N$, for $N \geq 3$, and denote by  $\cone$ the cone spanned by $D$:
\begin{equation*}
    \cone \coloneqq \{x \in \R^N \ : \ x = r q, \ r > 0, \ q \in D\}.
\end{equation*}
We assume that the domain $D$ is such that $\cone$ is a uniformly Lipschitz set.

For a domain $\Omega \subset \cone$, we set
\begin{equation*}
    \Gz \coloneqq \po \cap \cone, \quad \G \coloneqq (\po \cap \pc) \setminus \partial \Gz,
\end{equation*}
and assume that $\haus(\G) > 0$, where $\haus$ denotes the $(N - 1)$-dimensional Hausdorff measure.

\begin{remark}
    Since we chose the origin $O$ as the vertex of the cone, any radial regular function $u$ automatically satisfies $\mfrac{\partial u}{\partial \nu} = 0$ on $\pc \setminus \{O\}$.
\end{remark}

\subsection{A divergence theorem in sector-like domains}
Following the terminology introduced by \cite{PacellaTralli2020}, by a sector-like domain we mean an open set $\Omega \subset \cone$ such that $\Gz$ is a smooth $(N - 1)$-dimensional manifold, while $\partial \Gz = \partial \Gamma_\Omega \subset \pc \setminus\{O\}$ is a smooth $(N - 2)$-dimensional manifold.

\begin{lemma}[{\cite[Lemma 2.1]{PacellaTralli2020}}]
    \label{lemma:cone_divergence_theorem_sector_like_domains}
    Let $\Omega \subset \cone$ be a sector-like domain and let $F: \overline{\Omega} \to \R^N$ be a vector field such that
    \begin{align}
        & F \in C^1((\Omega \cup \Gz \cup \G) \setminus \{O\}) \cap L^2(\Omega), \nonumber \\ 
        & \divergence F \in L^1(\Omega). \nonumber
    \end{align}
    Then
    \begin{equation*}
        \int_\Omega \divergence F \ dx = \int_\Gz F \cdot \nu \ d\sigma_{\Gz} + \int_{\G \setminus \{O\}} F \cdot \nu \ d\sigma_{\pc}.
    \end{equation*}
\end{lemma}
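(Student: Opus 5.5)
The plan is to cut out a small ball around the vertex and the $(N-2)$-dimensional edge $\partial\Gz$, apply the classical divergence theorem on the remaining Lipschitz piece, and then let the cut-off shrink. Removing the edge is needed because near it $\Omega$ is only a wedge: it is bounded by two smooth hypersurfaces, $\Gz$ and $\pc$, that meet transversally or at some angle. Set $\Omega_\varepsilon \coloneqq \Omega \setminus \big(\overline{B_\varepsilon(O)} \cup \overline{T_\varepsilon}\big)$, where $T_\varepsilon$ is the $\varepsilon$-tubular neighbourhood of $\partial\Gz$. Rather than working with sharp cut-offs, I would use a smooth cutoff $\psi_\varepsilon$ that vanishes near $O$ and near $\partial\Gz$ and equals $1$ outside the $2\varepsilon$-neighbourhoods. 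Since $\Gz$ and $\pc\setminus\{O\}$ are smooth hypersurfaces away from $\partial\Gz$ and $O$, the field $\psi_\varepsilon F$ is $C^1$ up to the boundary of a piecewise-smooth domain whose boundary pieces are relatively open in $\Gz$ and in $\pc$. The standard Gauss--Green formula, after a partition of unity and local flattening, then gives
\begin{equation*}
\int_\Omega \divergence(\psi_\varepsilon F)\,dx = \int_\Gz \psi_\varepsilon F\cdot\nu\,d\sigma + \int_{\G\setminus\{O\}} \psi_\varepsilon F\cdot\nu\,d\sigma .
\end{equation*}

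The next step is to pass to the limit $\varepsilon\to0$. On the left we write $\divergence(\psi_\varepsilon F)=\psi_\varepsilon\divergence F+\nabla\psi_\varepsilon\cdot F$. The first term converges to $\int_\Omega\divergence F$ by dominated convergence, since $\divergence F\in L^1$. For the error term, take $|\nabla\psi_\varepsilon|\le C/\varepsilon$, supported in a set $A_\varepsilon$ of measure $O(\varepsilon^N)$ near $O$ and $O(\varepsilon^2)$ near the $(N-2)$-manifold $\partial\Gz$. Cauchy--Schwarz then gives
\begin{equation*}
\Big|\int \nabla\psi_\varepsilon\cdot F\Big| \le \frac{C}{\varepsilon}\,|A_\varepsilon|^{1/2}\,\|F\|_{L^2(A_\varepsilon)} .
\end{equation*}
Near the edge this is $C\|F\|_{L^2(A_\varepsilon)}\to0$ by absolute continuity of the integral. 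Near the vertex it is $C\varepsilon^{N/2-1}\|F\|_{L^2}\to0$ because $N\ge3$. This is exactly where the hypotheses $F\in L^2$ and $N\ge3$ enter.

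On the right-hand side, convergence of the boundary integrals requires $F\cdot\nu\in L^1$ on $\Gz$ and on $\G$; otherwise the formula is read as an improper limit. Near $\partial\Gz$, $F$ is continuous up to the edge, since $\partial\Gz\subset\pc\setminus\{O\}$ and $F\in C^1$ on $\Gz\cup\G$ away from $O$. Hence these boundary terms cause no trouble locally, and dominated convergence applies on compact pieces away from $O$.

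The main obstacle is the vertex contribution to the boundary integral over $\G$, where $F$ may blow up. I would handle it by monotone or dominated convergence, assuming (as is implicit in the statement) that the surface integrals exist. Alternatively, I would note that their limits exist because every other term in the identity converges, so the formula holds with the boundary integral over $\G\setminus\{O\}$ understood as $\lim_{\varepsilon\to0}\int_{\G\setminus B_\varepsilon}$.
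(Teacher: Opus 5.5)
The paper does not prove this lemma; it quotes it from \cite{PacellaTralli2020}. Your argument therefore has to stand on its own.

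Cutting off around $\{O\}\cup\partial\Gz$ is the natural strategy, and your treatment of the interior error term is correct. Since $|\nabla\psi_\varepsilon|\le C/\varepsilon$ on sets of measure $O(\varepsilon^2)$ (resp.\ $O(\varepsilon^N)$), $\|\nabla\psi_\varepsilon\|_{L^2}$ stays bounded. Absolute continuity of $\int|F|^2$ then gives $\int F\cdot\nabla\psi_\varepsilon\,dx\to0$. The same reasoning works at the vertex for every $N\ge2$, so $N\ge3$ plays no role there.

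You tacitly use that $\Omega$ is bounded: the edge is compact and no cut-off at infinity is needed. This should be stated, because it must be part of the notion of a sector-like domain. For instance, $\Omega=\cone\setminus\overline{B_1(O)}$ and $F=x/|x|^N$ satisfy all the stated hypotheses when $N\ge3$, yet the left-hand side is $0$ and the right-hand side is $-\haus(D)$.

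The step that fails is your claim that $F$ is continuous up to $\partial\Gz$. Since $\cone$ is open, $\Gz=\po\cap\cone$ does not meet $\pc$. Moreover, $\G=(\po\cap\pc)\setminus\partial\Gz$ removes the edge explicitly. So the hypothesis $F\in C^1((\Omega\cup\Gz\cup\G)\setminus\{O\})$ says nothing about $F$ on $\partial\Gz$. This is deliberate: in applications $F$ is built from $\nabla u$ for a mixed Dirichlet--Neumann problem, whose gradient may blow up where $\Gz$ meets $\pc$. That is exactly why the lemma assumes $F\in L^2(\Omega)$, which your interior estimate near the edge already uses instead of a pointwise bound.

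Hence the boundary integrals near the edge are in the same position as those near the vertex, and your argument for them is missing. The fix is to treat both singular sets the same way:
\begin{itemize}
\item assume $F\cdot\nu\in L^1(\Gz)$ and $F\cdot\nu\in L^1(\G\setminus\{O\})$, which is needed anyway for the right-hand side to be a Lebesgue integral;
\item apply dominated convergence to both boundary terms at once, using $0\le\psi_\varepsilon\le1$ and $\psi_\varepsilon\to1$ pointwise on $\Gz\cup(\G\setminus\{O\})$.
\end{itemize}

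Your fallback ``improper limit'' reading is not justified as written. Your identity only shows that the weighted sum $\int_{\Gz}\psi_\varepsilon F\cdot\nu\,d\sigma+\int_{\G}\psi_\varepsilon F\cdot\nu\,d\sigma$ converges. That gives neither convergence of each term separately nor convergence of the sharp truncations $\int_{\G\setminus B_\varepsilon(O)}F\cdot\nu\,d\sigma$.
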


\subsection{Capacity and quasi-open sets}
\begin{definition}
    The capacity of a set $E \subseteq \R^N$ is defined as
    \begin{equation*}
        \capacity(E) \coloneqq \inf\{\|v\|^2_{H^1(\R^N)} \ : \ v \geq 1 \text{ in a neighbourhood of } E\}.
    \end{equation*}
\end{definition}

\begin{definition}
    A set $U \subset \R^N$ is said to be quasi-open if for every $\varepsilon > 0$ there exists an open set $V_\varepsilon$ such that $U \cup V_\varepsilon$ is open and $\capacity(V_\varepsilon) \leq \varepsilon$.
\end{definition}

For any quasi-open set $\Omega \subset \cone$, we define the Sobolev space
\begin{equation*}
    \sobspcap \coloneqq \{u \in H^1(\cone) \ : \ u = 0 \text{ q.e. in } \cone \setminus \Omega\}.
\end{equation*}
Here, q.e. stands for quasi-everywhere, that is, up to sets of zero capacity. It can be shown that this space is the classical Sobolev space $H_0^1(\Omega \cup \Gamma_\Omega)$ if $\Omega$ is open. We refer to \cite{HenrotPierre2018} for more details on these questions.

\subsection{Eigenvalue problems with mixed boundary conditions}

For a bounded domain $\Omega \subset \cone$ with a smooth relative boundary $\Gz$, we consider the following eigenvalue problem with mixed Dirichlet-Neumann boundary conditions, also known as Zaremba problem (\cite{LevitinMangoubiPolterovich2024published})
\begin{equation}
    \label{eq:cone_eigenvalue_problem}
    \left\{
    \begin{array}{rcll}
        - \Delta u & = & \lambda u & \quad \text{ in } \Omega \\
        u & = & 0 & \quad \text{ on } \Gz \\
        \displaystyle \mfrac{\partial u}{\partial \nu} & = & 0 & \quad \text{ on } \G \setminus \{O\}
    \end{array}
    \right.
\end{equation}
It can be shown that this problem has a variational formulation in the space $\sobspace$, which is the subspace of $H^1(\Omega)$ of functions whose trace vanishes on $\Gz$. Moreover, it can be shown that \eqref{eq:cone_eigenvalue_problem} inherits many features of its ``pure Dirichlet" counterpart; see \cite[Section 1.4]{DamascelliPacella2019}. In particular, under our assumptions,
\begin{enumerate}[label=(\roman*)]
    \item there exists a sequence of eigenvalues $\{\lambda_j\}_{j \in \mathbb{N}}$, counted with multiplicity, such that $\lambda_j \to + \infty$ as $j \to \infty$, and the corresponding eigenfunctions form an orthonormal basis of the space $L^2(\Omega)$;

    \item the eigenvalues admit a min-max characterization. In particular, 
    \begin{equation*}
        \lambda_1(\Omega) = \inf_{\substack{v \in \sobspace \\ v \neq 0}} \frac{\displaystyle \int_\Omega |\nabla v|^2 \ dx}{\displaystyle \int_\Omega v^2 \ dx } = \inf_{\substack{v \in \sobspace \\ \|v\|_2 = 1}} \int_\Omega |\nabla v|^2 \ dx > 0;
    \end{equation*}

    \item the first eigenvalue is simple and the corresponding eigenfunction does not change sign;

    \item the eigenvalues are non-increasing with respect to inclusion: if $\Omega' \subset \Omega$, then $\lambda_j(\Omega') \geq \lambda_j(\Omega)$.
\end{enumerate}

These notions can be generalized from the class of bounded domains to the class of quasi-open subsets of $\cone$ with finite measure. In fact, such an extension is crucial for the minimization argument carried out in Section \ref{sec:cone_existence}, since quasi-open sets are, roughly speaking, level sets of Sobolev functions and our minimizer will be precisely the support of a positive function $u \in H^1(\cone)$, which is not known to be open \textit{a priori}. 

For a quasi-open set $\Omega \subset \cone$ of finite measure, the eigenvalue problem \eqref{eq:cone_eigenvalue_problem} is to be interpreted in the variational form: a solution is a pair $(u, \lambda)$, with $u \in \sobspcap$, such that
\begin{equation*}
    \int_\cone \nabla u \nabla v \ dx = \lambda \int_\cone u v \ dx \quad \forall v \in \sobspcap.
\end{equation*}

Since the domain $D$ that spans the cone is smooth and connected, $\cone$ is smooth except at the vertex (where, obviously, it satisfies the cone condition). Then it can be shown that for any $\Omega$ of finite measure, the inclusion $\sobspcap \hookrightarrow L^2(\cone)$ is compact, and one is then able to develop a spectral theory akin to the case of bounded open sets; see \cite{ButtazzoVelichkov2016, Velichkov2015, HenrotPierre2018} for the general theory of partial differential equations in the class of quasi-open sets. In particular, for every quasi-open set $\Omega \subset \cone$ of finite measure, we consider
\begin{equation*}
    \lambda_1(\Omega) = \inf_{\substack{v \in \sobspcap \\ v \neq 0}} \frac{\displaystyle \int_\Omega |\nabla v|^2 \ dx}{\displaystyle \int_\Omega v^2 \ dx } = \inf_{\substack{v \in \sobspcap \\ \|v\|_2 = 1}} \int_\Omega |\nabla v|^2 \ dx > 0.
\end{equation*}
We denote the corresponding nonnegative $L^2$-normalized eigenfunction by $u_\Omega$.

\subsection{Torsional energy}
We now recall some facts from \cite{IacopettiPacellaWeth2022} regarding the relative torsional energy problem for domains in $\cone$. If $\Omega$ is an open set with smooth relative boundary $\Gz$, then the so-called torsion function of $\Omega$ is the solution $w_\Omega \in H_0^1(\Omega \cup \G)$ for
\begin{equation*}
    \left\{
    \begin{array}{rcll}
        - \Delta w_\Omega & = & 1 & \quad \text{ in } \Omega \\
        w_\Omega & = & 0 & \quad \text{ on } \Gz \\
        \mfrac{\partial w_\Omega}{\partial \nu} & = & 0 & \quad \text{ on } \G
    \end{array}
    \right.
    .
\end{equation*}
It can be shown that $w_\Omega$ is the unique minimizer of the functional
\begin{equation*}
    J(v) = \frac{1}{2} \int_\Omega |\nabla v|^2 \ dx - \int_\Omega v \ dx, \quad v \in \sobspace,
\end{equation*}
which thus allows us to define the shape functional
\begin{equation*}
    E(\Omega) = J(w_\Omega).
\end{equation*}

As for the eigenvalue problem, we can weaken the requirements and consider the torsion function $w_\Omega$ of a quasi-open set $\Omega \subset \cone$ by passing to the functional space $\sobspcap$.

It can also be shown (see \cite{Bucur2012, Velichkov2015}) that the function
\begin{equation}
    \label{eq:cone_def_d_gamma}
    d_\gamma(\Omega, \omega) \coloneqq \int_{\cone} |w_\Omega - w_\omega| \ dx, \quad \Omega, \omega \subset \cone \text{ quasi-open},
\end{equation}
is a distance on the class of quasi-open sets of $\cone$ of finite measure with the topology of $\gamma$-convergence (see \cite{HenrotPierre2018, ButtazzoVelichkov2016}).

\section{Stability analysis of the spherical sector}
\label{sec:cone_stability}

In this section, we follow the ideas of \cite{IacopettiPacellaWeth2022, AfonsoIacopettiPacella2024Energypublished}. Thus we focus on domains which are strictly star-shaped with respect to the vertex of the cone, which we assume to be the origin $O$ of $\R^N$. Such domains can be parametrized by functions in $\ctd$ as follows. For $\varphi \in \ctd$, we set
\begin{equation}
    \label{eq:cone_def_Omega_varphi}
    \Omega_\varphi \coloneqq \{x \in \cone \ : \ x = r q, \ 0 < r < e^{\varphi(q)}, q \in D\}.
\end{equation}
For simplicity of notation, we set
\begin{equation*}
    \Gzp \coloneqq \Gamma_{0, \Omega_\varphi}, \quad \Gp \coloneqq \Gamma_{\Omega_\varphi}.
\end{equation*}
In this way, we can consider the functional $\lambda_1: \ctd \to \R$ given by
\begin{equation*}
    \lambda_1(\varphi) = \lambda_1(\Omega_\varphi), \quad \varphi \in \ctd.
\end{equation*}
To $\lambda_1(\varphi)$ corresponds a positive $L^2$-normalized eigenfunction $u_\varphi$, which is a classical solution to
\begin{equation*}
    \left\{
    \begin{array}{rcll}
        - \Delta u_\varphi & = & \lambda_1(\varphi) u_\varphi & \quad \text{ in } \Omega_\varphi \\
        u_\varphi & = & 0 & \quad \text{ on } \Gzp \\
        \mfrac{\partial u_\varphi}{\partial \nu} & = & 0 & \quad \text{ on } \Gp \setminus \{O\}.
    \end{array}
    \right.
\end{equation*}

\begin{remark}
    Note that by passing through the exponential function instead of considering the graph of the function $\varphi$ as the relative boundary of $\Omega_\varphi$ directly, we have all the space $\ctd$ available as possible variations and do not have to worry about $\varphi$ being too negative. This will be useful for our aim of computing first- and second-order derivatives of $\lambda_1(\varphi)$.
\end{remark}

Let us now describe the domain variations that we consider. For $\eta \in \ctd$ and $t \in (- \delta, \delta)$, with $\delta > 0$ fixed, we consider perturbed domains of the type 
\begin{equation*}
    \Omega_{\varphi + t\eta} \subset \cone, \ t \in (- \delta, \delta)
\end{equation*}
We note that such variations can be described by a one-parameter family of diffeomorphisms as follows. Let 
\begin{equation*}
    \xi: (t, x) \in (- \delta, \delta) \times \cone \mapsto e^{t \eta\left(\frac{x}{|x|} \right)} x \in \cone
\end{equation*}
It is easily verified that
\begin{equation*}
    \xi|_{\Omega_\varphi}(t, \cdot): \Omega_\varphi \to \Omega_{\varphi + t\eta}
\end{equation*}
is a diffeomorphism, whose inverse map $\left(\xi|_{\Omega_{\varphi}} \right)^{-1}: \Omega_{\varphi + t\eta} \to \Omega_\varphi$ is given by
\begin{equation*}
    \left(\xi|_{\Omega_\varphi} \right)^{-1}(x) = e^{- t\eta\left(\frac{x}{|x|} \right)}x = \xi(-t, x).
\end{equation*}
In addition, it immediately follows from the definition that
\begin{equation*}
    \xi(t, x) \in \pc \setminus \{O\} \quad \forall (t, x) \in (- \delta, \delta) \times (pc \setminus \{O\}).
\end{equation*}
Then $\xi$ is the flow corresponding to the vector field $V: \cone \to \R^N$ given by
\begin{equation}
    \label{eq:cone_vector_field}
    V_\eta(x) = \eta\left(\frac{x}{|x|} \right)x.
\end{equation}
Indeed: $\xi(0, x) = x$ for all $x \in \cone$, 
\begin{equation*}
    \frac{d \xi}{dt}(t, x) = e^{t \eta\left(\frac{x}{|x|} \right)} \eta\left( \frac{x}{|x|}\right)x = V_\eta(\xi(t, x)),
\end{equation*}
and $\{\Omega_{\varphi + t\eta}\}_{t \in (-\delta, \delta)}$ is a family of deformations of $\Omega_\varphi$ corresponding to the vector field $V_\eta$, i.e. to the one-parameter family of diffeomorphisms 
\begin{equation*}
\xi_t(\cdot) \coloneqq \xi(t, \cdot).
\end{equation*}

\begin{remark}
    \label{rem:cone_normal_dsigma}
    The outer unit normal vector to $\Gzp$ is given by
    \begin{equation*}
        \nu(x) = \frac{\displaystyle \frac{x}{|x|} - \nabla_{\sphere} \varphi\left(\frac{x}{|x|}\right)}{\displaystyle \sqrt{1 + \left|\nabla_\sphere \varphi\left(\frac{x}{|x|} \right)\right|^2}}.
    \end{equation*} 
    Moreover, the area element in $\Gzp$ is given by
    \begin{equation*}
        d \sigma_{\Gzp} = e^{(N - 1) \varphi} \sqrt{1 + |\nabla_\sphere \varphi|^2} d\sigma,
    \end{equation*}
    where $d\sigma$ is the area element on $D$; see \cite{IacopettiPacellaWeth2022}.
\end{remark}

\subsection{Shape derivatives of $\lambda_1$}

Our first main result of the section is the computation of the first derivative of $\lambda_1(\varphi)$ with respect to a variation $\eta \in \ctd$.

\begin{prop}
    \label{prop:cone_first_derivative}
    Let $\varphi \in \ctd$. For any $\eta \in \ctd$, there exists $\delta > 0$ such that the map
    \begin{equation*}
        t \in (- \delta, \delta) \mapsto (u_{\varphi + t\eta} \circ \xi_t, \lambda_1(\varphi + t\eta)) \in L^2(\cone) \times \R
    \end{equation*}
    is of class $C^\infty$ in $(- \delta, \delta)$ and $\wtu_\eta = \left.\mfrac{d}{dt} (u_{\varphi + t\eta}) \right|_{t = 0} \in H^1(\Omega_\varphi)$ is the unique solution of
    \begin{equation}
        \label{eq:cone_pde_u_prime_general}
        \left\{
        \begin{array}{rcll}
            - \Delta \wtu_\eta & = & \lambda_1(\varphi) \wtu_\eta + \lambda' u_\varphi & \quad \text{ in } \Omega_\varphi \\
            \wtu_\eta & = & \displaystyle - \mfrac{\partial u_\varphi}{\partial \nu} (V_\eta \cdot \nu) & \quad \text{ on } \Gzp \\
            \displaystyle \mfrac{\partial \wtu_\eta}{\partial \nu} & = & 0 & \quad \text{ on } \Gp \setminus \{O\} \\
            \displaystyle \int_{\Omega_\varphi} \wtu_\eta u_\varphi \ dx & = & 0 &
        \end{array}
        \right.
        ,
    \end{equation}    
    where $\lambda' \coloneqq \left. \frac{d \lambda_t}{dt}\right|_{t = 0}$ is given by
    \begin{equation}
        \label{eq:cone_lambda_prime}
        \lambda'(\varphi)[\eta] = \left. \frac{d \lambda_t}{dt}\right|_{t = 0} = - \int_D e^{N\varphi(q)} \eta(q) \left(\frac{\partial u_\varphi}{\partial \nu}(e^{\varphi(q)}q)\right)^2 \ d\sigma.
    \end{equation}
\end{prop}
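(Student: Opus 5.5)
We pull everything back to the fixed domain $\Omega_\varphi$ through the diffeomorphisms $\xi_t$ and apply the implicit function theorem. Set $v_t \coloneqq u_{\varphi + t\eta}\circ \xi_t \in H_0^1(\Omega_\varphi \cup \Gp)$. Changing variables in the weak formulation, $v_t$ satisfies
\begin{equation*}
    \int_{\Omega_\varphi} A_t \nabla v_t \cdot \nabla \psi \ dx = \lambda_1(\varphi + t\eta)\int_{\Omega_\varphi} v_t \psi\, J_t \ dx \quad \forall \psi \in H_0^1(\Omega_\varphi \cup \Gp),
\end{equation*}
together with $\int_{\Omega_\varphi} v_t^2 J_t\,dx = 1$. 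Here $J_t = \det D\xi_t$ and $A_t = J_t (D\xi_t)^{-1}(D\xi_t)^{-T}$. Since $\xi_t(x) = e^{t\eta(x/|x|)}x$ is $0$-homogeneous in its angular dependence, $A_t$ and $J_t$ are bounded, depend analytically on $t$ with values in $L^\infty(\cone)$, and are uniformly elliptic near $t=0$. The homogeneity also keeps the singularity at $O$ harmless. Because $\xi_t$ preserves $\pc\setminus\{O\}$, the test space is the same for every $t$.

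Next define $F:(-\delta,\delta)\times H_0^1(\Omega_\varphi\cup\Gp)\times\R \to H^{-1}\times\R$ by
\begin{equation*}
    F(t,v,\lambda) = \Bigl(-\divergence(A_t\nabla v) - \lambda J_t v,\ \int_{\Omega_\varphi} v^2 J_t \,dx - 1\Bigr),
\end{equation*}
understood in the weak sense. This map is $C^\infty$. At $(0,u_\varphi,\lambda_1(\varphi))$ its partial differential in $(v,\lambda)$ is
\begin{equation*}
    (w,\mu) \mapsto \Bigl(-\Delta w - \lambda_1 w - \mu u_\varphi,\ 2\int_{\Omega_\varphi} u_\varphi w \,dx\Bigr).
\end{equation*}
This differential is an isomorphism, by the simplicity of $\lambda_1$ and the Fredholm alternative in the mixed space, using the compactness of $H_0^1(\Omega_\varphi \cup \Gp)\hookrightarrow L^2$. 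The implicit function theorem then gives a smooth branch $(v_t,\lambda_t)$. This branch is the eigenpair: positivity of $u_\varphi$ persists for small $t$ by $H^1$-closeness, and a positive normalized eigenfunction is the first one. This proves the $C^\infty$ claim. I expect this to be the main technical step, specifically verifying that the Fredholm and regularity theory near the vertex and near $\partial\Gzp$ holds in the mixed setting. For this we rely on the facts recalled from \cite{DamascelliPacella2019} and on the uniform Lipschitz assumption on $\cone$.

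To identify $\wtu_\eta$, write $\wtu_\eta = \dot v_0 - \nabla u_\varphi\cdot V_\eta$, the shape (Eulerian) derivative. Differentiating the equation in $\Omega_\varphi$ gives $-\Delta\wtu_\eta = \lambda_1\wtu_\eta + \lambda' u_\varphi$. Differentiating $u_{\varphi+t\eta}(\xi_t(x)) = 0$ on $\Gzp$ gives $\wtu_\eta = -\nabla u_\varphi\cdot V_\eta = -\frac{\partial u_\varphi}{\partial\nu}(V_\eta\cdot\nu)$, since $\nabla u_\varphi$ is normal there. On $\Gp$, the field $V_\eta$ is tangent to $\pc$ and the Neumann condition is preserved, so $\partial_\nu\wtu_\eta = 0$. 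This also uses $\partial_\nu(\nabla u_\varphi \cdot V_\eta)=0$ on the flat-in-radial-direction lateral boundary, which follows from the cone structure as in \cite{IacopettiPacellaWeth2022}. Differentiating the normalization, and using that the boundary term vanishes because $u_\varphi=0$ on $\Gzp$, yields $\int \wtu_\eta u_\varphi = 0$. Uniqueness follows again from the Fredholm alternative together with the orthogonality condition.

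For $\lambda'$, multiply the equation for $\wtu_\eta$ by $u_\varphi$ and apply Lemma \ref{lemma:cone_divergence_theorem_sector_like_domains} twice (Green's identity). The Neumann terms on $\Gp$ vanish, and so does $u_\varphi$ on $\Gzp$. What remains is
\begin{equation*}
    \lambda' = \int_{\Gzp} \wtu_\eta \frac{\partial u_\varphi}{\partial\nu}\,d\sigma = -\int_{\Gzp}\Bigl(\frac{\partial u_\varphi}{\partial\nu}\Bigr)^2 V_\eta\cdot\nu\,d\sigma.
\end{equation*}
Finally, by Remark \ref{rem:cone_normal_dsigma}, at $x = e^{\varphi(q)}q$ we have $V_\eta\cdot\nu = \eta(q)e^{\varphi(q)}/\sqrt{1+|\nabla_{\sphere}\varphi|^2}$. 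Multiplying by $d\sigma_{\Gzp}$, the square roots cancel and we obtain $e^{N\varphi}\eta\,d\sigma$, which is formula \eqref{eq:cone_lambda_prime}.
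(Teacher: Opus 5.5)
Your overall route is the paper's: transport by $\xi_t$, the implicit function theorem for $\F$ with the same linearization, $\wtu_\eta=\dot v_0-\nabla\uphi\cdot V_\eta$, and Green's identity for $\lambda'$. However, your justification of $\partial_\nu\wtu_\eta=0$ on $\Gp\setminus\{O\}$ is wrong.

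For a general $\eta\in\ctd$ the identity $\partial_\nu(\nabla\uphi\cdot V_\eta)=0$ on the lateral boundary is false. Take $\nu$ to be the $0$-homogeneous normal to $\pc$. Then $DV_\eta\,\nu=\eta\,\nu+\frac{\partial\eta}{\partial\nu_{\partial D}}\frac{x}{|x|}$, and $\nu\cdot D^2\uphi\,x=x\cdot\nabla(\partial_\nu\uphi)=0$ on $\pc$. Hence
\[
\partial_\nu(\nabla\uphi\cdot V_\eta)=\nu\cdot D^2\uphi\,V_\eta+\nabla\uphi\cdot DV_\eta\,\nu=\frac{\partial\eta}{\partial\nu_{\partial D}}\,\partial_r\uphi .
\]
This is nonzero as soon as $\partial\eta/\partial\nu_{\partial D}\neq0$ (for instance at $\OD$, where $\partial_r u_D=u_D'<0$). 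The statement imposes no Neumann condition on $\eta$.

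Symmetrically, the transported function does not keep the Neumann condition. From $D\xi_t\,\nu=e^{t\eta}\bigl(\nu+t\frac{\partial\eta}{\partial\nu_{\partial D}}\frac{x}{|x|}\bigr)$ you get
\[
\partial_\nu v_t=t\,e^{t\eta}\frac{\partial\eta}{\partial\nu_{\partial D}}\,(\partial_r u_{\varphi+t\eta})\circ\xi_t ,
\]
so $\partial_\nu\dot v_0=\frac{\partial\eta}{\partial\nu_{\partial D}}\partial_r\uphi\neq0$. The two errors cancel, so $\partial_\nu\wtu_\eta=0$ is true, but not for the reasons you give.

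The correct argument, which is the paper's, differentiates $\nabla u_{\varphi+t\eta}(\xi_t(x))\cdot\nu(\xi_t(x))=0$. Since $\xi_t(x)$ stays on the ray through $x$, $\nu(\xi_t(x))=\nu(x)$. The only extra term is $\nu\cdot D^2\uphi\,V_\eta=\eta\,x\cdot\nabla(\partial_\nu\uphi)=0$. This, not $\partial_\nu(\nabla\uphi\cdot V_\eta)=0$, is the genuine cone-structure identity. Equivalently, differentiate $\partial_\nu u_{\varphi+t\eta}=0$ directly on the fixed lateral boundary. The point matters because $\partial_\nu\wtu_\eta=0$ is exactly what removes the $\Gp$ term in your Green identity for $\lambda'$.

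Two smaller points.
\begin{enumerate}
\item $H^1$-closeness of $v_t$ to $\uphi$ does not imply $v_t>0$. To identify the implicit-function branch with the first eigenpair, you have two options. One is local uniqueness in the implicit function theorem together with continuity of $t\mapsto(u_{\varphi+t\eta}\circ\xi_t,\lambda_1(\varphi+t\eta))$, as the paper does. The other is to note that the second eigenvalue of the transported problem stays bounded away from $\lambda_1(\varphi)$ because $A_t\to I$ and $J_t\to 1$ in $L^\infty$; then fix the sign via $\int v_t\uphi>0$.
\item When differentiating the normalization, the boundary term vanishes on $\Gzp$ because $\uphi=0$ there, and on $\Gp$ because $V_\eta\cdot\nu=0$. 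You only mention the former.
\end{enumerate}
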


\begin{proof}
    Let $\delta > 0$ be a positive number to be chosen later. For every $t \in (- \delta, \delta)$ sufficiently small, the eigenvalue $\lambda_t \coloneqq \lambda_1(\varphi + t\eta)$ and the corresponding eigenfunction $u_t \coloneqq u_{\Omega_{\varphi + t\eta}}$ are well-defined in the domain $\Omega_t \coloneqq \Omega_{\varphi + t\eta}$. Moreover, the map
    \begin{equation*}
        t \in (- \delta, \delta) \mapsto (u_t, \lambda_t) \in H_0^1(\Omega_t \cup \Gamma_t) \times \R
    \end{equation*}
    where for brevity we have set $\Gamma_t \coloneqq \Gamma_{\varphi + t\eta}$, is continuous (\cite[Chapter 4]{HenrotPierre2018}). The variational formulation of the eigenvalue problem in $\Omega_t$ is
    \begin{align}
        & u_t \in H_0^1(\Omega_t \cup \Gamma_t), \nonumber \\
        & \int_{\Omega_t} u_t^2\ dx = 1, \nonumber \\
        & \int_{\Omega_t} \nabla u_t \nabla v \ dx = \lambda_t \int_{\Omega_t} u_t v \ dx \quad \forall v \in H_0^1(\Omega_t \cup \Gamma_t). \nonumber
    \end{align}

    As usual in the theory of shape derivatives, the idea is to transport this equation to the fixed domain $\Omega_\varphi$ and then to apply the implicit function theorem (see \cite{HenrotPierre2018, SokolowskiZolesio1992}). This can be done by virtue of the fact that
    \begin{equation*}
        H_0^1(\Omega_\varphi \cup \Gp) = \{v \circ \xi_t \ : \ v \in H_0^1(\Omega_t \cup \Gamma_t)\}. 
    \end{equation*}
    Thus, under the change of variables given by $\xi_t$, the transported function
    \begin{equation*}
        \hu_t \coloneqq u_t \circ \xi_t
    \end{equation*}
    satisfies
    \begin{align}
        & \hu_t \in H_0^1(\Omega_\varphi \cup \Gp), \nonumber \\
        & \int_{\Omega_\varphi} \hu_t^2 J_t \ dx = 1, \nonumber \\ 
        & \int_{\Omega_\varphi} (A_t \nabla \hu_t) \nabla w \ dx = \lambda_t \int_{\Omega_\varphi} \hu_t w J_t \ dx \quad \forall w \in H_0^1(\Omega_\varphi \cup \Gp), \nonumber
    \end{align}
    where, denoting by $D\xi_t$ the Jacobian matrix of $\xi_t$,  
    \begin{equation*}
        J_t = \det D\xi_t,
    \end{equation*}
    which is positive for $t$ small because $\xi_t$ is close to the identity, and 
    \begin{equation*}
        A_t = J_t(D\xi_t^{-1}) (D \xi_t^{-1})^T.
    \end{equation*}
    Note that $J_t$ and $A_t$ are in $L^\infty$, even if $\xi_t$ and $V$ are not differentiable at the vertex of the cone.

    Next, we define the operator
    \begin{equation*}
        \F : (- \delta, \delta) \times H_0^1(\Omega_\varphi \cup \Gp) \times \R \to H^{-1}(\Omega_\varphi \cup \Gp) \times \R
    \end{equation*}
    where $H^{-1}(\Omega_\varphi \cup \Gp)$ denotes the dual space of $H_0^1(\Omega_\varphi \cup \Gp)$, by
    \begin{equation*}
        \F(t, v, \lambda) = \left(- \divergence(A_t \nabla v) - \lambda v J_t, \int_{\Omega_\varphi} v^2 J_t \ dx - 1\right).
    \end{equation*}
    It is a standard fact from the theory of shape derivatives that the map $\F$ is of class $C^\infty$ (see, e.g., \cite[Page 207]{HenrotPierre2018}).
    
    Note that 
    \begin{equation*}
        \F(t, \hu_t, \lambda_t) = 0 \quad \forall t \in (-\delta, \delta).
    \end{equation*}
    We aim to apply the implicit function theorem to obtain the differentiability of $\hu_t$, and hence of $u_t$, at $t = 0$. To this end, let us note that
    \begin{equation*}
        \partial_{v, \lambda}\F(0, u_\varphi, \lambda_1(\varphi)) [v, \lambda] = \left(- \Delta v - \lambda u_\varphi - \lambda_1(\varphi) v, 2 \int_\Op u_\varphi v \ dx \right), \quad (v, \lambda) \in H_0^1(\Omega_\varphi \cup \Gp) \times \R.
    \end{equation*}
    It is also standard to show that 
    \begin{equation*}
        \partial_{v, \lambda}\F(0, u_\varphi, \lambda_1(\varphi)) : H_0^1(\Omega_\varphi \cup \Gp) \times \R \to H^{-1}(\Omega_\varphi \cup \Gp) \times \R
    \end{equation*}
    is an isomorphism; the proof goes as in \cite[Lemma 5.7.3]{HenrotPierre2018}, with small modifications regarding the functional spaces (see also \cite{LamboleySicbaldi2014}). Hence, by the implicit function theorem, up to taking a smaller $\delta > 0$, it holds that the map
    \begin{equation*}
        t \in (- \delta, \delta) \mapsto (\hu_t, \lambda_t)
    \end{equation*}
    is of class $C^\infty$. In addition, since $u_t = \hu_t \circ \xi_t^{-1}$, then small modifications of \cite[Lemma 5.3.3] {HenrotPierre2018} yield that 
    \begin{equation*} 
    t \in (- \delta, \delta) \mapsto u_t \in L^2(\cone)
    \end{equation*}
    is differentiable at $t = 0$ and
    \begin{equation}
        \label{eq:cone_u_prime}
        \wtu_\eta = \left.\mfrac{d}{dt}\hu_t\right|_{t = 0} - \nabla u_\varphi \cdot V_\eta \in H^1(\Omega_\varphi)
    \end{equation}
    where $\wtu_\eta = \left.\mfrac{d}{dt}\right|_{t = 0} u_t$ and $V_\eta$ is the vector field corresponding to the deformations $\xi_t$ (see \eqref{eq:cone_vector_field}).

    Let us now characterize $\wtu_\eta$ as the solution of a differential problem. Since $\nabla u_\varphi = \mfrac{\partial u_\varphi}{\partial \nu} \nu$ on $\Gzp$ and $\left.\mfrac{d}{dt}\hu_t\right|_{t = 0} \in H_0^1(\Omega_\varphi \cup \Gp)$, then
    \begin{equation*}
        \wtu_\eta = - \frac{\partial u_\varphi}{\partial \nu} (V \cdot \nu) \quad \text{ on } \Gzp.
    \end{equation*}
    Next, arguing as in the proof of \cite[Theorem 5.3.1]{HenrotPierre2018}, namely taking $v \in C_c^\infty(\Omega_\varphi)$, integrating by parts and applying Hadamard's formula (\cite[Section 5.2]{HenrotPierre2018}), it follows that
    \begin{equation*}
        \int_{\Omega_\varphi} (- \Delta \wtu_\eta - \lambda_1(\varphi)\wtu_\eta - \lambda' u_\varphi) v \ dx = 0. 
    \end{equation*}
    Since $v \in C_c^\infty(\Omega_\varphi)$ is arbitrary, we conclude that $\wtu_\eta$ satisfies, in the weak sense (and therefore, by standard elliptic regularity theory, also in the classical sense)
    \begin{equation*}
        - \Delta \wtu_\eta = \lambda_1(\varphi) \wtu_\eta + \lambda' u_\varphi \quad \text{ in } \Omega_\varphi.
    \end{equation*}
    Also, by Hadamard's formula, differentiating the relation
    \begin{equation*}
        \int_{\Omega_t} u_t^2 \ dx = 1 \quad \forall t \in (- \delta, \delta)
    \end{equation*}
    at $t = 0$ yields
    \begin{equation*}
        2 \int_{\Omega_\varphi} u_\varphi \wtu_\eta \ dx + \int_{\Gp} u_\varphi^2 (V \cdot \nu) \ d\sigma = 0.
    \end{equation*}
    Since $V \perp \nu$ on $\pc$, then we obtain the orthogonality constraint
    \begin{equation*}
        \int_{\Omega_\varphi} u_\varphi \wtu_\eta \ dx = 0.
    \end{equation*}
    Finally, differentiating the relation 
    \begin{equation*}
        \nabla u_t(\xi_t(x)) \cdot \nu_{\xi_t(x)} = 0 \quad \forall t \in (- \delta, \delta),
    \end{equation*}
    which holds because $V$ is tangent to $\pc$ on $\pc$, and arguing as in \cite[Section 5.5]{HenrotPierre2018}, we obtain
    \begin{equation*}
        \frac{\partial \wtu_\eta}{\partial \nu} = 0 \quad \text{ on } \Gp \setminus \{O\}.
    \end{equation*}
    Summarizing, $\wtu_\eta \in H^1(\Omega_\varphi)$ satisfies \eqref{eq:cone_pde_u_prime_general}.

    The differential problem \eqref{eq:cone_pde_u_prime_general} allows us to obtain a general expression for $\lambda'$ as follows. Multiplying the differential equation by $u_\varphi$, taking into account that $\int_{\Omega_\varphi} \wtu_\eta u_\varphi \ dx = 0$, and integrating by parts (making use of Lemma \ref{lemma:cone_divergence_theorem_sector_like_domains}), we obtain:
    \begin{align}
        \lambda' \underbrace{\int_\Op u_\varphi^2 \ dx}_{= 1}
        & = \int_\Op \nabla \wtu_\eta \nabla u_\varphi \ dx - \underbrace{\int_\Op \lambda_1(\varphi) \wtu_\eta u_\varphi \ dx}_{= 0} \nonumber \\
        & = \underbrace{\int_\Op \lambda_1(\varphi) u_\varphi \wtu_\eta \ dx}_{= 0} + \int_{\Gzp} \wtu_\eta \frac{\partial u_\varphi}{\partial \nu} \ d\sigma \nonumber \\
        & = - \int_{\Gzp} \left(\frac{\partial u_\varphi}{\partial \nu} \right)^2 (V \cdot \nu) \ d\sigma. \nonumber 
    \end{align}
    Formula \eqref{eq:cone_lambda_prime} then follows immediately, in view of Remark \ref{rem:cone_normal_dsigma} and since
    \begin{equation*}
        V \cdot \nu = \frac{|x|}{\displaystyle \sqrt{1 + \left|\nabla_\sphere \varphi \left(\frac{x}{|x|} \right)\right|^2}} \eta\left(\frac{x}{|x|} \right) \quad \text{ on } \Gzp.
    \end{equation*}
    The proof is complete.
\end{proof}

We now provide a general formula for the second variation of the functional $\varphi \mapsto \lambda_1(\varphi)$. To this aim, we make the further assumption that the domain $\Omega_\varphi$ touches $\pc$ orthogonally, in such a way that $u$ has the required regularity near $\partial \Gzp$.

\begin{prop}
    \label{prop:cone_second_derivative}
    Let $\varphi \in \ctd$ be such that $\mfrac{\partial \varphi}{\partial \nu_{\partial D}} = 0$ on $\partial D$. For any $\eta, \zeta \in \ctd$, it holds that
    \begin{align}
        \lambda''(\varphi)[\eta, \zeta]
        & \coloneqq \left.\frac{d}{ds}\left(\lambda'(\varphi + s\zeta)[\eta] \right)\right|_{s = 0} \nonumber \\
        & = - N \int_D e^{N \varphi} \eta \zeta \left(\frac{\partial \uphi}{\partial \nu}(e^\varphi q) \right)^2 \ d\sigma \nonumber \\
        & \quad -2 \int_D e^{N \varphi} \eta \frac{\partial \uphi}{\partial \nu}(e^\varphi q) \frac{\partial \wtu_\zeta}{\partial \nu}(e^\varphi q) \ d\sigma \nonumber \\
        & \quad - 2 \int_D e^{N \varphi} \eta \zeta \frac{\partial \uphi}{\partial \nu}(e^\varphi q) [D^2\uphi(e^\varphi q) (e^\varphi q)] \cdot \nu \ d\sigma \nonumber \\
        & \quad + 2 \int_D e^{N \varphi} \eta \frac{\partial \uphi}{\partial \nu}(e^\varphi q) \frac{\nabla \uphi(e^\varphi q) \nabla_\sphere \zeta}{\sqrt{1 + |\nabla_\sphere\varphi|^2}} \ d\sigma \nonumber \\
        & \quad + 2 \int_D e^{N \varphi} \eta \left(\frac{\partial \uphi}{\partial \nu}(e^\varphi q) \right)^2 \frac{\nabla_\sphere \varphi \nabla_\sphere \zeta}{(1 + |\nabla_\sphere \varphi|^2)} \ d\sigma, \label{eq:cone_lambda_primeprime}
    \end{align}
    where 
    \begin{equation*}
        \wtu_\zeta = \left.\frac{d}{ds}(u_{\varphi + s\zeta}) \right|_{s = 0}
    \end{equation*}
    is the solution of \eqref{eq:cone_pde_u_prime_general} with 
    \begin{equation*}
        V_\zeta(x) = \zeta\left(\frac{x}{|x|} \right)x.
    \end{equation*}
\end{prop}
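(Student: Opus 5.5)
We differentiate the explicit formula \eqref{eq:cone_lambda_prime} of Proposition \ref{prop:cone_first_derivative}, with $\varphi$ replaced by $\varphi + s\zeta$, with respect to $s$ at $s = 0$. Write
\begin{equation*}
\lambda'(\varphi + s\zeta)[\eta] = - \int_D e^{N(\varphi + s\zeta)} \eta \, g_s(q)^2 \, d\sigma, \qquad g_s(q) \coloneqq \nabla u_s(e^{\varphi + s\zeta}q) \cdot \nu_s(q),
\end{equation*}
where $u_s = u_{\varphi + s\zeta}$ and $\nu_s$ is the outer normal of Remark \ref{rem:cone_normal_dsigma} with $\varphi + s\zeta$ in place of $\varphi$. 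The integrand has three $s$-dependent factors, and the product rule splits the derivative accordingly. The exponential factor gives
\[
-N \int_D e^{N\varphi} \eta\zeta \Bigl(\frac{\partial u_\varphi}{\partial \nu}\Bigr)^2 \, d\sigma .
\]
The factor $g_s^2$ gives $-2\int_D e^{N\varphi}\eta\, g_0\, \partial_s g_s|_{s=0}\, d\sigma$. So the remaining work is to compute $\partial_s g_s|_{s=0}$.

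\textbf{Step 1: the chain rule for $g_s$.} We expand $\partial_s g_s|_{s=0}$ into three contributions.
\begin{enumerate}[label=(\alph*)]
\item Varying the function gives $\nabla \wtu_\zeta(e^\varphi q)\cdot \nu$, i.e. $\partial \wtu_\zeta/\partial\nu$. This yields the second term of \eqref{eq:cone_lambda_primeprime}.
\item Varying the evaluation point, using $\frac{d}{ds} e^{\varphi+s\zeta}q = \zeta e^\varphi q$, gives $\zeta\,[D^2 u_\varphi(e^\varphi q)(e^\varphi q)]\cdot\nu$. This yields the third term.
\item Varying the normal gives $\nabla u_\varphi \cdot \partial_s\nu_s$.
\end{enumerate}
From the explicit formula for $\nu_s$,
\[
\partial_s \nu_s|_{s=0} = -\frac{\nabla_\sphere\zeta}{\sqrt{1+|\nabla_\sphere\varphi|^2}} - \frac{\nabla_\sphere\varphi\cdot\nabla_\sphere\zeta}{1+|\nabla_\sphere\varphi|^2}\,\nu .
\]
Dotting with $\nabla u_\varphi$ and using $\nabla u_\varphi\cdot\nu = \partial_\nu u_\varphi$, then multiplying by $-2e^{N\varphi}\eta\,\partial_\nu u_\varphi$, produces the fourth and fifth terms with the stated signs. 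Here $\nabla_\sphere\zeta$ is understood as a tangent vector at $q$ and is paired with $\nabla u_\varphi(e^\varphi q)$ in $\R^N$.

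\textbf{Step 2: justification, which is the main obstacle.} The formal computation needs three things:
\begin{enumerate}[label=(\roman*)]
\item differentiability in $s$ of $s\mapsto \nabla u_s(e^{\varphi+s\zeta}q)$, uniformly in $q\in\overline D$;
\item the identity $\partial_s[\nabla u_s(e^{\varphi+s\zeta}q)] = \nabla\wtu_\zeta + \zeta D^2u_\varphi(e^\varphi q)\,e^\varphi q$;
\item enough regularity of $u_\varphi$ and $\wtu_\zeta$ up to $\Gzp$, including near $\partial\Gzp\subset\pc$, for traces of $D^2u_\varphi$ and $\nabla\wtu_\zeta$ to make sense.
\end{enumerate}

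For (i) and (ii), we upgrade the implicit function argument of Proposition \ref{prop:cone_first_derivative} from $H^1$ to a Hölder setting. We transport via $\xi_s$ and work with $\hu_s$ in $C^{2,\alpha}(\overline{\Omega_\varphi}\setminus B_\rho(O))$. This is possible because the vertex is far from $\Gzp$, and the Schauder theory for the mixed problem is available near the junction $\partial\Gzp$ thanks to the hypothesis $\partial_{\nu_{\partial D}}\varphi=0$. That hypothesis means $\Gzp$ meets $\pc$ orthogonally, so reflecting across $\pc$ locally turns the mixed problem into a Dirichlet problem with smooth data. This gives $\hu_s\in C^{2,\alpha}$ near $\Gzp$, with smooth dependence on $s$.

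We then write $\nabla u_s(\xi_s(x)) = (D\xi_s)^{-T}(x)\nabla\hu_s(x)$ and differentiate at $s=0$. Using $\dot\hu = \wtu_\zeta + \nabla u_\varphi\cdot V_\zeta$ from \eqref{eq:cone_u_prime}, the terms recombine into $\nabla\wtu_\zeta + D^2u_\varphi V_\zeta$. Evaluating at $x = e^\varphi q$, where $V_\zeta = \zeta e^\varphi q$, gives exactly (ii).

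Finally, since the integrals over $D$ involve only continuous functions of $(s,q)$ that are $C^1$ in $s$, differentiation under the integral sign is legitimate. This completes the argument.
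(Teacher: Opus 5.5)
Your proposal is correct and follows the paper's proof. You differentiate \eqref{eq:cone_lambda_prime} at $\varphi+s\zeta$ under the integral sign, take the $-N$ term from the exponential factor, and split the $s$-derivative of the normal derivative into the $\partial_\nu\wtu_\zeta$, Hessian and normal-variation contributions, using the same formula for $\partial_s\nu_s$. The one difference is in justification: the paper cites \cite[Lemma 3.2]{IacopettiPacellaWeth2022} for the commutation $\frac{d}{ds}\nabla u_{\varphi+s\zeta}=\nabla\frac{d}{ds}u_{\varphi+s\zeta}$ and the regularity near $\partial\Gzp$, whereas you derive the commutation correctly from $\nabla u_s\circ\xi_s=(D\xi_s)^{-T}\nabla\hu_s$ and \eqref{eq:cone_u_prime}.
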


\begin{proof}
    By definition and by Proposition \ref{prop:cone_first_derivative}, we have that
    \begin{equation*}
        \lambda''(\varphi)[\eta, \zeta] = \left. \frac{d}{ds} \left(- \int_D e^{N(\varphi + s\zeta)} \eta \left(\frac{\partial u_{\varphi + s\zeta}}{\partial \nu}(e^{\varphi + s\zeta} q) \right)^2 \ d\sigma \right) \right|_{s = 0}.
    \end{equation*}
    By the Leibniz rule for differentiation under the integral, we obtain
    \begin{align}
        \lambda''(\varphi)[\eta, \zeta]
        & = - \int_D e^{N \varphi} N \eta \zeta \left(\frac{\partial \uphi}{\partial \nu} (e^\varphi q) \right)^2 \ d\sigma \nonumber \\
        & \quad - 2 \int_D e^{N \varphi} \eta \frac{\partial \uphi}{\partial \nu}(e^\varphi q) \left. \frac{d}{ds}\left(\frac{\partial u_{\varphi + s\zeta}}{\partial \nu} (e^{\varphi + s\zeta} q) \right) \right|_{s = 0} \ d\sigma. \label{eq:cone_second_variation_a}
    \end{align}

    Now, as shown in the proof of \cite[Lemma 3.2]{IacopettiPacellaWeth2022}, we have that
    \begin{equation*}
        \frac{d}{ds} \left(\nabla u_{\varphi + s\zeta} \right) = \nabla \left(\frac{d}{ds} u_{\varphi + s\zeta} \right),
    \end{equation*}
    and therefore
    \begin{align}
        & \left.\frac{d}{ds} \left(\frac{\partial u_{\varphi + s\zeta}}{\partial \nu}(e^{\varphi + s\zeta}q) \right)\right|_{s = 0} \nonumber \\
        & = \left(\nabla \wtu_\zeta(e^\varphi q) + D^2 \uphi (e^\varphi q) e^\varphi \zeta q \right) \cdot \frac{q - \nabla_\sphere \varphi}{\sqrt{1 + |\nabla_\sphere \varphi|^2}} \nonumber \\
        & \quad + \nabla \uphi(e^\varphi q) \cdot \left(- \frac{\nabla_\sphere \zeta}{\sqrt{1 + |\nabla_\sphere \varphi|^2}} - \frac{(q - \nabla_\sphere \varphi)(\nabla_\sphere\varphi \nabla_\sphere\zeta)}{(1 + |\nabla_\sphere \varphi|^2)^{\frac{3}{2}}} \right) \nonumber \\
        & = \left(\nabla \wtu_\zeta(e^\varphi q) + D^2 \uphi(e^\varphi q) e^\varphi \zeta q \right) \cdot \nu \nonumber \\
        & \quad + \nabla \uphi(e^\varphi q) \cdot \left(- \frac{\nabla_\sphere \zeta}{\sqrt{1 + |\nabla_\sphere \varphi|^2}} - \frac{\nabla_\sphere\varphi \nabla_\sphere\zeta}{(1 + |\nabla_\sphere \varphi|^2)} \nu \right) \nonumber \\
        & = \frac{\partial \wtu_\zeta}{\partial \nu}(e^\varphi q) + \zeta [D^2 \uphi (e^\varphi q) e^\varphi q] \cdot \nu - \frac{\nabla \uphi (e^\varphi q) \nabla_\sphere \zeta}{\sqrt{1 + |\nabla_\sphere \varphi|^2}} - \frac{\partial \uphi}{\partial \nu} (e^\varphi q) \frac{\nabla_\sphere\varphi \nabla_\sphere\zeta}{(1 + |\nabla_\sphere \varphi|^2)}. \label{eq:cone_second_variation_b}
    \end{align}

    Combining \eqref{eq:cone_second_variation_a} and \eqref{eq:cone_second_variation_b} we obtain \eqref{eq:cone_lambda_primeprime}.
\end{proof}

\subsection{Volume constrained critical points for $\lambda_1$}

Let $\varphi \in \ctd$ and consider the corresponding domain $\Op$. Its volume (Lebesgue measure) is given by
\begin{equation*}
    \vol(\Op) = |\Op| = \frac{1}{N} \int_D e^{N \varphi} \ d\sigma.
\end{equation*}
We can then define the volume functional
\begin{equation*}
    \vol: \varphi \in \ctd \mapsto \vol(\Op) \in \R.
\end{equation*}
It can easily be shown that $\vol$ is of class $C^2$ in $\ctd$ and for any $\eta, \zeta \in \ctd$ it holds
\begin{align}
    & \vol'(\varphi)[\eta] = \int_D e^{N\varphi} \eta \ d\sigma, \label{eq:cone_vol_prime} \\
    & \vol''(\varphi)[\eta, \zeta] = N \int_D e^{N \varphi} \eta \zeta \ d \sigma. \label{eq:cone_vol_primeprime}
\end{align}

For $m > 0$, we define
\begin{equation*}
    \M \coloneqq \{\varphi \in \ctd \ : \ \vol(\varphi) = m \},
\end{equation*}
which is a smooth manifold in $\ctd$, whose tangent space at a given point $\varphi \in \M$ is given by
\begin{equation*}
    T_\varphi\M = \left\{\eta \in \ctd \ : \ \int_D e^{N \varphi} \eta \ d\sigma = 0 \right\}.
\end{equation*}

For our analysis, it is natural to restrict the functional $\lambda_1(\varphi)$ to $\M$. 

By the theorem of Lagrange multipliers, if $\varphi \in \M$ is a critical point of the constrained functional $\lambda_1|_\M$, then there exists a Lagrange multiplier $\Lambda \in \R$ such that
\begin{equation}
    \label{eq:cone_Lagrange_multiplier_a}
    \lambda_1'(\varphi) = \Lambda \vol'(\varphi).
\end{equation}
As a consequence, we obtain the following result:
\begin{lemma}
    \label{lem:cone_overdet_critical_point}
    Let $\varphi \in \M$ be a critical point for $\lambda_1|_\M$. Then the Lagrange multiplier $\Lambda$ is negative and 
    \begin{equation*}
        \frac{\partial \uphi}{\partial \nu} = - \sqrt{- \Lambda} \quad \text{ on } \Gzp. 
    \end{equation*}
\end{lemma}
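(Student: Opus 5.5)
We write out the Lagrange multiplier condition \eqref{eq:cone_Lagrange_multiplier_a} using the explicit formulas \eqref{eq:cone_lambda_prime} and \eqref{eq:cone_vol_prime}. For every $\eta \in \ctd$ this gives
\begin{equation*}
    - \int_D e^{N\varphi(q)} \eta(q) \left(\frac{\partial \uphi}{\partial \nu}(e^{\varphi(q)}q)\right)^2 d\sigma = \Lambda \int_D e^{N\varphi(q)} \eta(q) \ d\sigma ,
\end{equation*}
that is,
\begin{equation*}
    \int_D e^{N\varphi}\, \eta \left[ \left(\frac{\partial \uphi}{\partial \nu}(e^{\varphi}q)\right)^2 + \Lambda \right] d\sigma = 0 \quad \text{for all } \eta \in \ctd.
\end{equation*}
The bracket times $e^{N\varphi}$ is a continuous function on $D$, since $\uphi$ is a classical solution up to $\Gzp$. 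Because $\ctd$ contains $C_c^\infty(D)$, which is dense in $L^2(D)$, the fundamental lemma of the calculus of variations shows that this function vanishes identically on $D$. As $e^{N\varphi}>0$, we obtain
\begin{equation*}
    \left(\frac{\partial \uphi}{\partial \nu}\right)^2 = -\Lambda \quad \text{on } \Gzp .
\end{equation*}
Hence $\Lambda \le 0$ and $|\partial_\nu \uphi| = \sqrt{-\Lambda}$.

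Next we determine the sign of the normal derivative. Since $\uphi>0$ in $\Op$ and $\uphi=0$ on $\Gzp$, we have $\partial_\nu \uphi \le 0$ on $\Gzp$, because $\nu$ is the outer normal. Combined with $|\partial_\nu \uphi| = \sqrt{-\Lambda}$, this gives $\partial_\nu \uphi = -\sqrt{-\Lambda}$ on $\Gzp$.

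The main obstacle is the strict inequality $\Lambda<0$. We rule out $\Lambda=0$ as follows. If $\Lambda=0$, then $\partial_\nu\uphi\equiv 0$ on all of $\Gzp$. Two routes are available.

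\emph{Route 1 (Hopf's lemma).} Since $-\Delta \uphi = \lambda_1 \uphi \ge 0$ and $\uphi>0$ inside, Hopf's lemma at any interior point of the smooth surface $\Gzp$ gives $\partial_\nu \uphi<0$ there. Such points exist because $D$ is open; these are points of $\Gzp$ away from $\pc$, where the interior ball condition holds because the boundary is $C^{2,\alpha}$.

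\emph{Route 2 (integral identity).} Alternatively, we use the divergence theorem, Lemma \ref{lemma:cone_divergence_theorem_sector_like_domains}, together with $\partial_\nu \uphi=0$ on $\Gp$. This gives
\begin{equation*}
    \int_{\Gzp} \partial_\nu \uphi \ d\sigma = \int_\Op \Delta \uphi \ dx = -\lambda_1(\varphi)\int_\Op \uphi \ dx < 0,
\end{equation*}
which contradicts $\partial_\nu \uphi \equiv 0$ on $\Gzp$.

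Either argument forces $\Lambda<0$, and the statement follows.
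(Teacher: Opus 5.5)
Your proof is correct and follows the paper's argument. You insert \eqref{eq:cone_lambda_prime} and \eqref{eq:cone_vol_prime} into the multiplier identity, use the arbitrariness of $\eta$ to obtain $(\partial_\nu \uphi)^2 = -\Lambda$ on $\Gzp$, and then apply Hopf's lemma (your Route 1) to get $\partial_\nu \uphi < 0$, which gives both the sign and $\Lambda<0$. Your Route 2, via Lemma~\ref{lemma:cone_divergence_theorem_sector_like_domains}, is a valid alternative for excluding $\Lambda = 0$, since once $|\partial_\nu \uphi|$ is known to be constant it suffices that $\partial_\nu \uphi$ does not vanish identically.
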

\begin{proof}
    Starting from \eqref{eq:cone_Lagrange_multiplier_a} and applying Proposition \ref{prop:cone_first_derivative}, we have that
    \begin{equation*}
        \int_D e^{N \varphi} \eta \left(\left(\frac{\partial \uphi}{\partial \nu}(e^\varphi q) \right)^2 + \Lambda \right) d\sigma = 0
    \end{equation*}
    for any $\eta \in \ctd$. Since $e^{N \varphi} > 0$ and $\eta$ is arbitrary, it immediately follows that $\Lambda \leq 0$ and
    \begin{equation*}
        \left(\frac{\partial \uphi}{\partial \nu} \right)^2 = - \Lambda \quad \text{ on } \Gzp.
    \end{equation*}
    By standard elliptic regularity for eigenfunctions we obtain that $\uphi$ is smooth (at least of class $C^2$) up to $\Gzp$, and since $\uphi > 0$ in $\Op$, by Hopf's lemma we have that $\mfrac{\partial \uphi}{\partial \nu} < 0$ on $\Gzp$. The proof is complete.
\end{proof}

\begin{remark}
    It follows at once from Lemma \ref{lem:cone_overdet_critical_point} that critical points of $\lambda_1|_\M$ are domains where the overdetermined problem \eqref{eq:cone_overdet_pde} admits a solution. Moreover, it recovers (with an easier proof\footnote{See, e.g., \cite[Proposition 2.6]{AfonsoIacopettiPacella2024Energypublished}.}) in the class of star-shaped domains, a standard result about (smooth) critical shapes for the first (more generally, simple) eigenvalues of the mixed Dirichlet-Neumann Laplacian under a volume constraint. 
\end{remark}

It can also be shown that the Lagrange multiplier allows us to compute the second derivative of $\lambda_1|_\M$ for variations on the tangent space. Precisely, we have the following result:
\begin{lemma}
    \label{lem:cone_Lagrange_multiplier_second_derivative}
    Let $\varphi \in \M$ be a critical point for $\lambda_1|_\M$ and let $\eta, \zeta \in T_\varphi\M$. Then
    \begin{equation*}
        \lambda_1|_\M''(\varphi)[\eta, \zeta] = \lambda_1''(\varphi)[\eta, \zeta] - \Lambda \vol''(\varphi)[\eta, \zeta].
    \end{equation*}
\end{lemma}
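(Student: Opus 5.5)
The plan is the standard Lagrange-multiplier computation of the constrained Hessian, carried out through curves that stay on $\M$. Fix $\eta,\zeta \in T_\varphi\M$. The first step is to build a two-parameter family on $\M$ tangent to these directions. Since $\vol'(\varphi)[1] = \int_D e^{N\varphi}\,d\sigma > 0$, the implicit function theorem applied to $(s,t,\tau) \mapsto \vol(\varphi + s\eta + t\zeta + \tau) - m$ (with constant functions $\tau \in \R$) gives a $C^2$ function $\tau(s,t)$ with $\tau(0,0)=0$ and
\begin{equation*}
\gamma(s,t) \coloneqq \varphi + s\eta + t\zeta + \tau(s,t) \in \M .
\end{equation*}
Differentiating the constraint once, and using $\eta,\zeta \in T_\varphi\M$, yields $\partial_s\tau(0,0) = \partial_t\tau(0,0) = 0$. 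Hence $\partial_s\gamma = \eta$ and $\partial_t\gamma = \zeta$ at the origin, so $\lambda_1|_\M''(\varphi)[\eta,\zeta]$ is identified with $\partial_s\partial_t\, \lambda_1(\gamma(s,t))|_{(0,0)}$.

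Next apply the chain rule, using that $\lambda_1$ is $C^2$ on $\ctd$ (Propositions \ref{prop:cone_first_derivative} and \ref{prop:cone_second_derivative}):
\begin{equation*}
\partial_s\partial_t\, \lambda_1(\gamma)\big|_{0} = \lambda_1''(\varphi)[\eta,\zeta] + \lambda_1'(\varphi)\big[\partial_s\partial_t\gamma(0,0)\big],
\end{equation*}
where $\partial_s\partial_t\gamma(0,0) = \partial_s\partial_t\tau(0,0)$ is a constant function. By \eqref{eq:cone_Lagrange_multiplier_a}, the last term equals $\Lambda\, \vol'(\varphi)[\partial_s\partial_t\tau]$.

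To evaluate this, differentiate the identity $\vol(\gamma(s,t)) = m$ twice in the same way:
\begin{equation*}
0 = \vol''(\varphi)[\eta,\zeta] + \vol'(\varphi)[\partial_s\partial_t\tau],
\end{equation*}
so $\Lambda\, \vol'(\varphi)[\partial_s\partial_t\tau] = -\Lambda\, \vol''(\varphi)[\eta,\zeta]$. Substituting this gives the claimed formula.

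The only delicate point is regularity. We need $\lambda_1$ to be $C^2$ as a map on $\ctd$, not merely along straight lines, so that the chain rule applies to the curved family $\gamma$. Proposition \ref{prop:cone_first_derivative} and the implicit-function setup give smooth dependence in finitely many parameters: one applies the same argument with the diffeomorphism $x \mapsto e^{(s\eta + t\zeta + \tau)(x/|x|)}x$ for two parameters. The composition with $\tau(s,t)$ is then legitimate. The extra hypothesis $\partial_{\nu_{\partial D}}\varphi = 0$ of Proposition \ref{prop:cone_second_derivative} is not needed here, since only the abstract second derivative is used, not its explicit formula \eqref{eq:cone_lambda_primeprime}.
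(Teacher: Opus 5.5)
Your proposal is correct and follows essentially the same route as the paper, which simply defers to the standard constrained-Hessian computation of \cite[Lemma 4.3]{IacopettiPacellaWeth2022}. That computation differentiates $\lambda_1$ and the volume constraint twice along a family in $\M$ with prescribed tangent directions, then eliminates the second-order correction using $\lambda_1'(\varphi) = \Lambda \vol'(\varphi)$. Your remark that joint smoothness in the finitely many parameters $(s,t,\tau)$, obtained by the same implicit-function argument as in Proposition \ref{prop:cone_first_derivative}, is enough to justify the chain rule is also sound.
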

\begin{proof}
    The proof is the same as in \cite[Lemma 4.3]{IacopettiPacellaWeth2022}.
\end{proof}

The sign of $\lambda_1|_\M''(\varphi)$ at a critical point $\varphi \in \M$ determines the stability of $\Omega_\varphi$ as a critical domain for $\Omega \mapsto \lambda_1(\Omega)$ in the class of polar graphs.

\subsection{Stability of the spherical sector}

We will now focus on the spherical sector
\begin{equation*}
    \Omega_D \coloneqq \cone \cap B_1(O),
\end{equation*}
which corresponds, in the notation of \eqref{eq:cone_def_Omega_varphi}, to $\Omega_0$, that is, $\varphi \equiv 0$ on $D = \Gamma_{0, 0}$. We set the volume constraint
\begin{equation*}
    m = |\Omega_D|.
\end{equation*}
In this case, in view of \eqref{eq:cone_vol_prime}, the tangent space to $\M$ at $\varphi \equiv 0$ is
\begin{equation*}
    T_0\M = \{\eta \in \ctd \ : \ \int_D \eta \ d\sigma = 0\}.
\end{equation*}
For simplicity of notation, we also set $\Gamma_D = \pc \cap B_1(O)$. 

In this case, the first eigenfunction $u_D \coloneqq u_{\Omega_D}$ is simply the restriction of an appropriate normalization of $u_{B_1(O)}$, the first Dirichlet eigenfunction in $B_1(O)$ (which is well-known to be radial), to the cone $\cone$.

Since $u_D$ is radial, it is a solution of \eqref{eq:cone_overdet_pde} in $\Omega_D$, which therefore is a critical shape for $\lambda_1|_\M$ (with the volume constraint $m = |\Omega_D|$). Our aim is to understand conditions under which $\Omega_D$ is not a minimizer, in which case we may search for more exotic, non-radial solutions for the overdetermined problem \eqref{eq:cone_overdet_pde} by minimizing the functional $\Omega \mapsto \lambda_1(\Omega)$.

Hereafter, we write $r = |x|$ and $v(r) = v(|x|)$ for any radial function $v$ in $\Omega_D$, and we denote with a prime the derivative with respect to the radial variable. 

Since $u_D$ is radial, $\nu = q$ on $D$, and in view of Lemma \ref{lem:cone_overdet_critical_point}, we have that
\begin{equation*}
    \left. \frac{\partial u_D}{\partial \nu}\right|_D = u_D'(1) = - \sqrt{- \Lambda_D},
\end{equation*}
where $\Lambda_D$ is the Lagrange multiplier in the case $\varphi \equiv 0$. Moreover, also
\begin{equation*}
    \frac{\partial^2 u_D}{\partial \nu^2} = (D^2u_D \nu \cdot \nu)|_D = u_D''(1)
\end{equation*}
is constant on $D$.

Recall that for $\eta \in T_0\M$, since $\Omega_D$ is a critical shape for $\lambda_1|_\M$, then $\lambda_1|_\M'(0) = 0$, and the corresponding derivative eigenfunction is the solution $\wtu_\eta \in H^1(\Omega_D)$ to
\begin{equation}
    \label{eq:cone_pde_u_eta_prime_spherical_sector}
    \left\{
    \begin{array}{rcll}
        - \Delta \wtu_\eta & = & \lambda_1(\OD) \wtu_\eta & \quad \text{ in } \OD \\
        \wtu_\eta & = & -u_D'(1) \eta & \quad \text{ on } D \\
        \displaystyle \mfrac{\partial \wtu_\eta}{\partial \nu} & = & 0 & \quad \text{ on } \Gamma_D \setminus \{O\} \\
        \displaystyle \int_\OD \wtu_\eta u_D \ dx & = & 0 & 
    \end{array}
    \right.
    .
\end{equation}

Our next result shows that the quadratic form for $\lambda_1|_\M''(0)$ has a simple expression.

\begin{lemma}
    \label{lem:cone_simplified_second_derivative}
    For any $\eta \in T_0\M$ it holds
    \begin{equation}
        \label{eq:cone_simplified_second_derivative}
        \lambda_1|_\M''(0)[\eta, \eta] = - 2 u_D'(1) \left(\int_D \eta \frac{\partial \wtu_\eta}{\partial \nu} \ d\sigma + u_D''(1) \int_D \eta^2 \ d\sigma \right).
    \end{equation}
\end{lemma}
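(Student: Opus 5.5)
The plan is to specialize Lemma \ref{lem:cone_Lagrange_multiplier_second_derivative} and Proposition \ref{prop:cone_second_derivative} to $\varphi \equiv 0$ with $\zeta = \eta \in T_0\M$, and then simplify term by term. The hypothesis of Proposition \ref{prop:cone_second_derivative} holds trivially, since $\varphi \equiv 0$ gives $\partial\varphi/\partial\nu_{\partial D} = 0$. By Lemma \ref{lem:cone_Lagrange_multiplier_second_derivative} and \eqref{eq:cone_vol_primeprime},
\begin{equation*}
    \lambda_1|_\M''(0)[\eta,\eta] = \lambda_1''(0)[\eta,\eta] - \Lambda_D N \int_D \eta^2 \, d\sigma .
\end{equation*}
By Lemma \ref{lem:cone_overdet_critical_point} we have $-\Lambda_D = (u_D'(1))^2$, so this last term equals $+N (u_D'(1))^2 \int_D \eta^2\,d\sigma$.

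Next I evaluate the five terms of \eqref{eq:cone_lambda_primeprime} at $\varphi = 0$. On $D$ we have $e^{N\varphi}=1$, $\nu = q$, and $\partial u_D/\partial\nu = u_D'(1)$.

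\begin{itemize}
    \item The first term is $-N (u_D'(1))^2 \int_D \eta^2\,d\sigma$. It cancels exactly with the Lagrange multiplier contribution above.
    \item The second term is $-2u_D'(1)\int_D \eta\, \partial\wtu_\eta/\partial\nu \, d\sigma$, which is the first summand in \eqref{eq:cone_simplified_second_derivative}.
    \item For the third term, $[D^2u_D(q)\, q]\cdot q = \partial^2 u_D/\partial\nu^2 = u_D''(1)$, because $u_D$ is radial and $\nu=q$ on $D$. This gives $-2u_D'(1)u_D''(1)\int_D \eta^2\,d\sigma$.
    \item The fourth term contains $\nabla u_D(q)\cdot\nabla_\sphere\eta$. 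Since $\nabla u_D(q) = u_D'(1) q$ is radial and $\nabla_\sphere \eta$ is tangent to the sphere, this product vanishes.
    \item The fifth term contains the factor $\nabla_\sphere\varphi = 0$, so it vanishes.
\end{itemize}

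Collecting the surviving terms gives exactly \eqref{eq:cone_simplified_second_derivative}.

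There is no real obstacle beyond bookkeeping. The one point to check carefully is that $\wtu_\eta$ appearing in \eqref{eq:cone_lambda_primeprime} is the solution of \eqref{eq:cone_pde_u_eta_prime_spherical_sector}. This holds because $\lambda'(0)[\eta] = -(u_D'(1))^2\int_D\eta\,d\sigma = 0$ for $\eta\in T_0\M$, so the term $\lambda' u_D$ drops from \eqref{eq:cone_pde_u_prime_general}. In addition, $V_\eta\cdot\nu = \eta$ on $D$, which gives the boundary datum $-u_D'(1)\eta$.

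It is also worth confirming that the sign convention for $\Lambda$ in Lemma \ref{lem:cone_Lagrange_multiplier_second_derivative} matches $\lambda_1' = \Lambda\vol'$. Indeed, $\lambda_1'(0)[\eta] = -(u_D'(1))^2\int_D\eta\,d\sigma$ and $\vol'(0)[\eta]=\int_D\eta\,d\sigma$, so $\Lambda_D = -(u_D'(1))^2$, consistent with the above.
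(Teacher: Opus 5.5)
Your proof is correct and follows the paper's argument. You specialize Proposition \ref{prop:cone_second_derivative} and Lemma \ref{lem:cone_Lagrange_multiplier_second_derivative} at $\varphi \equiv 0$, then cancel the first term against $-\Lambda_D \vol''(0)[\eta,\eta]$ using $\Lambda_D = -(u_D'(1))^2$. You also make explicit two points the paper leaves implicit: the fourth term of \eqref{eq:cone_lambda_primeprime} vanishes because the radial gradient is orthogonal to the tangential $\nabla_\sphere \eta$, and the fifth vanishes because $\nabla_\sphere \varphi = 0$.
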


\begin{proof}
    We start from Proposition \ref{prop:cone_second_derivative} and Lemma \ref{lem:cone_Lagrange_multiplier_second_derivative}. In view of \eqref{eq:cone_vol_primeprime}, we have:
    \begin{align}
        \lambda_1|_\M''(0)[\eta, \eta] 
        & = - N \int_D \eta^2 (u_D'(1))^2 \ d\sigma - 2 \int_D u_D'(1) \eta \frac{\partial \wtu_\eta}{\partial \nu} \ d\sigma \nonumber \\
        & \quad - 2 \int_D u_D'(1) u_D''(1) \eta^2 \ d\sigma - \Lambda_D N \int_D \eta^2 \ d\sigma. \label{eq:cone_proof_simplified_second_derivative_a}
    \end{align}
   Since $\Lambda_D = - (u_D'(1))^2$, the first and last terms cancel, yielding \eqref{eq:cone_simplified_second_derivative}.
\end{proof}

To understand the stability of $\Omega_D$ as a stationary domain for the functional $\varphi \in \M \mapsto \lambda_1|_\M(\varphi) \in \R$, it is a key point to understand the behaviour of the solution $\wtu_\eta$ to \eqref{eq:cone_pde_u_eta_prime_spherical_sector}, for any $\eta \in T_0\M$. As already pointed out in \cite{IacopettiPacellaWeth2022, AfonsoIacopettiPacella2024Energypublished}, it is enough for this purpose to consider the Neumann eigenfunctions $\psi \in \ctd$ of the Laplace-Beltrami operator $- \Delta_\sphere$ on $D$, namely, the solutions $(\psi, \mu)$ of the problem
\begin{equation}
    \label{eq:cone_Neumann_eigenvalue_problem_D}
    \left\{
    \begin{array}{rcll}
        - \Delta_\sphere \psi & = & \mu \psi & \quad \text{ on } D \\
        \displaystyle \mfrac{\partial \psi}{\partial \nu_{\partial D}} & = & 0 & \quad \text{ on } \partial D
    \end{array}
    \right.
    ,
\end{equation}
where $\nu_{\partial D}$ denotes the outer unit normal vector to $\partial D$ on $\sphere$. This problem admits a sequence of eigenvalues, which we denote (counting with multiplicity) by
\begin{equation*}
    0 = \mu_0 < \mu_1 \leq \mu_2 \leq \ldots \nearrow + \infty.
\end{equation*}
To each eigenvalue $\mu_j$, there corresponds an $L^2(D)$-normalized eigenfunction $\psi_j$, for $j \in \N \cup \{0\}$, and the family $\{\psi_j\}_{j \in \N \cup \{0\}}$ is an orthonormal basis for $L^2(D)$. In particular, $\psi_0$ is constant and, most importantly,
\begin{equation*}
    \int_D \psi_j \ d\sigma = 0 \quad \forall j \geq 1.
\end{equation*}
We refer to \cite{Chavel1984} for more details on this problem.

We will now show that for any $\psi_j$, the corresponding solution to \eqref{eq:cone_pde_u_eta_prime_spherical_sector}, which we denote by
\begin{equation*}
    \wtu_j \coloneqq \wtu_{\psi_j},
\end{equation*}
admits a suitable separation of variables in the polar coordinates $(r, q) \in [0, 1] \times \overline D$. The first step in this direction is the following result:

\begin{prop}
    \label{prop:cone_def_h_j}
    Let $j \geq 1$ and let $\wtu_j$ be the solution of \eqref{eq:cone_pde_u_eta_prime_spherical_sector} with $\eta = \psi_j$. Define
    \begin{equation*}
        h_j(r) = \int_D \wtu_j(r, q) \psi_j(q) \ d\sigma, \quad r \in (0, 1).
    \end{equation*}
    Then $h_j$ satisfies
    \begin{equation}
        \label{eq:cone_pde_hj}
        \left\{
        \begin{array}{rcll}
            - h_j'' - \dfrac{N - 1}{r} h_j' - \lambda_1(\OD) h_j & = & - \dfrac{\mu_j}{r^2} h_j & \quad \text{ in } (0, 1) \\
            h_j(1) & = & - u_D'(1) &
        \end{array}
        \right.
        .
    \end{equation}
\end{prop}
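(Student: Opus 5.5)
The plan is to project the equation in \eqref{eq:cone_pde_u_eta_prime_spherical_sector} onto the Neumann eigenfunction $\psi_j$ of $D$, using the polar form of the Laplacian:
\begin{equation*}
    \Delta v = \partial_r^2 v + \frac{N-1}{r}\partial_r v + \frac{1}{r^2}\Delta_\sphere v .
\end{equation*}
By Proposition \ref{prop:cone_first_derivative} and elliptic regularity, $\wtu_j$ is smooth in $\OD$ away from the vertex and up to $\Gamma_D\setminus\{O\}$. Hence $h_j\in C^2(0,1)$, and $h_j'$, $h_j''$ may be obtained by differentiating under the integral sign, since $\overline D$ is compact and on each shell $\{r\in[a,b]\}$, $0<a<b<1$, the derivatives of $\wtu_j$ are bounded.

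First, I would multiply $-\Delta\wtu_j=\lambda_1(\OD)\wtu_j$ (the right-hand side of \eqref{eq:cone_pde_u_eta_prime_spherical_sector} has no $\lambda'$ term, because $\lambda_1|_\M'(0)=0$ and $\psi_j\in T_0\M$) by $\psi_j(q)$. Integrating over $D$ for each fixed $r\in(0,1)$ gives
\begin{equation*}
    -h_j''-\frac{N-1}{r}h_j' - \frac{1}{r^2}\int_D \Delta_\sphere \wtu_j(r,\cdot)\,\psi_j\,d\sigma = \lambda_1(\OD) h_j .
\end{equation*}
The key step is the angular term. I apply Green's formula on $D\subset\sphere$ twice:
\begin{equation*}
    \int_D \Delta_\sphere \wtu_j\,\psi_j\,d\sigma = \int_D \wtu_j\,\Delta_\sphere\psi_j\,d\sigma + \int_{\partial D}\Big(\psi_j\frac{\partial\wtu_j}{\partial\nu_{\partial D}} - \wtu_j\frac{\partial\psi_j}{\partial\nu_{\partial D}}\Big)\,ds .
\end{equation*}
The second boundary term vanishes by the Neumann condition in \eqref{eq:cone_Neumann_eigenvalue_problem_D}. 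The first vanishes because, for $x=rq$ with $q\in\partial D$, the outer normal to $\pc$ is exactly $\nu_{\partial D}(q)$, and the radial derivative is tangent to $\pc$. So the condition $\partial\wtu_j/\partial\nu=0$ on $\Gamma_D\setminus\{O\}$ gives $r^{-1}\partial_{\nu_{\partial D}}\wtu_j(r,\cdot)=0$ on $\partial D$. Using $-\Delta_\sphere\psi_j=\mu_j\psi_j$, the angular term equals $-\mu_j h_j$. Moving it to the right gives exactly the ODE in \eqref{eq:cone_pde_hj}.

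For the boundary condition, I let $r\to1$ and use continuity of $\wtu_j$ up to $D$ (classical regularity near the smooth Dirichlet part, away from $\partial D$, plus dominated convergence with $\wtu_j\in H^1$ giving trace convergence in $L^2(D)$). This yields
\begin{equation*}
    h_j(1)=\int_D(-u_D'(1)\psi_j)\psi_j\,d\sigma=-u_D'(1),
\end{equation*}
by the $L^2(D)$-normalization of $\psi_j$.

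I expect the main obstacle to be regularity near the corner $\partial D$ (where Dirichlet and Neumann parts meet) and near the vertex $O$. These are needed to justify differentiating under the integral sign and the boundary integration by parts on $D$. I would handle this first by working in the weak sense: I take test functions $v(x)=\chi(r)\psi_j(q)$ with $\chi\in C_c^\infty(0,1)$, which lie in $H_0^1(\OD\cup\Gamma_D)$, and insert them into the weak form of \eqref{eq:cone_pde_u_eta_prime_spherical_sector}. Writing $|\nabla v|$ in polar coordinates, the weak form yields the ODE for $h_j$ in the distributional sense on $(0,1)$. This automatically encodes the Neumann condition on $\Gamma_D$ and avoids pointwise corner issues. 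Standard ODE regularity then upgrades $h_j$ to a classical solution.
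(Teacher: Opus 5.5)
Your proposal is correct and takes essentially the same route as the paper. You project $-\Delta\wtu_j=\lambda_1(\OD)\wtu_j$ onto $\psi_j$ using the polar form of the Laplacian, move $\Delta_\sphere$ onto $\psi_j$ by Green's formula on $D$ (giving the $-\mu_j h_j$ term), and read off $h_j(1)=-u_D'(1)$ from the Dirichlet datum and $\|\psi_j\|_{L^2(D)}=1$.

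Your additions supply rigor the paper leaves implicit: the check that both boundary terms on $\partial D$ vanish (via $\nu_{\pc}(rq)=\nu_{\partial D}(q)$), the restriction of differentiation under the integral to shells $a\le r\le b<1$, and the weak-form variant with test functions $\chi(r)\psi_j(q)$. For the limit $r\to1$ you do not need dominated convergence. The $L^2(D)$ convergence of $\wtu_j(r,\cdot)$ to its trace follows directly from $\wtu_j\in H^1(\OD)$ by integrating $\partial_r\wtu_j$ in $r$.
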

\begin{proof}
    The proof is the same as in \cite[Theorem 3.10]{AfonsoIacopettiPacella2024Energypublished}. We reproduce it here for completeness and the convenience of the reader.

    Since $\wtu_j = - u_D'(1) \psi_j$ on $D$, it immediately follows that
    \begin{equation*}
        h_j(1) = - u_D'(1).
    \end{equation*}
    Next, for any $r \in (0, 1]$, the Leibniz rule allows us to take the derivatives with respect to $r$ under the integral sign, so that
    \begin{align}
        - h_j''(r) - \frac{N - 1}{r} h_j'(r)
        & = \int_D \left(- \frac{\partial^2 \wtu_j}{\partial r^2} (r, q) - \frac{N - 1}{r}\frac{\partial \wtu_j}{\partial r}(r, q) \right) \psi_j(q) \ d\sigma \nonumber \\
        & = \int_D \left(- \Delta \wtu_j(r, q) + \frac{1}{r^2} \Delta_\sphere \wtu_j(r, q) \right) \psi_j(q) \ d\sigma \nonumber \\
        & = \int_D \lambda_1(\Omega_D) \wtu_j(r, q) \psi_j(q) \ d\sigma + \frac{1}{r^2} \int_D \Delta_\sphere \wtu_j(r, q) \psi_j(q) \ d\sigma \nonumber \\
        & = \lambda_1(\OD) h_j(r) + \frac{1}{r^2} \int_D \wtu_j (\Delta_\sphere \psi_j) \ d\sigma \nonumber \\
        & = \lambda_1(\OD)h_j(r) - \frac{\mu_j}{r^2} h_j. \nonumber
    \end{align}
    The proof is complete.
\end{proof}

\begin{cor}
    Let $j \geq 1$ and let $\wtu_j$ be the solution of \eqref{eq:cone_pde_u_eta_prime_spherical_sector} with $\eta = \psi_j$. Then
    \begin{equation*}
        \wtu_j(r, q) = h_j(r) \psi_j(q), 
    \end{equation*}
    where $h_j$ is the solution of \eqref{eq:cone_pde_hj}.
\end{cor}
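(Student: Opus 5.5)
The plan is to show that the separated function $v_j(r,q) \coloneqq h_j(r)\psi_j(q)$ solves problem \eqref{eq:cone_pde_u_eta_prime_spherical_sector} with $\eta = \psi_j$, and then to conclude $v_j = \wtu_j$ by uniqueness. Uniqueness is part of Proposition \ref{prop:cone_first_derivative}. It can also be seen directly: the difference of two solutions solves the homogeneous mixed problem at $\lambda_1(\OD)$ with zero Dirichlet data on $D$. By simplicity of $\lambda_1(\OD)$, that difference is a multiple of $u_D$, and the orthogonality constraint forces the multiple to be zero.

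To begin, I must specify which solution $h_j$ of \eqref{eq:cone_pde_hj} is meant, since \eqref{eq:cone_pde_hj} is a second-order ODE with only one boundary condition.

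Near $r=0$ the equation has indicial exponents $\alpha$ solving $\alpha(\alpha+N-2)=\mu_j$. The two roots are $\alpha_\pm = \frac{-(N-2)\pm\sqrt{(N-2)^2+4\mu_j}}{2}$, so $\alpha_+>0$ and $\alpha_-<-(N-2)$ (because $\mu_j>0$ for $j\ge1$).

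The function $h_j(r)=\int_D \wtu_j(r,\cdot)\psi_j\,d\sigma$ from Proposition \ref{prop:cone_def_h_j} inherits integrability from $\wtu_j\in H^1(\OD)$. By Cauchy--Schwarz and Fubini in polar coordinates, $\int_0^1 (h_j'^2 + h_j^2) r^{N-1}\,dr<\infty$. The singular branch $r^{\alpha_-}$ violates this, since $h_j'^2 r^{N-1}\sim r^{2\alpha_- +N-3}$ with $2\alpha_-+N-3<-N+1\le -1$. Hence $h_j$ is the unique regular solution, namely a multiple of $r^{1-N/2}J_{\nu}(\sqrt{\lambda_1}\,r)$ with $\nu=\sqrt{(N/2-1)^2+\mu_j}$, normalized by $h_j(1)=-u_D'(1)$. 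This normalization is possible only if that Bessel-type function does not vanish at $r=1$.

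I expect the non-vanishing at $r=1$ to be the main obstacle. I would argue it as follows. If the regular solution vanished at $1$, then $h\psi_j$ would be a nonzero function in $H_0^1(\OD\cup\Gamma_D)$ with Rayleigh quotient $\lambda_1(\OD)$, orthogonal to $u_D$ (since $\int_D\psi_j\,d\sigma=0$). By the min-max characterization it would then be a first eigenfunction, which contradicts simplicity. Equivalently, a Sturm comparison with the radial profile $u_D$, whose equation is the one with $\mu=0$, shows the first zero of the $\mu_j$-equation lies strictly beyond $r=1$.

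Next comes the verification for $v_j$. In polar coordinates, $\Delta = \partial_r^2+\frac{N-1}{r}\partial_r+\frac1{r^2}\Delta_\sphere$. Using $-\Delta_\sphere\psi_j=\mu_j\psi_j$ and \eqref{eq:cone_pde_hj}, this gives $-\Delta v_j=\lambda_1(\OD)v_j$ in $\OD$. On $D$ we have $v_j=h_j(1)\psi_j=-u_D'(1)\psi_j$.

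On $\Gamma_D\setminus\{O\}$ the normal $\nu$ is orthogonal to $x$, so $\partial_\nu v_j = \frac{h_j(r)}{r}\partial_{\nu_{\partial D}}\psi_j=0$ by the Neumann condition on $\psi_j$.

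For orthogonality, $u_D$ is radial, so $\int_\OD v_j u_D\,dx=\int_0^1 h_j u_D r^{N-1}dr\int_D\psi_j\,d\sigma=0$.

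Finally, $v_j\in H^1(\OD)$: the regular branch behaves like $r^{\alpha_+}$ near the vertex, so $|\nabla v_j|^2\lesssim r^{2\alpha_+-2}$, which is integrable against $r^{N-1}$. Uniqueness then gives $\wtu_j=h_j\psi_j$.
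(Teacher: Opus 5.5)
Your proposal is correct and follows the route the paper has in mind (its one-line proof "from the definitions" amounts to this): check that $h_j\psi_j$ solves \eqref{eq:cone_pde_u_eta_prime_spherical_sector} with $\eta=\psi_j$, then conclude $h_j\psi_j=\wtu_j$ by uniqueness; you also supply the details the paper omits, namely identifying $h_j$ as the regular branch and checking $h_j\psi_j\in H^1(\OD)$. The step you flagged as the main obstacle is in fact automatic: the projection $h_j$ is already a regular solution with $h_j(1)=-u_D'(1)\neq 0$, so the regular Bessel-type solution is a nonzero multiple of $h_j$ and cannot vanish at $r=1$.
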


\begin{proof}
    Follows immediately from the definitions (see also \cite[Remark 3.11]{AfonsoIacopettiPacella2024Energypublished}).
\end{proof}

We have the following result regarding the regularity of $h_j$:

\begin{lemma}
    \label{lem:cone_reg_h_j}
    For any $j \geq 1$, it holds that
    \begin{equation}
        \label{eq:cone_estimate_hj_a}
        \int_0^1 r^{N - 3} h_j^2 \ dr < + \infty
    \end{equation}
    and
    \begin{equation}
        \label{eq:cone_estimate_hj_b}
        \int_0^1 r^{N - 1} (h_j')^2 \ dr < + \infty.
    \end{equation}
    Moreover, $h_j \in L^\infty(0, 1)$ and $h_j(0) = 0$.
\end{lemma}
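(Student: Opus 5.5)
The two integral bounds come from the fact that $\wtu_j = h_j(r)\psi_j(q)$ lies in $H^1(\OD)$; this membership follows from Proposition \ref{prop:cone_first_derivative}, since $\wtu_j$ is the derivative of the eigenfunctions in $H^1$. The key step is to write the Dirichlet energy of $\wtu_j$ in polar coordinates $x = rq$, with $dx = r^{N-1}\,dr\,d\sigma$ and $|\nabla \wtu_j|^2 = (\partial_r \wtu_j)^2 + r^{-2}|\nabla_\sphere \wtu_j|^2$. Using the separated form from the Corollary together with $\|\psi_j\|_{L^2(D)} = 1$ and $\int_D |\nabla_\sphere \psi_j|^2\, d\sigma = \mu_j$, we obtain
\begin{equation*}
\int_\OD |\nabla \wtu_j|^2 \ dx = \int_0^1 r^{N-1}(h_j')^2 \ dr + \mu_j \int_0^1 r^{N-3} h_j^2 \ dr < +\infty .
\end{equation*}
Because $\mu_j > 0$ for $j \ge 1$, both terms are finite, which gives \eqref{eq:cone_estimate_hj_a} and \eqref{eq:cone_estimate_hj_b}.

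The ODE in \eqref{eq:cone_pde_hj} is a Bessel-type equation with a regular singular point at $r=0$. I would not rely on a heuristic separation argument here. To justify the separated form rigorously, one can define $h_j$ as an $L^2$ projection, as in Proposition \ref{prop:cone_def_h_j}; then Cauchy--Schwarz and Fubini give $\int_0^1 r^{N-1}(h_j')^2 \ dr \le \int_\OD |\partial_r \wtu_j|^2 \ dx$. The first bound then follows by the same computation after noting $\int_D \wtu_j \Delta_\sphere\psi_j \ d\sigma = -\mu_j h_j$, which yields $\mu_j\int r^{N-3}h_j^2 \ dr \le \int_\OD r^{-2}|\nabla_\sphere\wtu_j|^2 \ dx$.

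For boundedness and $h_j(0)=0$, I would use the indicial equation at $r=0$: $\alpha(\alpha+N-2) = \mu_j$, with roots $\alpha_\pm = \frac{-(N-2) \pm \sqrt{(N-2)^2+4\mu_j}}{2}$. Here $\alpha_+ > 0$ and $\alpha_- < -(N-2) \le -1$. By Frobenius theory, every solution of the ODE on $(0,1)$ is a combination $c_+ r^{\alpha_+}(1+O(r^2)) + c_- r^{\alpha_-}(1+O(r^2))$; explicitly these are $r^{1-N/2}J_\nu(\sqrt{\lambda_1}r)$ and $r^{1-N/2}Y_\nu(\sqrt{\lambda_1}r)$. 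If $c_- \neq 0$, then $r^{N-3}h_j^2 \sim r^{N-3+2\alpha_-}$, and $N-3+2\alpha_- < -(N-1) \le -1$, so this is non-integrable near $0$ and contradicts \eqref{eq:cone_estimate_hj_a}. Hence $c_- = 0$, and $h_j = c_+ r^{\alpha_+}(1+O(r^2))$ is continuous on $[0,1]$, bounded, and vanishes at $0$ because $\alpha_+ > 0$.

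The main obstacle is not the computation but the justification. One must carry out the energy identity in polar coordinates for an $H^1$ function whose regularity near the vertex is unknown, and one must pass from the distributional ODE on $(0,1)$ to the Frobenius description. The first point is handled by the projection argument above, which needs only $\wtu_j \in H^1$. The second is standard, since $h_j$ solves a linear ODE with smooth coefficients on $(0,1]$ and is therefore classical there.
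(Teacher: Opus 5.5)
Your argument is correct. The two weighted bounds are obtained exactly as in the paper, by writing $\int_{\OD}|\nabla\wtu_j|^2\,dx$ in polar coordinates. Your supplementary projection argument (Cauchy--Schwarz in $q$ for $h_j'$, and integration by parts against $\psi_j$ using its Neumann condition for the angular part) is a real gain in rigor. It derives both bounds from $\wtu_j\in H^1(\OD)$ alone, without the separated form $\wtu_j=h_j\psi_j$ that the paper takes from the Corollary.

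Where you genuinely differ is the last assertion. The paper gives no argument of its own for $h_j\in L^\infty(0,1)$ and $h_j(0)=0$. It defers to \cite[Lemma A.9]{DancerGladialiGrossi2017}, the lemma also invoked in \cite{AfonsoIacopettiPacella2024Energypublished} for linearized equations whose zero-order coefficient is a non-constant radial function. You instead use that here the coefficient is the constant $\lambda_1(\OD)$. Then the solutions on $(0,1)$ are spanned by $r^{1-N/2}J_{\nu_j}(\sqrt{\lambda_1(\OD)}\,r)$ and $r^{1-N/2}Y_{\nu_j}(\sqrt{\lambda_1(\OD)}\,r)$, with $\nu_j=\sqrt{(N/2-1)^2+\mu_j}$. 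The finiteness of $\int_0^1 r^{N-3}h_j^2\,dr$ then rules out the singular branch.

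Your route is self-contained and even yields the explicit formula $h_j(r)=-u_D'(1)\,r^{1-N/2}J_{\nu_j}(\sqrt{\lambda_1(\OD)}\,r)/J_{\nu_j}(\sqrt{\lambda_1(\OD)})$. The cited route is shorter on the page and is not tied to constant coefficients.

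One small imprecision: when $\nu_j$ is an integer (equivalently, $\alpha_+-\alpha_-$ is an even integer), the second Frobenius solution contains a term proportional to $r^{\alpha_+}\log r$. So it is not literally of the form $r^{\alpha_-}(1+O(r^2))$. Its leading behaviour is still $c\,r^{\alpha_-}$ with $c\neq 0$, which is all your non-integrability argument needs.
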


\begin{proof}
    We follow \cite[Proposition 3.12]{AfonsoIacopettiPacella2024Energypublished}.

    Since $\wtu_j \in H^1(\OD)$ and $\psi_j$ is an $L^2$-normalized solution to \eqref{eq:cone_Neumann_eigenvalue_problem_D}, then passing to polar coordinates we obtain
    \begin{align}
        + \infty
        & > \int_\OD |\nabla \wtu_j|^2 \ dx \nonumber \\
        & = \left(\int_0^1 r^{N - 1}(h_j')^2 \ dr\right)\left(\int_D \psi_j^2 \ d\sigma\right) + \left(\int_0^1 r^{N - 3} h_j^2 \ dr \right) \left(\int_D |\nabla_\sphere \psi_j|^2 \ d\sigma \right) \nonumber \\
        & = \int_0^1 r^{N - 1}(h_j')^2 \ dr + \mu_j \int_0^1 r^{N - 3} h_j^2 \ dr, \nonumber
    \end{align}
    which immediately implies \eqref{eq:cone_estimate_hj_a} and \eqref{eq:cone_estimate_hj_b}.

    With these estimates at hand, we can proceed as in \cite[Lemma A.9]{DancerGladialiGrossi2017} to conclude.
\end{proof}

\begin{lemma}
    There exists a constant $C > 0$ such that, for any $\eta \in T_0\M$, the solution $\wtu_\eta$ to \eqref{eq:cone_pde_u_eta_prime_spherical_sector} satisfies
    \begin{equation}
        \label{eq:cone_estimate_u_eta}
        \|\wtu_\eta\|_{H^1(\OD)} \leq C \|\eta\|_{H^{1/2}(D)}.
    \end{equation}
\end{lemma}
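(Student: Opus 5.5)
The plan is to write $\wtu_\eta$ as a lifting of the boundary datum plus a correction that solves a problem with homogeneous Dirichlet data. The correction is then controlled by coercivity of the eigenvalue operator on the orthogonal complement of $u_D$.

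\emph{Step 1: the lifting.} Since $D$ is smooth and $\OD$ is Lipschitz away from the vertex, there is a bounded right inverse of the trace, $E: H^{1/2}(D) \to H^1(\OD)$. I would choose it supported away from $O$, for instance by taking the harmonic-type extension in a collar $\{1/2<|x|<1\}$ and multiplying by a radial cutoff. Concretely, a radial cutoff times the function $(r,q)\mapsto \sum_j \hat\eta_j e^{-\sqrt{\mu_j}(1-r)}\psi_j(q)$ gives an explicit extension with $\|E\eta\|_{H^1} \le C\|\eta\|_{H^{1/2}(D)}$. This extension never meets the vertex. Set $g = -u_D'(1)E\eta$, so that $g = -u_D'(1)\eta$ on $D$ and $\|g\|_{H^1}\le C\|\eta\|_{H^{1/2}}$. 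Note that $u_D'(1)$ is a fixed constant.

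\emph{Step 2: the equation for the correction.} Write $\wtu_\eta = g + w + c\,u_D$ with $w \in H_0^1(\OD\cup\Gamma_D)$. Because $\eta\in T_0\M$, we have $\lambda'=0$, so $\wtu_\eta$ satisfies the problem \eqref{eq:cone_pde_u_eta_prime_spherical_sector} weakly:
\begin{equation*}
\int_{\OD}\nabla \wtu_\eta\nabla v - \lambda_1(\OD)\wtu_\eta v = 0 \quad \forall v\in H_0^1(\OD\cup\Gamma_D).
\end{equation*}
The term $c\,u_D$ drops out of this identity since $u_D$ is an eigenfunction. I would split $w = w_\perp + a u_D$ with $w_\perp\perp u_D$ in $L^2$ and absorb $a$ into $c$. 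Testing with $v\perp u_D$ in $H_0^1(\OD\cup\Gamma_D)$ gives
\begin{equation*}
B(w_\perp,v)\coloneqq\int\nabla w_\perp\nabla v-\lambda_1 w_\perp v = -\int \nabla g\nabla v - \lambda_1 g v .
\end{equation*}
The key estimate is coercivity of $B$ on $\{u_D\}^\perp$. By simplicity of $\lambda_1(\OD)$ and the spectral decomposition recalled in Section~\ref{sec:cone_prelim}, for such $v$ we have $\int|\nabla v|^2 \ge \lambda_2\int v^2$ with $\lambda_2>\lambda_1$. Hence
\begin{equation*}
B(v,v)\ge \Bigl(1-\tfrac{\lambda_1}{\lambda_2}\Bigr)\int|\nabla v|^2 \ge c_0\|v\|_{H^1}^2 .
\end{equation*}
Lax--Milgram then gives $\|w_\perp\|_{H^1}\le C\|g\|_{H^1}$. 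Uniqueness of the solution (Proposition~\ref{prop:cone_first_derivative}) identifies this construction with $\wtu_\eta$.

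\emph{Step 3: the multiple of $u_D$.} It remains to bound $c$, which is fixed by the constraint $\int \wtu_\eta u_D=0$. This gives $c = -\int g u_D$ because $\|u_D\|_2=1$ and $w_\perp\perp u_D$. Therefore $|c|\le\|g\|_2$, and summing the three pieces yields \eqref{eq:cone_estimate_u_eta}.

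The main obstacle is Step 1 near the vertex and near $\partial D$, where $\OD$ is not smooth. The cutoff keeps the lifting away from $O$, and near $\partial D$ the collar is a product $D\times(1/2,1)$. This is why the spectral extension in the $\psi_j$ basis is convenient: it yields the bound directly, because $\sum_j(1+\mu_j)^{1/2}\hat\eta_j^2$ is equivalent to $\|\eta\|_{H^{1/2}(D)}^2$, which follows by interpolating between the $L^2(D)$ and $H^1(D)$ norms of the Neumann expansion.
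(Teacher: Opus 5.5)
Your proposal is correct and follows essentially the same route as the paper. Both arguments lift the boundary datum with an $H^{1/2}(D)\to H^1(\Omega_D)$ bounded extension, then control the $H_0^1(\Omega_D\cup\Gamma_D)$-correction by Lax--Milgram using the coercivity constant $1-\lambda_1/\lambda_2$ on $\langle u_D\rangle^\perp$; the only differences are cosmetic (you build the lift explicitly from the Neumann expansion and fix the $u_D$-component through the constant $c$, where the paper cites trace theory and projects the lift onto $\langle u_D\rangle^\perp$).
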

\begin{proof}
    By the theory of traces (see, e.g., \cite{Wloka1987, Leoni2017}), we can choose a lift $\phi \in H^1(\OD)$ such that $\phi = - u_D'(1)\eta$ on $D$ (in the sense of traces) and
    \begin{equation}
        \label{eq:cone_proof_estimate_a}
        \|\phi\|_{H^1(\OD)} \leq C_1 \|\eta\|_{H^{1/2}(D)}.
    \end{equation}
    By composing with the orthogonal projection operator onto the space $\langle u_D\rangle^\perp$ if necessary, we can further assume that
    \begin{equation*}
        \int_\OD \phi u_D \ dx = 0.
    \end{equation*}

    With these ingredients at hand, finding a weak solution $\wtu_\eta \in H^1(\OD)$ to \eqref{eq:cone_pde_u_eta_prime_spherical_sector} is equivalent to finding a weak solution $\wtv \in (H_0^1(\OD \cup \Gamma_D) \cap \langle u_D \rangle^\perp)$ to
    \begin{equation}
        \label{eq:cone_variational_pde_v}
        \int_\OD \nabla \wtv \nabla v \ dx - \int_\OD \lambda_1(\OD) \wtv v \ dx = - \Phi(v) \quad \forall v \in H_0^1(\OD \cup \Gamma_D),
    \end{equation}
    where $\Phi \in H^{-1}(\OD \cup \Gamma_D)$ is the continuous linear functional
    \begin{equation*}
        \Phi(v) = \int_\OD \nabla \phi \nabla v \ dx - \lambda_1(\OD) \int_\OD \phi v \ dx.
    \end{equation*}
    To this end, we may use the Lax--Milgram theorem. Indeed, solutions are the critical points in the subspace $\langle u_D \rangle^\perp$ of the functional
    \begin{equation*}
        J(v) = \frac{1}{2} \int_\OD |\nabla v|^2 - \frac{1}{2}\lambda_1(\OD) \int_\OD v^2 \ dx + \Phi(v).
    \end{equation*}
    Note that $J$ is coercive in $\langle u_D \rangle^\perp$, since for every $v$ in this subspace it holds that
    \begin{equation*}
        \int_\OD (|\nabla v|^2 - \lambda_1(\OD) v^2) \ dx \geq \left(1 - \frac{\lambda_1(\OD)}{\lambda_2(\OD)} \right) \int_{\OD} |\nabla v|^2 \ dx.
    \end{equation*}
    Hence, by Lax-Milgram theorem, for each $\eta$ there exists a unique solution $\wtv$ to \eqref{eq:cone_variational_pde_v}, which moreover depends continuously on the datum $\phi$, that is, there exists some constant $C_2 > 0$ independent of $\phi$ such that
    \begin{equation}
        \label{eq:cone_proof_estimate_b}
        \|\wtv\|_{H_0^1(\OD \cup \Gamma_D)} \leq C_2 \|\phi\|_{H^1(\OD)},
    \end{equation}
    see \cite[Section 3.2.1]{Kesavan1989}. Then, writing $\wtu_\eta = \wtv + \phi$ and combining \eqref{eq:cone_proof_estimate_a} with \eqref{eq:cone_proof_estimate_b}, we readily obtain \eqref{eq:cone_estimate_u_eta}.
\end{proof}

The next result shows that we can decompose a solution $\wtu_\eta$ for \eqref{eq:cone_pde_u_eta_prime_spherical_sector} in terms of the functions $\wtu_j$.

\begin{lemma}
    \label{lem:cone_decomposition_u_eta}    
    Let $\eta \in T_0\M$ and let $\wtu_\eta$ be the solution of \eqref{eq:cone_pde_u_eta_prime_spherical_sector}. Then
    \begin{equation*}
        \wtu_\eta = \sum_{j = 1}^\infty (\eta, \psi_j) \wtu_j
    \end{equation*}
    where $(\cdot, \cdot)$ denotes the inner product in $L^2(D)$.
\end{lemma}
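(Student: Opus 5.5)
The plan is to combine linearity of the map $\eta \mapsto \wtu_\eta$ with the continuity estimate \eqref{eq:cone_estimate_u_eta}, and to approximate $\eta$ by finite Neumann expansions.

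\emph{Linearity and finite sums.} Problem \eqref{eq:cone_pde_u_eta_prime_spherical_sector} is linear in $\eta$. Indeed, the Dirichlet datum $-u_D'(1)\eta$ depends linearly on $\eta$, the equation and the Neumann condition are homogeneous, and the orthogonality constraint is linear. Uniqueness was obtained via Lax--Milgram in the proof of the previous lemma. It follows that $\eta \mapsto \wtu_\eta$ is linear on $T_0\M$.

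For $\eta \in T_0\M$, we have $(\eta, \psi_0) = c\int_D \eta \, d\sigma = 0$. So the $L^2(D)$-expansion of $\eta$ in the Neumann basis is
\begin{equation*}
    \eta = \sum_{j \geq 1} (\eta, \psi_j)\psi_j .
\end{equation*}
Set $\eta_n = \sum_{j=1}^n (\eta,\psi_j)\psi_j$. Each $\psi_j$ belongs to $\ctd$ by elliptic regularity for \eqref{eq:cone_Neumann_eigenvalue_problem_D}, and has zero mean. Hence $\eta_n \in T_0\M$, and linearity gives
\begin{equation*}
    \wtu_{\eta_n} = \sum_{j=1}^n (\eta,\psi_j)\wtu_j .
\end{equation*}

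\emph{Passage to the limit.} By \eqref{eq:cone_estimate_u_eta},
\begin{equation*}
    \|\wtu_\eta - \wtu_{\eta_n}\|_{H^1(\OD)} = \|\wtu_{\eta-\eta_n}\|_{H^1(\OD)} \leq C\|\eta-\eta_n\|_{H^{1/2}(D)} .
\end{equation*}
The estimate is stated for $\eta \in T_0\M$, but its proof only uses the trace lift and Lax--Milgram, so it holds for any zero-mean $H^{1/2}$ datum. It therefore suffices to show that $\eta_n \to \eta$ in $H^{1/2}(D)$; the series then converges in $H^1(\OD)$ and its sum is $\wtu_\eta$.

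\emph{Main obstacle: convergence in $H^{1/2}(D)$.} This is the step I expect to require care. My approach is to use the spectral characterization of Sobolev norms associated with the Neumann Laplace--Beltrami operator. Since $\eta \in \ctd \subset H^1(D)$, we have
\begin{equation*}
    \int_D |\nabla_\sphere \eta|^2 \, d\sigma = \sum_j \mu_j (\eta,\psi_j)^2 < \infty ,
\end{equation*}
which follows from the Neumann eigenfunctions forming an orthogonal basis of $H^1(D)$ as well. Consequently
\begin{equation*}
    \|\eta-\eta_n\|_{H^1(D)}^2 = \sum_{j>n} (1+\mu_j)(\eta,\psi_j)^2 \to 0 ,
\end{equation*}
so $\eta_n \to \eta$ in $H^1(D)$, and hence in $H^{1/2}(D)$ by the continuous embedding. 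This avoids needing any interpolation identification of $H^{1/2}$.

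As a sanity check, one can verify the result directly. Using $\wtu_j = h_j\psi_j$ from the Corollary, and expanding $\wtu_\eta(r,\cdot)$ in the basis $\{\psi_j\}$ for each fixed $r$, the coefficient functions satisfy \eqref{eq:cone_pde_hj} with datum $-u_D'(1)(\eta,\psi_j)$. The $j=0$ coefficient is killed by the orthogonality to $u_D$ together with $\int_D \eta \, d\sigma = 0$. Uniqueness for the ODE in the class given by Lemma \ref{lem:cone_reg_h_j} then identifies each coefficient as $(\eta,\psi_j)h_j$.
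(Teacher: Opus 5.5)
Your proof is correct and follows essentially the same route as the paper: you truncate the Neumann expansion of $\eta$, use linearity to identify $\sum_{j=1}^k(\eta,\psi_j)\wtu_j$ as the solution with the truncated datum, and pass to the limit via the estimate \eqref{eq:cone_estimate_u_eta}. Your justification that the truncations converge in $H^{1/2}(D)$ goes through convergence in $H^1(D)$, using that the $\psi_j$ also form an orthogonal basis of $H^1(D)$, and it makes explicit what the paper only attributes to ``the regularity of $\eta$''.
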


\begin{proof}
    We begin by recalling that since $\{\psi_j\}_{j \in \N \cup \{0\}}$ is an orthonormal basis for $L^2(D)$, then
    \begin{equation*}
        \eta = \sum_{j = 1}^\infty (\eta, \psi_j) \psi_j.
    \end{equation*}
    Without loss of generality, we assume that $\int_D \eta^2 \ d\sigma = 1$, so
    \begin{equation*}
        \sum_{j = 1}^\infty (\eta, \psi_j)^2 = 1.
    \end{equation*}
    For each $k \in \N$, let
    \begin{equation*}
        u_k = \sum_{j = 1}^k (\eta, \psi_j) \wtu_j.
    \end{equation*}
    Since each $u_k$ is a finite sum, it is clear that $u_k \in H^1(\OD)$ and is a solution of \eqref{eq:cone_pde_u_eta_prime_spherical_sector} with boundary condition
    \begin{equation*}
        u_k = - u_D'(1)\left(\sum_{j = 1}^k(\eta, \psi_j)\psi_j\right) \quad \text{ on } D.
    \end{equation*}
    Then, by making use of the regularity of $\eta$ and \eqref{eq:cone_estimate_u_eta}, we obtain
    \begin{align}
        0
        & \leq \limsup_{k \to \infty} \|\wtu_\eta - u_k\|_{H^1(\OD)} \nonumber \\
        & \leq \limsup_{k \to \infty} C \left\|\sum_{j = k + 1}^\infty (\eta, \psi_j) \psi_j\right\|_{H^{1/2}(D)} \nonumber \\
        & = 0. \nonumber
    \end{align}
    Now, by definition, 
    \begin{equation*}
        u_k \to \sum_{j = 1}^\infty (\eta, \psi_j) \wtu_j
    \end{equation*}
    strongly in $H^1(\OD)$ as $k \to \infty$ (see \cite[Lemma 5.1]{Brezis2010}), which completes the proof.
\end{proof}

Then, we have the following positivity result:
\begin{lemma}
    \label{lem:cone_positivity_h_j}
    For all $j \in \N$, it holds that
    \begin{equation}
        \label{eq:cone_positivity_h_j}
        h_j > 0 \quad \text{ in } (0, 1).
    \end{equation}
\end{lemma}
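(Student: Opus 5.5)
Since $h_j(1) = -u_D'(1) > 0$ (by Hopf's lemma, or by Lemma \ref{lem:cone_overdet_critical_point}, which gives $u_D'(1) = -\sqrt{-\Lambda_D} < 0$) and $h_j(0) = 0$ by Lemma \ref{lem:cone_reg_h_j}, I would argue by contradiction through an energy argument on the region where $h_j$ is not positive. Suppose $h_j \le 0$ somewhere in $(0,1)$. By continuity and $h_j(1) > 0$, there is some $r_0 \in (0,1)$ with $h_j(r_0) = 0$ such that $I \coloneqq \{r \in (0, r_0) : h_j(r) \neq 0\}$ is nonempty. (If $h_j$ vanished identically on $(0,r_0)$, ODE uniqueness at the regular point $r_0$ would force $h_j \equiv 0$ on $(0,1)$, contradicting $h_j(1) > 0$.)

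I would then build the test function $w(x) = \hat h(|x|)\,\psi_j(x/|x|)$, where $\hat h = h_j$ on $(0,r_0)$ and $\hat h = 0$ on $[r_0,1)$. By \eqref{eq:cone_estimate_hj_a}–\eqref{eq:cone_estimate_hj_b}, $w \in H^1_0(\OD \cup \Gamma_D)$; indeed it lies in $H^1_0(\Omega_{r_0} \cup \Gamma_{r_0})$ with $\Omega_{r_0} = \cone \cap B_{r_0}$. Multiply \eqref{eq:cone_pde_hj} by $r^{N-1} h_j$ and integrate over $(0,r_0)$. The boundary terms vanish: at $r_0$ because $h_j(r_0) = 0$, and at $0$ because $r^{N-1} h_j h_j' \to 0$ along a suitable sequence, using $h_j \in L^\infty$ together with \eqref{eq:cone_estimate_hj_b}. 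This gives
\begin{equation*}
\int_0^{r_0} r^{N-1}(h_j')^2\,dr + \mu_j \int_0^{r_0} r^{N-3} h_j^2\,dr = \lambda_1(\OD) \int_0^{r_0} r^{N-1} h_j^2\,dr,
\end{equation*}
that is, $\int |\nabla w|^2 = \lambda_1(\OD)\int w^2$.

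On the other hand, since $\psi_j$ has zero mean on $D$, $w$ is orthogonal in $L^2$ to the radial first eigenfunction of $\Omega_{r_0}$. Hence $\int |\nabla w|^2 \ge \lambda_2(\Omega_{r_0}) \int w^2$. By scaling, $\lambda_2(\Omega_{r_0}) = r_0^{-2}\lambda_2(\OD) > \lambda_2(\OD) > \lambda_1(\OD)$, which contradicts the identity above. A simpler route to the same contradiction uses separation of variables: $\int|\nabla w|^2 \ge \lambda_1(\Omega_{r_0})\int w^2 = r_0^{-2}\lambda_1(\OD)\int w^2 > \lambda_1(\OD)\int w^2$, since $w \neq 0$ and $w$ is admissible for the Rayleigh quotient on $\Omega_{r_0}$. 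This already suffices and avoids $\lambda_2$ altogether.

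The main obstacle is the rigorous justification at the vertex: showing that the integration by parts at $r=0$ produces no boundary contribution, and that $w$ is a legitimate $H^1$ function. This is exactly where Lemma \ref{lem:cone_reg_h_j} is used. I would handle it by integrating over $(\varepsilon, r_0)$ and letting $\varepsilon \to 0$ along a sequence with $\varepsilon^{N-1}|h_j h_j'|(\varepsilon) \to 0$. Such a sequence exists because $r^{N-1}(h_j')^2$ is integrable, so $\liminf_{r\to0} r^{N}(h_j')^2 = 0$, and $h_j$ is bounded.
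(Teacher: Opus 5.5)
Your proposal is correct and follows essentially the paper's argument: an energy identity for $h_j$ on $(0,r_0)$ obtained from the Sturm--Liouville form of \eqref{eq:cone_pde_hj}, contradicted by $\lambda_1(\Omega_{r_0}) = r_0^{-2}\lambda_1(\OD) > \lambda_1(\OD)$. The paper drops the nonnegative $\mu_j$-term and compares with the radial eigenvalue on $(0,r_0)$, whereas you keep that term and test with $w = \hat h\,\psi_j$, which is equivalent; you also justify two points the paper leaves implicit, namely that the boundary term at the vertex vanishes and, via ODE uniqueness, that $h_j \not\equiv 0$ on $(0,r_0)$.
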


\begin{proof}
    Suppose that $h_j(r_0) = 0$ for some $r_0 \in (0, 1)$. Then $h_j$ satisfies
    \begin{equation*}
        \left\{
        \begin{array}{rcll}
            - (r^{N - 1} h_j')' - \lambda_1(\OD) r^{N - 1} h_j & = & - \mu_j r^{N - 3} h_j & \quad \text{ in } (0, r_0) \\
            h_j(0) & = & 0 & \\
            h_j(r_0) & = & 0 &
        \end{array}
        \right.
    \end{equation*}
    Multiplying by $h_j$ and integrating in $(0, r_0)$, we obtain
    \begin{align}
        \int_0^{r_0} r^{N - 1} (h_j')^2 
        & = \lambda_1(\OD) \int_0^{r_0} r^{N - 1} h_j^2 \ dr - \mu_j \int_0^{r_0} r^{N - 3} (h_j)^2 \ dr \nonumber \\
        & < \lambda_1(\OD) \int_0^{r_0} r^{N - 1} h_j^2 \ dr, 
    \end{align}
    contradicting $\lambda_1((0, r_0)) > \lambda_1((0, 1))$.
\end{proof}

In the next definition, we set in a precise way what we mean by a stable/unstable critical shape.

\begin{definition}
    Let $\varphi \in \M$. The domain $\Omega_\varphi$ is said to be a stable critical domain if $\varphi$ is a critical point for $\varphi \in \M \mapsto \lambda_1|_\M(\varphi)$ and the second derivative is a positive definite quadratic form, that is,
    \begin{equation}
        \label{eq:cone_critical_domain_def}
        \lambda_1|_\M'(\varphi)[\eta] = 0 \quad \forall \eta \in T_\varphi\M
    \end{equation}
    and
    \begin{equation*}
        \lambda_1|_\M''(\varphi)[\eta, \eta] > 0 \quad \forall \eta \in T_\varphi\M, \ \eta \not\equiv 0.
    \end{equation*}
    On the contrary, $\Omega_\varphi$ is said to be an unstable critical domain if \eqref{eq:cone_critical_domain_def} holds and there exists some $\eta \in T_\varphi\M$ such that
    \begin{equation*}
        \lambda_1|_\M''(\varphi)[\eta, \eta] < 0.
    \end{equation*}
\end{definition}

We now prove the main result of this section. It is interesting to note that the threshold for stability is precisely the same as that found by \cite{IacopettiPacellaWeth2022} for the torsion problem and \cite{AfonsoIacopettiPacella2024Energypublished} for autonomous nonlinear problems.

\begin{theorem}
    \label{thm:cone_stability}
    Let $\cone$ be the cone spanned by the smooth domain $D \subset \sphere$, with $N \geq 3$, and let $\mu_1$ be the first non-zero Neumann eigenvalue of the Laplace--Beltrami operator on $D$.
    \begin{enumerate}[label=(\roman*)]
        \item If $\mu_1 < N - 1$, then $\OD$ is an unstable critical domain.

        \item If $\mu_1 > N - 1$, then $\Omega_D$ is a stable critical domain.
    \end{enumerate}
\end{theorem}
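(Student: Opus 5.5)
The plan is to diagonalize the quadratic form of Lemma \ref{lem:cone_simplified_second_derivative} in the basis $\{\psi_j\}_{j \geq 1}$. After that, the sign question reduces to a one-dimensional comparison between the radial profiles $h_j$ and the function $-u_D'$.

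\textbf{Step 1: diagonalization.} Let $\eta \in T_0\M$ and write $\eta = \sum_{j \geq 1} a_j \psi_j$ with $a_j = (\eta, \psi_j)$. By Lemma \ref{lem:cone_decomposition_u_eta} and the corollary to Proposition \ref{prop:cone_def_h_j}, we have $\wtu_\eta = \sum_j a_j h_j(r)\psi_j(q)$, so that $\frac{\partial \wtu_\eta}{\partial \nu} = \sum_j a_j h_j'(1)\psi_j$ on $D$. I would not differentiate the series termwise on $D$. Instead I would rewrite $\int_D \eta \frac{\partial \wtu_\eta}{\partial\nu}\,d\sigma$ through Green's formula (Lemma \ref{lemma:cone_divergence_theorem_sector_like_domains}) as a bulk expression. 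Concretely, this is $\int_{\OD} (\nabla \wtu_\eta \nabla \Phi - \lambda_1(\OD)\wtu_\eta \Phi)\,dx$, with $\Phi$ any $H^1$ lift of $\eta$, for instance $-\wtu_\eta/u_D'(1)$. That expression is continuous under the $H^1$ convergence of Lemma \ref{lem:cone_decomposition_u_eta}. Orthonormality of the $\psi_j$ then gives
\begin{equation*}
\lambda_1|_\M''(0)[\eta,\eta] = -2u_D'(1)\sum_{j\geq 1} a_j^2\bigl(h_j'(1)+u_D''(1)\bigr).
\end{equation*}
Since $u_D'(1) = -\sqrt{-\Lambda_D} < 0$, the sign of each term is the sign of $h_j'(1)+u_D''(1)$.

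\textbf{Step 2: the threshold $\mu = N-1$.} For $\mu \geq 0$, let $h_\mu$ be the solution of \eqref{eq:cone_pde_hj} with $\mu_j$ replaced by $\mu$, with $h_\mu(1) = -u_D'(1)$ and regular at $0$. The key observation is that $w := -u_D'$ solves this problem with $\mu = N-1$. Indeed, differentiating the radial equation $u_D''+\frac{N-1}{r}u_D'+\lambda_1(\OD)u_D = 0$ in $r$ gives
\begin{equation*}
w''+\frac{N-1}{r}w'-\frac{N-1}{r^2}w+\lambda_1(\OD)w=0.
\end{equation*}
Moreover $w(1) = -u_D'(1)$, and $w(0) = 0$ with $w$ smooth. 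Hence $h_{N-1}'(1) + u_D''(1) = w'(1)+u_D''(1) = 0$. Note that $w$ is used only as an ODE comparison function, so no $\psi_j$ needs to have eigenvalue $N-1$. Also $w > 0$ on $(0,1]$, since $u_D$ is radially decreasing.

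\textbf{Step 3: strict monotonicity of $\mu \mapsto h_\mu'(1)$.} This is the main step. Take $\mu < \mu'$. From $(r^{N-1}h')' = r^{N-1}\bigl(-\lambda_1(\OD) h + \mu h/r^2\bigr)$, a Wronskian computation gives
\begin{equation*}
\bigl(r^{N-1}(h_\mu' h_{\mu'} - h_{\mu'}' h_\mu)\bigr)' = (\mu-\mu')\,r^{N-3}h_\mu h_{\mu'}.
\end{equation*}
Both functions are positive on $(0,1)$: for $h_j$ this is Lemma \ref{lem:cone_positivity_h_j}, the same proof applies to general $\mu$, and for $w$ it is direct. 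So the right-hand side is strictly negative. Integrating over $(r_k,1)$ and letting $r_k \to 0$ yields $-u_D'(1)\bigl(h_\mu'(1)-h_{\mu'}'(1)\bigr) < 0$, that is, $h_\mu'(1) < h_{\mu'}'(1)$.

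The delicate point is the vanishing of the boundary term at $0$. By Lemma \ref{lem:cone_reg_h_j}, $\int_0^1 r^{N-1}(h')^2\,dr$ and $\int_0^1 r^{N-3}h^2\,dr$ are both finite. Cauchy--Schwarz then makes $r^{N-2}|h'_\mu h_{\mu'}|$ integrable, so $\int_0 r^{N-1}|h'_\mu h_{\mu'}|\,\frac{dr}{r} < \infty$. This forces $\liminf_{r\to 0} r^{N-1}|h_\mu'h_{\mu'}| = 0$, and the same holds for the symmetric term. I would choose the sequence $r_k$ along which both terms vanish, using integrability of their sum. The uniqueness of the regular solution $h_\mu$ comes from the same energy identity combined with $\lambda_1$-monotonicity, as in Lemma \ref{lem:cone_positivity_h_j}.

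\textbf{Step 4: conclusion.} Steps 2 and 3 give $h_j'(1)+u_D''(1) > 0$ when $\mu_j > N-1$, and $h_j'(1)+u_D''(1) < 0$ when $\mu_j < N-1$.

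For (i), suppose $\mu_1 < N-1$. Take $\eta = \psi_1 \in T_0\M$; then $\lambda_1|_\M''(0)[\psi_1,\psi_1] < 0$, so $\OD$ is unstable.

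For (ii), suppose $\mu_1 > N-1$. Then $\mu_j \geq \mu_1 > N-1$ for every $j \geq 1$, so every coefficient in the series of Step 1 is positive. Any $\eta \not\equiv 0$ in $T_0\M$ has some $a_j \neq 0$ with $j \geq 1$, because $a_0 = 0$ by the tangent-space condition. Hence the form is positive definite and $\OD$ is stable. Criticality of $\OD$ in both cases was already established via Lemma \ref{lem:cone_overdet_critical_point}.
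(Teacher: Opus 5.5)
Your proposal is correct and follows essentially the paper's route. The key step is the same Sturm--Liouville/Wronskian identity between the profiles $h_j$ and $u_D'$ (which solves the $\mu=N-1$ radial equation), combined with the spectral decomposition of $\wtu_\eta$; you package it as monotonicity of $\mu\mapsto h_\mu'(1)$ with $-u_D'$ as the $\mu=N-1$ profile, whereas the paper compares $h_1$ with $u_D'$ and then uses $h_k'(1)\ge h_j'(1)$. Your Green's-formula justification of the termwise expansion of $\int_D\eta\,\partial_\nu\wtu_\eta$ and your liminf treatment of the boundary term at $0$ are somewhat more careful than the paper's, and the only slip is cosmetic: $h_\mu$ should be defined for $\mu>0$ only, since for $\mu=0$ the regular solutions are multiples of $u_D$, which vanish at $r=1$.
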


\begin{proof}
    We argue as in \cite[Theorem 1.1]{AfonsoIacopettiPacella2024Energypublished}.

    We first prove \textit{(i)}. Let $\wtu_1(r, q) = h_1(r) \psi_1(q)$ in $\OD$ be the solution of \eqref{eq:cone_pde_u_eta_prime_spherical_sector}. By Lemma \ref{lem:cone_simplified_second_derivative}, we have
    \begin{equation}
        \label{eq:cone_proof_main_lambda_primeprime}
        \lambda_1|_\M''(0)[\psi_1, \psi_1] = - 2 u_D'(1)(h_1'(1) + u_D''(1)).
    \end{equation}

    Writing the equation for $h_1$ in Sturm--Liouville form, we obtain
    \begin{equation}
        \label{eq:cone_proof_main_equation_h_1_SL}
        - (r^{N - 1} h_1')' - r^{N - 1} \lambda_1(\OD) h_1 = - r^{N - 3} \mu_1 h_1.
    \end{equation}
    On the other hand, writing $- \Delta u_D = \lambda_1(\OD) u_D$ in polar coordinates and taking the derivative with respect to $r$, we obtain
    \begin{equation*}
        - (u_D')'' - \frac{N - 1}{r} (u_D')' - \lambda_1(\OD) u_D' = - \frac{N - 1}{r^2} u_D', 
    \end{equation*}
    which in Sturm--Liouville form is
    \begin{equation}
        \label{eq:cone_proof_main_equation_u_D_prime_SL}
        - (r^{N - 1} u_D'')' - r^{N - 1} \lambda_1(\OD) u_D' = - r^{N - 3} (N - 1) u_D'.
    \end{equation}

    Let $\bar r \in (0, 1)$. Multiplying \eqref{eq:cone_proof_main_equation_h_1_SL} by $u_D'$ and integrating by parts in $(\bar r, 1)$ we obtain
    \begin{align}
        \int_{\bar r}^1 r^{N - 1} h_1' u_D'' \ dr - (r^{N - 1} h_1' u_D')|_{\bar r}^1 - \lambda_1(\OD) \int_{\bar r}^1 r^{N - 1}
        u_D' h_1 \ dr = - \mu_1 \int_{\bar r}^1 r^{N - 3} h_1 u_D' \ dr. \label{eq:cone_proof_main_3.43}
    \end{align}
    Similarly, multiplying \eqref{eq:cone_proof_main_equation_u_D_prime_SL} by $h_1$ and integrating by parts we deduce that
    \begin{align}
        \int_{\bar r}^1 r^{N - 1} h_1' u_D'' \ dr - (r^{N - 1} h_1 u_D'')|_{\bar r}^1 - \lambda_1(\OD) \int_{\bar r}^1 r^{N - 1} u_D' h_1 \ dr = - (N - 1) \int_{\bar r}^1 r^{N - 3} u_D' h_1 \ dr. \label{eq:cone_proof_main_3.44}
    \end{align}

    Combining Lemma \ref{lem:cone_reg_h_j} with Hölder's inequality, and since $u_D'$ is bounded in $[0, 1]$, the right-hand sides of \eqref{eq:cone_proof_main_3.43} and \eqref{eq:cone_proof_main_3.44} remain finite in the limit as $\bar r \to 0$.

    Next, we claim that
    \begin{equation}
        \label{eq:cone_proof_main_claim}
        \lim_{\bar r \to 0^+} \bar r^{N - 1} h_1'(\bar r) u_D'(\bar r) = 0. 
    \end{equation}
    To prove the claim, we integrate \eqref{eq:cone_proof_main_equation_h_1_SL} to obtain
    \begin{align}
        \left|\int_{\bar r}^1 -(r^{N - 1} h_1')' \ dr \right|
        & = \left|\bar r^{N - 1} h_1'(\bar r) - h_1'(1) \right| \nonumber \\
        & \leq \lambda_1(\OD) \int_{\bar r}^1 r^{N - 1} h_1 \ dr + \int_{\bar r}^1 r^{N - 3} \mu_1 h_1 \ dr \nonumber \\
        & \leq C_1 \nonumber
    \end{align}
    for some $C_1 > 0$. As a consequence, there exists $C_2 > 0$ such that
    \begin{equation*}
        \limsup_{\bar r \to 0^+} \bar r^{N - 1} |h_1'(\bar r)| \leq C_2.
    \end{equation*}
    Then \eqref{eq:cone_proof_main_claim} follows from the fact that
    \begin{equation*}
        \lim_{\bar r \to 0^+} u_D'(\bar r) = u_D'(0) = 0,
    \end{equation*}
    because $u_D = u_{B_1(O)}|_\cone$ and $\nabla u_{B_1(O)}(O) = 0$. 

    Now, subtracting \eqref{eq:cone_proof_main_3.44} from \eqref{eq:cone_proof_main_3.43} and taking the limit as $\bar r \to 0^+$ it follows that
    \begin{equation}
        \label{eq:cone_proof_main_main_eq}
        - u_D'(1)(h_1'(1) + u_D''(1)) = (N - 1 - \mu_1) \int_0^1 r^{N - 3} h_1 u_D' \ dr < 0,
    \end{equation}
    because $\mu_1 < N - 1$, $h_1 > 0$ and $u_D' < 0$ in $(0, 1]$. Hence, in view of \eqref{eq:cone_proof_main_lambda_primeprime}, it holds that
    \begin{equation*}
        \lambda_1|_\M''(0)[\psi_1, \psi_1] < 0,
    \end{equation*}
    which concludes the proof of \textit{(i)}.

    To prove \textit{(ii)}, we recall that the family $\{\psi_j\}_{j \in \N \cup \{0\}}$ is an orthonormal basis for $L^2(D)$. Then any $\eta \in T_0\M$ can be written as
    \begin{equation}
        \label{eq:cone_proof_main_eta}
        \eta = \sum_{j = 1}^\infty (\eta, \psi_j) \psi_j,
    \end{equation}
    where $(\cdot, \cdot)$ denotes the inner product in $L^2(D)$. Without loss of generality, we may assume that $\int_D \eta^2 \ d\sigma = 1$. By Lemma \ref{lem:cone_decomposition_u_eta}, 
    \begin{equation*}
        \wtu_\eta = \sum_{j = 1}^\infty (\eta, \psi_j) \wtu_j.
    \end{equation*}

    Now we show that if $k > j$, then $h_k'(1) \geq h_j'(1)$, with strict inequality if $\mu_k > \mu_j$. To this end, we argue as in the proof of \textit{(i)}, namely, we write the equations for $h_k$ and $h_j$ in Sturm--Liouville form, multiply by $h_j$ and $ h_k$, respectively, integrate by parts, and subtract, to obtain
    \begin{equation*}
        - u_D'(1) (h_k'(1) - h_j'(1)) = (- \mu_j + \mu_k) \int_0^1 r^{N - 3} h_k h_j \ dr \geq 0.
    \end{equation*}
    This fact will be useful together with the following remark: for every $j \in \N$, it holds that
    \begin{equation*}
        \frac{\partial \wtu_j}{\partial \nu} (1, q) = h_j'(1) \psi_j(q).
    \end{equation*}

    We now compute
    \begin{align}
        \lambda_1|_\M''(0)[\eta, \eta]
        & = - 2u_D'(1) \left(\int_D \left(\sum_{j = 1}^\infty (\eta, \psi_j) \psi_j \right)\left(\sum_{k = 1}^\infty (\eta, \psi_k) h_k'(1) \psi_k \right) \ d\sigma + u_D''(1) \int_D \eta^2 \ d\sigma\right) \nonumber \\
        & = - 2u_D'(1) \left(\sum_{j = 1}^\infty (\eta, \psi_j)^2 h_j'(1) + u_D''(1)\right) \nonumber \\
        & \geq - 2u_D'(1) (h_1'(1) + u_D''(1)) \nonumber \\
        &  = 2 (N - 1 - \mu_1) \int_0^1 r^{N - 3} h_1 u_D' \ dr \nonumber \\
        & > 0, \nonumber
    \end{align}
    in view of \eqref{eq:cone_proof_main_main_eq}, and since $\mu_1 > N - 1$ (by assumption), $h_1 > 0$ and $u_D' < 0$ in $(0, 1]$. Since $\eta \in T_0\M$ is arbitrary, this shows that $\OD$ is a stable domain for the first eigenvalue. The proof is complete.
\end{proof}

\begin{remark}
    We refer to \cite[Section 5 and Appendix A]{IacopettiPacellaWeth2022} for examples of cones satisfying the condition $\mu_1 < N - 1$.
\end{remark}

\section{Existence of volume-constrained minimizers}
\label{sec:cone_existence}

Our aim in this section is to understand under which conditions the problem of minimizing $\lambda_1(\Omega)$ admits a solution (among domains of fixed measure). More precisely, for a fixed $m > 0$ we consider
\begin{equation}
    \label{eq:cone_min_A}
    \lambda(\cone, m) \coloneqq \inf\{\lambda_1(\Omega) \ : \ \Omega \subset \cone \text{ is quasi-open}, \ |\Omega| = m\}.
\end{equation}

\begin{remark}
    \label{rem:cone_scaling}
    Since the cone $\cone$ is invariant under scaling, we can invoke the well-known scaling properties of eigenvalues
    \begin{equation}
    \label{eq:cone_monotonicity}
    \lambda_1(t\Omega) = \frac{\lambda_1(\Omega)}{t^2}, \quad t\Omega = \{tx \in \R^N \ : \ x \in \Omega\}
    \end{equation}
    to conclude that either a minimizer exists for any fixed volume or there are no minimizers at all, for any volume bound chosen.
\end{remark}

\begin{remark}
    \label{rem:cone_sphere}
    Suppose that $D$ is a hemisphere, which, without loss of generality, can be thought of as the upper hemisphere
    \begin{equation*}
        \sphereplus = \sphere \cap \{x \in \R^N \ : \ x_N > 0\}.
    \end{equation*}
    In this case, the cone is the full half-space $\R^N_+ = \{x \in \R^N \ : \ x_N > 0\}$, and it is well known that symmetrization techniques are available, which allow us to conclude that the minimum in \eqref{eq:cone_min_A} is attained by any half-ball of measure $m$ with center in $\partial \R^N_+$.
\end{remark}

Next, we show that the minimum $\lambda(\cone, m)$ is indeed positive.

\begin{lemma}
    \label{lem:cone_min_not_zero}
    For any $m > 0$, it holds that 
    \begin{equation*}
        \lambda(\cone, m) > 0.
    \end{equation*}
\end{lemma}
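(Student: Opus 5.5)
We need a uniform lower bound $\lambda_1(\Omega)\ge c(m)>0$ for every quasi-open $\Omega\subset\cone$ with $|\Omega|=m$. The plan is to combine a Sobolev-type inequality on the cone with Hölder's inequality, which is the standard route to a Faber--Krahn-type lower bound without symmetrization. Since $\cone$ is a uniformly Lipschitz set (smooth away from the vertex, and invariant under dilations), it is an extension domain. Concretely, we will use the scale-invariant Sobolev inequality
\begin{equation*}
    \|v\|_{L^{2^*}(\cone)} \le C_S \|\nabla v\|_{L^2(\cone)} \quad \forall v \in H^1(\cone) \text{ with } |\{v \neq 0\}| < \infty,
\end{equation*}
where $2^* = 2N/(N-2)$ and $N\ge 3$.

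To justify this inequality, one option is to invoke the relative isoperimetric/Sobolev inequality on Lipschitz cones. Alternatively, we can argue directly. A Lipschitz extension operator $E\colon H^1(\cone)\to H^1(\R^N)$ exists with $\|\nabla Ev\|_{L^2}\le C\|v\|_{H^1}$, but this is not scale-invariant. We fix it by applying it to the rescaled functions $v_t(x)=v(tx)$: the cone is dilation invariant, so the constants do not depend on $t$. We then let $t\to\infty$, which makes the $L^2$ part of the $H^1$ norm negligible relative to the gradient part. This yields $\|v\|_{2^*}\le C\|\nabla v\|_2 + \varepsilon\|v\|_2$ for arbitrary $\varepsilon$. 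The finite support measure ensures $\|v\|_2<\infty$, so we may pass $\varepsilon\to 0$. I expect this step, producing a constant independent of scale, to be the main technical point; the rest is routine.

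With the inequality in hand, we take $v\in\sobspcap$, so $v=0$ q.e. (hence a.e.) outside $\Omega$. Hölder's inequality on $\Omega$ gives
\begin{equation*}
    \int_\Omega v^2\,dx \le |\Omega|^{2/N}\|v\|_{L^{2^*}}^2 \le C_S^2\, m^{2/N}\int_\Omega |\nabla v|^2\,dx .
\end{equation*}
Taking the infimum in the Rayleigh quotient, this yields $\lambda_1(\Omega)\ge C_S^{-2}m^{-2/N}$ for every admissible $\Omega$. Hence $\lambda(\cone,m)\ge C_S^{-2}m^{-2/N}>0$.

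This bound also agrees with the scaling $\lambda_1(t\Omega)=t^{-2}\lambda_1(\Omega)$ of Remark \ref{rem:cone_scaling}, which serves as a consistency check on the exponent.
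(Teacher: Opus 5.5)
Your proof is correct and follows the same strategy as the paper: a scale-invariant Sobolev-type inequality on $\cone$, combined with H\"older's inequality on a set of measure $m$, gives $\lambda(\cone,m)\ge c\,m^{-2/N}$. The only difference is that the paper applies the $L^1$ Gagliardo--Nirenberg inequality to $u^2$ together with $\int_\cone|\nabla(u^2)|\,dx\le 2\|u\|_2\|\nabla u\|_2$, where you use the $H^1$ Sobolev inequality with exponent $2^*$; your dilation argument removing the lower-order term is a useful detail that the paper leaves implicit when it appeals to the cone condition for a constant depending only on $N$ and $D$.
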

\begin{proof}
    Since $\cone$ satisfies the cone condition, the Gagliardo--Nirenberg inequality holds: for every $u \in H_0^1(\Omega; \cone)$,
    \begin{equation*}
        \int_\cone |\nabla (u^2)| \ dx \geq C(N, D) \left( \int_\cone |u|^{\frac{2N}{N - 1}} \ dx \right)^{\frac{N - 1}{N}},
    \end{equation*}
    where $C(N, D)$ depends only on the dimension $N$, the geometry of the cone $\cone$ (hence on $D$). Next, Hölder's inequality on the right-hand side yields
    \begin{equation*}
        2 \|u\|_{L^2(\cone)} \|\nabla u\|_{L^2(\cone)} \geq C(N, D) \|u\|_2^2 \frac{1}{|\Omega|^{\frac{1}{N}}}.
    \end{equation*}
    Hence
    \begin{equation*}
        \lambda_1(\Omega) = \inf_{\substack{u \in H_0^1(\Omega \cup \Gamma_\Omega) \\ \|u\|_2 = 1}} \int_\cone |\nabla u|^2 \ dx \geq \frac{C(N, D)^2}{4m^{\frac{2}{N}}} > 0,
    \end{equation*}
    for every quasi-open $\Omega \subset \cone$ such that $|\Omega| = m$. Taking the infimum among these sets, we conclude the proof.
\end{proof}

We will now show that $\lambda(\Sigma_\sphereplus, m)$ is an upper bound for $\lambda(\cone, m)$, that is,
\begin{equation*}
    \lambda(\cone, m) \leq \lambda(\Sigma_\sphereplus, m) 
\end{equation*}
for every smooth $D \subset \sphere$. The next step will be to show that whenever the inequality is strict, the infimum in \eqref{eq:cone_min_A} is attained; see Theorem \ref{thm:cone_existence}.

\begin{prop}
    \label{prop:cone_bound_lambda_hemisphere}
    Let $D \subset \sphere$ be a smooth domain and let $m > 0$. Then
    \begin{equation}
        \label{eq:cone_bound_lambda_hemisphere}
        \lambda(\cone, m) \leq \lambda(\Sigma_\sphereplus, m).
    \end{equation}
\end{prop}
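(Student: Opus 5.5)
The plan is to build competitors in $\cone$ by placing a small copy of a half-ball on a smooth part of $\pc$, far from the vertex, where the cone looks like a half-space. By the scaling relation \eqref{eq:cone_monotonicity} and Remark \ref{rem:cone_scaling}, it suffices to show that for every $\varepsilon > 0$ there is a quasi-open (indeed open) $\Omega \subset \cone$ with $|\Omega| = m$ and $\lambda_1(\Omega) \le \lambda(\Sigma_\sphereplus, m) + \varepsilon$. By Remark \ref{rem:cone_sphere}, $\lambda(\Sigma_\sphereplus, m) = \lambda_1(B^+)$, where $B^+$ is a half-ball of measure $m$ centred on $\partial\R^N_+$. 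Its first mixed eigenfunction $u^+$ is the restriction of the radial Dirichlet eigenfunction of the full ball, so it is Lipschitz up to the flat part.

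Fix a point $p \in \partial D$ and consider the points $x_R = R p \in \pc$ with $R \to \infty$. Since $D$ is smooth, $\pc$ is a smooth hypersurface away from $O$. Near $x_R$ its curvatures are $O(1/R)$, so in the ball $B_{\rho}(x_R)$ with $\rho$ fixed, $\pc$ is the graph over the tangent hyperplane $T_R$ of a function $g_R$ with $\|g_R\|_{C^2} \to 0$. Let $\Phi_R$ be the flattening diffeomorphism $(y', y_N) \mapsto (y', y_N + g_R(y'))$, suitably rotated and translated so that it maps $\R^N_+ \cap B_\rho$ onto $\cone \cap B_\rho(x_R)$ near $x_R$. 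Then $\|D\Phi_R - I\|_\infty \to 0$ and $\det D\Phi_R \to 1$ uniformly. Set $\Omega_R = \Phi_R(B^+)$, which is possible once $\rho$ exceeds the radius of $B^+$. Then $\Omega_R \subset \cone$ is open, its Dirichlet part is $\Phi_R$ of the spherical part of $\partial B^+$, and its Neumann part lies on $\pc$.

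Use $v_R = u^+ \circ \Phi_R^{-1}$ as a test function. It lies in $H_0^1(\Omega_R \cup \Gamma_{\Omega_R})$ because it vanishes on the image of the curved boundary, and no condition is imposed on $\pc$. Changing variables gives
\begin{equation*}
\frac{\int_{\Omega_R}|\nabla v_R|^2}{\int_{\Omega_R} v_R^2} = \frac{\int_{B^+} |(D\Phi_R)^{-T}\nabla u^+|^2 \det D\Phi_R}{\int_{B^+} (u^+)^2 \det D\Phi_R} \longrightarrow \lambda_1(B^+).
\end{equation*}
Similarly $|\Omega_R| \to m$. To fix the volume exactly, rescale by $t_R = (m/|\Omega_R|)^{1/N} \to 1$. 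The rescaled set $t_R\Omega_R$ still lies in $\cone$ by scale invariance, and its eigenvalue is $\lambda_1(\Omega_R)/t_R^2 \to \lambda_1(B^+)$ or less. Letting $R \to \infty$ gives \eqref{eq:cone_bound_lambda_hemisphere}.

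The main technical point is the uniform flattening. We need that the second fundamental form of $\pc$ at $Rp$ scales like $1/R$, and that the graph representation holds on a ball of fixed radius $\rho$ uniformly in $R$. Both follow from the homogeneity of $\cone$: $\pc \cap B_\rho(Rp)$ is the dilate by $R$ of $\pc \cap B_{\rho/R}(p)$, and $\partial\cone$ is smooth near $p$. It also has to be checked that $\Omega_R$ stays inside $\cone$ rather than crossing $\pc$ elsewhere. This holds because near $x_R$ the set $\cone \cap B_\rho(x_R)$ is exactly one side of the graph of $g_R$.
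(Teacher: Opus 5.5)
Your proposal is correct and follows essentially the same route as the paper. You fix a point of $\partial D$, move out to infinity along its ray, and write $\pc$ locally as the graph of the rescaled function $g_t(x') = t\,g(x'/t)$, whose gradient tends to zero on fixed balls; you then transplant the half-ball eigenfunction through the shear $(x',x_N)\mapsto(x',x_N+g_t(x'))$. The only small difference is that this shear has Jacobian determinant identically $1$, so the paper gets $|F_t(B^+)| = m$ and $\int v_t^2 = 1$ exactly and needs no final rescaling by $t_R$ (yours is harmless), while you are more explicit than the paper about why the transplanted set stays inside $\cone$.
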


\begin{proof}
    We begin with some geometric considerations regarding the cone $\cone$.
    
    Fix $q \in \partial D \subset \partial \cone \setminus \{O\}$. Since $D$ is smooth, $\partial \cone$ is smooth near $q$, and therefore the tangent plane $T_q\partial \cone$ is well-defined. Let $(x', x_N)$ be the coordinates of points in $\R^N$ with respect to the coordinate system $\{v_1, \ldots, v_{N - 1}, - \nu(q)\}$, where $\{v_i\}_{i = 1, \ldots, N - 1}$ is an orthonormal basis for $T_q \pc$ and $- \nu(q)$ is the inner unit normal vector to $\pc$ at $q$.

    Observe that there exists an open neighborhood $V$ of $q$ in $\pc \setminus \{O\}$ and an open neighborhood $U$ of the origin $0$ in $T_q\pc$ such that $V - q$ is the graph of a smooth function $g: U \to \R$ (see \cite[Page 1034]{IacopettiPacellaWeth2022}). In particular, $\nabla_{x'} g(0) = 0$, where $\nabla_{x'}$ denotes the gradient with respect to the variables $x_1', \ldots, x_{N - 1}'$.

    Since $\cone$ is a cone, we have that for any $t > 0$ it holds
    \begin{equation*}
        T_{tq} \pc = T_q \pc, \quad \nu(tq) = \nu(q),
    \end{equation*}
    and moreover $tV - tq$ is the graph on $T_q \pc$ of the function $g_t: tU \to \R$ given by
    \begin{equation*}
        g_t(x') = t g\left(\frac{x'}{t} \right), \quad x' \in tU.
    \end{equation*}
    By the chain rule and the mean value theorem, for any $x' \in tU$ and $i = 1, \ldots, N - 1$ it holds
    \begin{align}
        \frac{\partial g_t}{\partial x_i'} (x')
        & = \frac{\partial g}{\partial x_i'} \left( \frac{x'}{t} \right) \nonumber \\
        & = \frac{\partial g}{\partial x_i'}(0) + \left(\nabla_{x'} \frac{\partial g}{\partial x_i'}\right) (\eta_i) \cdot \frac{x'}{t} \nonumber \\
        & = \left(\nabla_{x'} \frac{\partial g}{\partial x_i'}\right) (\eta_i) \cdot \frac{x'}{t}, \label{eq:cone_mean_value_theorem}
    \end{align}
    where $\eta$ belongs to the segment joining $0$ and  $\mfrac{x'}{t}$. It follows that for any fixed ball $B_R = B_R(0) \subset T_q \pc$ and all $t$ sufficiently large such that $B_R \subset tU$ it holds
    \begin{equation*}
        \max_{x' \in \overline{B_R}} |\nabla_{x'} g_t(x')| \leq \frac{1}{t} \max_{x' \in \overline U} \sqrt{\sum_{i = 1}^{N - 1} \left|\left(\nabla_{x'} \frac{\partial g}{\partial x_i'}\right)(x')\right|^2} R \leq \frac{C}{t},
    \end{equation*}
    where $C$ does not depend on $t$. In particular, it holds
    \begin{equation}
        \label{eq:cone_nabla_xprime_g_t_vanishes}
        \lim_{t \to + \infty} \max_{x' \in \overline{B_R}} |\nabla_{x'} g_t(x')| = 0.
    \end{equation}

    Next we consider the upper cylinder $C_R^+$ generated by $B_R$ and the epigraph $E_t^+$ of $g_t|_{B_R}$:
    \begin{align}
        & C_R^+ \coloneqq \{(x', x_N) \in \R^N \ : \ x' \in B_R, \ x_N > 0\}, \nonumber \\
        & E_t^+ \coloneqq \{(x', x_N) \in \R^N \ : \ x' \in B_R, \ x_N > g_t(x')\}. \nonumber 
    \end{align}
    We observe that the map $F_t: \overline{C_R^+} \to \overline{E_t^+}$ given by the formula
    \begin{equation*}
        F_t(x', x_N) = (x', x_N + g_t(x')), \quad (x', x_N) \in C_R^+
    \end{equation*}
    is a diffeomorphism, whose Jacobian matrix is given by
    \begin{equation*}
        J F_t(x', x_N) = 
        \begin{bmatrix}
            I_{N - 1} & 0_{N - 1}^T \\
            \nabla_{x'} g_t(x') & 1
        \end{bmatrix}
        ,
    \end{equation*}
    where $I_{N - 1}$ is the identity matrix of order $N - 1$ and $0_{N - 1}^T$ is the transpose of the null vector in $\R^{N - 1}$. In addition, it easily follows that 
    \begin{equation*}
        \det J F_t \equiv 1
    \end{equation*}
    and that the inverse map is given by
    \begin{equation*}
        F_t^{-1}(x', x_N) = (x', x_N - g_t(x')), \quad (x', x_N) \in E_t^+
    \end{equation*}

    We remark that $J F_t$ and $JF_t^{-1}$ do not depend on $x_N$. Moreover, both linear operators $J F_t$ and $JF_t^{-1}$ converge to the identity as $t \to + \infty$: in view of \eqref{eq:cone_nabla_xprime_g_t_vanishes}, it holds that
    \begin{equation}
        \label{eq:cone_JFt_to_id}
        \lim_{t \to + \infty} \|J F_t - I_N\| = \lim_{t \to + \infty} \|J F_t^{-1} - I_N\| = 0.
    \end{equation}

    Now we proceed to prove \eqref{eq:cone_bound_lambda_hemisphere}.

    Let $B^+ \subset \R^N_+$ be the $N$-dimensional half-ball such that $|B^+| = m$, thought of as contained in the upper half-space delimited by $T_q\pc \simeq \R^N_+$. Let $u_{B^+}$ be the first $L^2$-normalized eigenfunction of $B^+$, so that
    \begin{equation*}
        \lambda_1(B^+) = \int_{B^+} |\nabla u_{B^+}|^2 \ dx = \lambda(\Sigma_\sphereplus, m).
    \end{equation*}

    Let $B_R \subset T_q \pc$ be an $(N - 1)$-dimensional ball with a radius $R > 0$ large enough so that
    \begin{equation*}
        B^+ \subset B_R \times [0, + \infty).
    \end{equation*}
    For $t > 0$, we set 
    \begin{equation*}
        v_t = u_{B^+} \circ F_t^{-1}.
    \end{equation*}
    Henceforth we consider $t > 0$ large enough so that, by construction, $v_t \in H_0^1(F_t(B^+); \cone - tq)$.

    Since, as already noted, $\det JF_t \equiv 1$, it easily follows by the theorem of change of variables in the integral that 
    \begin{equation*}
        |F_t(B^+)| = |B^+| = m
    \end{equation*}
    and
    \begin{equation*}
        \int_{F_t(B^+)} v_t^2 \ dx = \int_{B^+} u_{B^+}^2 \ dx = 1.
    \end{equation*}
    Moreover, setting
    \begin{equation*}
        M_t \coloneqq \max_{(x', x_N) \in \overline{E_t^+}} \max_{|\zeta| \leq 1} |JF_t(x', x_N)^{-1} \zeta| = \max_{(x', x_N) \in \overline{E_t^+}} \|J F_t(x', x_N)^{-1}\|
    \end{equation*}
    and observing that 
    \begin{equation*}
        \nabla v_t(x', x_N) = [JF_t(x', x_N)^{-1}]^T \nabla u_{B^+}(F_t^{-1}(x', x_N)) \quad \text{ a.e. in } F_t(B^+),
    \end{equation*}
    we obtain
    \begin{equation*}
        \int_{F_t(B^+)} |\nabla v_t|^2 \ dx \leq M_t^2 \int_{B^+} |\nabla u_{B^+}|^2 \ dx.
    \end{equation*}
    Moreover, since \eqref{eq:cone_JFt_to_id} holds, then 
    \begin{equation*}
        \lim_{t \to + \infty} M_t = 1.
    \end{equation*}

    To conclude, we note that
    \begin{align}
        \lambda(\Sigma_\sphereplus, m) 
        & = \int_{B^+} |\nabla u_{B^+}|^2 \ dx \nonumber \\
        & \geq \frac{1}{M_t^2} \int_{F_t(B^+)} |\nabla v_t|^2 \ dx \nonumber \\
        & \geq \frac{1}{M_t^2} \lambda_1(F_t(B^+)) \nonumber \\
        & \geq \frac{1}{M_t^2} \lambda(\cone, m). \nonumber
    \end{align}
    Passing to the limit as $t \to \infty$ yields \eqref{eq:cone_bound_lambda_hemisphere} and concludes the proof.
\end{proof}

\begin{remark}
    Note that $\lambda_1(F_t(B^+))$ is an eigenvalue computed in the relative setting of the translated cone $\cone - tq$ rather than $\cone$, but the comparison with $\lambda(\cone, m)$ is valid because the problem is invariant if we translate both the ambient cone and the competitor set for $\lambda_1$.
\end{remark}

\begin{theorem}
    \label{thm:cone_existence}
    Let $D \subset \sphere$ be a smooth domain and let $m > 0$. If
    \begin{equation}
        \label{eq:cone_ineq}
        \lambda(\cone, m) < \lambda(\Sigma_\sphereplus, m), 
    \end{equation}
    then $\lambda(\cone, m)$ is attained.
\end{theorem}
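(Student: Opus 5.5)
We follow the concentration--compactness scheme of \cite{IacopettiPacellaWeth2022}, working at the level of eigenfunctions rather than sets. Take a minimizing sequence of quasi-open sets $\Omega_k \subset \cone$ with $|\Omega_k| = m$ and $\lambda_1(\Omega_k) \to \lambda(\cone, m)$. Let $u_k = u_{\Omega_k} \geq 0$ be the $L^2$-normalized eigenfunctions, extended by zero to $\cone$. Then $(u_k)$ is bounded in $H^1(\cone)$, $\|u_k\|_2 = 1$ and $|\{u_k > 0\}| \leq m$.

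We apply Lions' concentration--compactness lemma to the densities $\rho_k = u_k^2$, which have unit mass. There are three cases.

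\emph{Vanishing} is ruled out in the standard way. Using a local Sobolev/Gagliardo--Nirenberg inequality (available since $\cone$ is uniformly Lipschitz), vanishing would force $u_k \to 0$ in $L^p$ for $2 < p < 2^*$. On the other hand, Hölder's inequality with $|\{u_k>0\}| \leq m$ gives $1 = \|u_k\|_2^2 \leq m^{1-2/p}\|u_k\|_p^2$, a contradiction.

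\emph{Dichotomy} is ruled out by strict subadditivity, which comes from scaling (Remark \ref{rem:cone_scaling}). Since $\lambda(\cone, m) = c\, m^{-2/N}$, and since $\lambda(\Sigma_\sphereplus, m)$ scales the same way, we have $\min\{\lambda(\cone,\alpha),\lambda(\cone,m-\alpha)\} > \lambda(\cone, m)$. To use this, split $u_k$ with cutoffs into $u_k^1 + u_k^2$, with supports at diverging distance and the gradient error tending to zero. Let $a_k = \|u_k^1\|_2^2$ and let $\alpha_k, \beta_k$ be the measures of the supports, with $\alpha_k + \beta_k \leq m$. Then
\[
\int |\nabla u_k|^2 \geq \lambda(\cone,\alpha_k)\,a_k + \lambda(\cone,\beta_k)(1-a_k) - o(1),
\]
and the power law for $\lambda(\cone,\cdot)$ gives a strict contradiction as long as $a_k$ stays away from $0$ and $1$.

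\emph{Compactness up to translation} leaves points $y_k \in \overline{\cone}$ such that $u_k^2$ is tight around $y_k$. Passing to a subsequence, there are two subcases. If $(y_k)$ is bounded, then $u_k \to u$ strongly in $L^2(\cone)$ by tightness and the compact local embedding; $u$ is $H^1$-weakly lower semicontinuous with $\|u\|_2 = 1$, and $|\{u>0\}| \leq m$ by Fatou. Then $\Omega = \{u > 0\}$ is quasi-open, and $\lambda_1(\Omega) \leq \int|\nabla u|^2 \leq \lambda(\cone,m)$. Enlarging $\Omega$ to reach measure exactly $m$ does not increase $\lambda_1$ by monotonicity (iv), so the infimum is attained.

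The main obstacle is the second subcase, $|y_k| \to \infty$, where the mass escapes to infinity. There are two geometric possibilities. If $\operatorname{dist}(y_k, \pc) \to \infty$, then after translation the limit lives in all of $\R^N$. Otherwise the translated cones $\cone - y_k$ converge locally (in the sense of the graph maps $F_t$ from the proof of Proposition \ref{prop:cone_bound_lambda_hemisphere}, with $t \sim |y_k|$) to a half-space, because by \eqref{eq:cone_nabla_xprime_g_t_vanishes} the cone flattens at infinity. Transplanting $u_k(\cdot + y_k)$ through $F_t^{-1}$, which preserves volume and $L^2$ norm and distorts gradients by a factor $M_t \to 1$, produces functions in a half-space, or in $\R^N$, with support measure at most $m$, up to an $o(1)$ error coming from the tightness tails. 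We then get
\[
\lambda(\cone,m) = \lim_k \int |\nabla u_k|^2 \geq \lambda(\Sigma_\sphereplus, m),
\]
using the half-space Faber--Krahn inequality (Remark \ref{rem:cone_sphere}) and, in the interior case, the inequality $\lambda_1(\text{ball}) > \lambda_1(\text{half-ball})$. This contradicts \eqref{eq:cone_ineq}. The delicate part is handling the cutoff at scale $R$ around $y_k$ with $R$ fixed while $t \to \infty$, and controlling the lost mass, which is $\varepsilon$-small by tightness. I would handle this by first fixing $\varepsilon$ and $R$, then letting $k \to \infty$, and finally $\varepsilon \to 0$.
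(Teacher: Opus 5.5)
Your proposal is correct and follows essentially the same route as the paper. Vanishing is excluded by Lions' lemma plus Hölder, and dichotomy by the strict volume-monotonicity that scaling gives; in the compactness case, escape to infinity is excluded by Faber--Krahn in the interior or by flattening the cone onto a half-space near the boundary, followed by tightness and lower semicontinuity. The only cosmetic differences are that you phrase the dichotomy exclusion through the explicit law $\lambda(\Sigma_D,s)=c\,s^{-2/N}$, and that you reach measure $m$ by enlarging $\{u>0\}$ rather than by the paper's rescaling argument (which additionally shows $|\{u>0\}|=m$).
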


\begin{proof}
    Let $\{\Omega_n\}_{n \in \mathbb N}$ be a minimizing sequence for \eqref{eq:cone_min_A}, that is, a sequence of quasi-open sets such that $|\Omega_n| = m$ for every $n \in \mathbb{N}$ and 
    \begin{equation*}
        \lambda_1(\Omega_n) = \int_{\Omega_n} |\nabla u_n|^2 \ dx \to \lambda(\cone, m) 
    \end{equation*}
    as $n \to \infty$, where $u_n \in H_0^1(\Omega_n; \cone)$ is the corresponding positive $L^2$-normalized eigenfunction (that is, $\|u_n\|_2 = 1$ for every $n \in \N$).

    By definition, the sequence $\{u_n\}_{n \in \N}$ is bounded in $H^1(\cone)$. Since $\|u_n\|_2 = 1$ for every $n \in \N$, we can apply a slight modification of the concentration-compactness lemma (\cite[Lemma III.1]{Lions1984}) to account for working in the Sobolev space $H^1(\cone)$ instead of $H^1(\R^N)$. More precisely, it holds that there exists a subsequence $\{u_{n_k}\}_{k \in \N}$ satisfying one of the following possibilities:
    \begin{enumerate}[label=(\roman*)]
        \item Compactness: there exists a sequence of points $y_k \in \cone$ such that 
        \begin{equation}
            \label{eq:cone_compactness}
            \forall \varepsilon > 0 \ \exists R > 0 \ \text{such that} \ \int_{B_R(y_k) \cap \cone} u_{n_k}^2 \ dx \geq 1 - \varepsilon \ \forall k \in \N;
        \end{equation}

        \item Vanishing:
        \begin{equation}
            \label{eq:cone_vanishing}
            \lim_{k \to \infty} \sup_{y \in \cone} \int_{B_R(y) \cap \cone} u_{n_k}^2 \ dx = 0 \quad \forall R > 0;
        \end{equation}

        \item Dichotomy: there exists $\alpha \in (0, 1)$ such that for all $\varepsilon > 0$ there exist $k_0 \geq 1$ and two sequences $\{u_{1, k}\}_{k \in \N} \subset H^1(\cone)$, $\{u_{2, k}\}_{k \in \N} \subset H^1(\cone)$, bounded in $H^1(\cone)$, such that for all $k \geq k_0$ it holds
        \begin{align}
            & \|u_{n_k} - u_{1, k} - u_{2, k}\|_{L^2(\cone)} \leq 4\varepsilon, \nonumber \\
            & \left|\int_\cone u_{1, k}^2 \ dx - \alpha\right| \leq \varepsilon, \quad \left|\int_\cone u_{2, k}^2 \ dx - (1 - \alpha) \right| \leq \varepsilon, \nonumber \\
            & \text{dist} (\supp u_{1, k}, \supp u_{2, k}) \to + \infty \quad \text{ as } k \to \infty, \nonumber \\
            & \liminf_{k \to \infty} \int_\cone (|\nabla u_{n_k}|^2 - |\nabla u_{1, k}|^2 - |\nabla u_{2, k}|^2) \ dx \geq 0. \nonumber
        \end{align}
    \end{enumerate}

    We now show that, under our assumptions, only the compactness situation may occur. For simplicity, we denote the subsequence $\{u_{n_k}\}_{k \in \N}$ by $\{u_k\}_{k \in \N}$, as well as $\Omega_k = \{u_k > 0\}$.

    To rule out the vanishing possibility, we proceed by contradiction. Hence we assume that \eqref{eq:cone_vanishing} holds. Then it follows by \cite[Lemma I.1]{Lions1984CCb} (see also \cite[Lemma 1.21]{Willem1996}), whose proof can be easily adapted to the case of the Sobolev space $H^1(\cone)$ (since $\cone$ satisfies the cone condition), that $u_k \to 0$ strongly in $L^p(\cone)$ as $k \to \infty$ for every $p \in (2, 2^*)$. Since $|\Omega_k| = m$, then Hölder's inequality would imply
    \begin{equation*}
        1 = \|u_k\|_2 \leq |\{u_k > 0\}|^{\frac{1}{2} - \frac{1}{p}} \|u_k\|_p \to 0 \quad \text{ as } k \to \infty,
    \end{equation*}
    absurd. Hence vanishing does not occur.

    With the aim of ruling out dichotomy, we observe that a diagonal argument together with the construction performed in the proof of \cite[Lemma III.1]{Lions1984} allows us to assume that the sequences $\{u_{1, k}\}_{k \in \N}$ and $\{u_{2, k}\}_{k \in \N}$ satisfy 
    \begin{align}
        & \supp u_{1, k} \cup \supp u_{2, k} \subseteq \supp u_{k} \quad \forall k \in \N \nonumber \\
        & \|u_{k} - u_{1, k} - u_{2, k}\|_{L^2(\cone)} \to 0 \quad \text{ as } k \to \infty, \nonumber \\
        & \int_\cone u_{1, k}^2 \ dx \to \alpha, \quad \int_\cone u_{2, k}^2 \ dx \to (1 - \alpha), \quad \text{ as } k \to \infty, \label{eq:cone_dichotomy} \\
        & \text{dist} (\supp u_{1, k}, \supp u_{2, k}) \to + \infty \quad \text{ as } k \to \infty, \nonumber \\
        & \liminf_{k \to \infty} \int_\cone (|\nabla u_{k}|^2 - |\nabla u_{1, k}|^2 - |\nabla u_{2, k}|^2) \ dx \geq 0. \nonumber
    \end{align}
    Moreover, the functions $u_{1, k}$ and $u_{2, k}$ can be assumed to be nonnegative for every $k \in \N$. Let us set $\Omega_{i, k} = \{u_{i, k} > 0\}$, for $i = 1, 2$, and define
    \begin{equation*}
        c_i \coloneqq \liminf_{k \to \infty} |\Omega_{i, k}|, \quad i = 1, 2.
    \end{equation*}
    We claim that $c_i > 0$ for $i = 1, 2$. Indeed, combining Hölder's and Sobolev inequalities, we obtain:
    \begin{align}
        \int_{\cone} u_{i, k}^2 \ dx 
        & \leq \left(\int_\cone u_{i, k}^{2^*} \right)^{\frac{2}{2^*}} |\Omega_{i, k}|^{\frac{2}{N}} \nonumber \\
        & \leq C(N, \cone) |\Omega_{i, k}|^{\frac{2}{N}} \int_\cone |\nabla u_{i, k}|^2 \ dx. \nonumber
    \end{align}
    Should we have $c_i = 0$, we would be in contradiction with \eqref{eq:cone_dichotomy}. Next, arguing as in \cite[Section 3.3]{Bucur2000}, we obtain that $u_{1, k} + u_{2, k}$ is a minimizing sequence for \eqref{eq:cone_min_A}: since $u_{i, k} \in H_0^1(\Omega_k; \cone)$, $i = 1, 2$, for every $k \in \N$, we have
    \begin{align}
        \int_\cone |\nabla u_k - \nabla u_{1, k} - \nabla u_{2, k}|^2 \ dx 
        & = \int_\cone |\nabla u_k|^2 - 2 \int_\cone \nabla u_k \nabla (u_{1, k} + u_{2, k}) \ dx \nonumber \\
        & \quad + \int_\cone |\nabla (u_{1, k} + u_{2, k})|^2 \ dx \nonumber \\
        & = 2 \int_\cone \lambda_1(\Omega_k) u_k (u_k - u_{1, k} - u_{2, k}) \ dx \nonumber \\
        & \quad - \int_\cone \left(|\nabla u_k|^2 - |\nabla (u_{1, k} + u_{2, k})|^2\right) \ dx \nonumber \\
        & \leq C \|u_k\|_{L^2(\cone)}\|u_k - u_{1, k} - u_{2, k}\|_{L^2(\cone)} \nonumber \\
        & \quad - \int_\cone \left(|\nabla u_k|^2 - |\nabla (u_{1, k} + u_{2, k})|^2\right) \ dx, \nonumber 
    \end{align}    
    where $C > 0$ is a fixed constant that does not depend on $k$ (since the sequence $\{\lambda_1(\Omega_k)\}_{k \in \N}$ is bounded). Since, by \eqref{eq:cone_dichotomy},
    \begin{equation*}
        0 \leq \lim_{k \to \infty} C \|u_k\|_{L^2(\cone)} \|u_k - u_{1, k} - u_{2, k}\|_{L^2(\cone)} = 0
    \end{equation*}
    and
    \begin{equation*}
        \limsup_{k \to \infty} \left(- \int_\cone \left(|\nabla u_k|^2 - |\nabla (u_{1, k} + u_{2, k})|^2\right) \right) \leq 0,
    \end{equation*}
    it follows that
    \begin{equation*}
        \frac{\displaystyle \int_\cone |\nabla (u_{1, k} + u_{2, k})|^2 \ dx}{\displaystyle \int_\cone (u_{1, k}^2 + u_{2, k}^2)} \to \lambda(\cone, m) \quad \text{ as } k \to \infty.
    \end{equation*}
    Now, recalling the inequality
    \begin{equation*}
        \frac{a + b}{c + d} \geq \min\left\{\frac{a}{c}, \frac{b}{d}\right\}, \quad \forall a, b, c, d > 0,
    \end{equation*}
    it follows that one of the sequences $\Omega_{i, k}$, $i = 1, 2$, is minimizing for $\eqref{eq:cone_min_A}$. Since $|\Omega_{i, k}| < m - \varepsilon$ for some $\varepsilon > 0$, the strict monotonicity \eqref{eq:cone_monotonicity} would allow us to find a quasi-open set $\Omega'$ of measure $m$ such that $\lambda_1(\Omega') < \lambda(\cone, m)$, a contradiction.

    We have thus proved that the vanishing and dichotomy cases cannot occur, so that compactness holds for our sequence $\{u_k\}_{k \in \N}$. Roughly speaking, \eqref{eq:cone_compactness} means that the mass of $u_k$ concentrates in a ball around $y_k$. As the next step shows, the same is true for the eigenvalue as well (in other words, the tails of $\Omega_k$ do not play an important role). 

    Precisely, we claim that there exists a constant $\Lambda > 0$ such that for every fixed $\varepsilon > 0$ there exist $R_\varepsilon > 1$ and $k_\varepsilon \in \N$, both depending only on $\varepsilon$, such that
    \begin{equation}
        \label{eq:cone_concentration_eigenvalue}
        \lambda_1(\Omega_k) \geq \lambda_1(B_{2R}(y_k) \cap \Omega_k) - \Lambda \varepsilon \quad \forall k \geq k_\varepsilon, \ \forall R \geq R_\varepsilon.
    \end{equation}
    To prove the claim, let us fix $\varepsilon > 0$ and let $R > 0$ be the corresponding radius given by the concentration-compactness theorem such that \eqref{eq:cone_compactness} holds. Let $\varphi$ be a fixed cut-off function with the following properties:
    \begin{align}
        & \varphi \in C_c^\infty(\R^N) \nonumber \\
        & 0 \leq \varphi \leq 1 \text{ in } \R^N, \nonumber \\
        & \varphi \equiv 1 \text{ in } B_R(O), \nonumber \\
        & \varphi \equiv 0 \text{ in } \R^N \setminus B_{2R}(O), \nonumber \\
        & |\nabla \varphi| \leq \frac{C_0}{R} \text{ in } \R^N, \nonumber 
    \end{align}
    where $C_0$ is a constant independent of $R$. Let $\varphi_k$ be the translation of $\varphi$ with center at $y_k$, that is, 
    \begin{equation*}
    \varphi_k(x) = \varphi(x - y_k), \quad x \in \R^N.
    \end{equation*}
    Now we note that
    \begin{align}
        \int_\cone |\nabla u_k|^2 \ dx
        & \geq \int_\cone |\nabla u_k|^2 \varphi_k^2 \ dx \nonumber \\
        & = \int_\cone |\nabla (u_k \varphi_k)|^2 \ dx - 2 \int_\cone (u_k \varphi_k) (\nabla u_k \cdot \nabla \varphi_k) \ dx - \int_\cone u_k^2 |\nabla \varphi_k|^2 \ dx. \nonumber
    \end{align}
    By Hölder's inequality, we have
    \begin{align}
        \int_\cone (u_k \varphi_k) (\nabla u_k \cdot \nabla \varphi_k) \ dx
        & \leq \|\varphi_k\|_\infty \|\nabla \varphi_k\|_\infty \int_\cone |u_k| |\nabla u_k| \ dx \nonumber \\
        & \leq \frac{C_0}{R} \|u_k\|_{L^2(\cone)} \|\nabla u_k\|_{L^2(\cone)} \nonumber \\
        & \leq \frac{C_0 C_1}{R}, \nonumber
    \end{align}
    where also $C_1$ is independent of $k$ and $R$. Similarly,
    \begin{equation*}
        \int_\cone u_k^2 |\nabla \varphi_k|^2 \ dx \leq \frac{C_0^2}{R^2}.
    \end{equation*}
    Without loss of generality, we can assume that $R > 1$. Hence
    \begin{equation*}
        \int_\cone |\nabla u_k|^2 \ dx \geq \int_\cone |\nabla (u_k \varphi_k)|^2 \ dx - \frac{C_2}{R},
    \end{equation*}
    where $C_2 > 0$ is independent of $k$ and $R$.
    Next, since $\varphi_k^2 \equiv 1$ in $B_R(y_k)$ and in view of \eqref{eq:cone_compactness}, we have:
    \begin{align}
        \int_\cone u_k^2 \ dx 
        & = \int_\cone u_k^2 \varphi_k^2 \ dx + \underbrace{\int_{\cone \cap B_R(y_k)} u_k^2 (1 - \varphi_k^2) \ dx}_{ = 0} + \int_{\cone \setminus B_R(y_k)} u_k^2 (1- \varphi_k^2) \ dx \nonumber \\
        & \leq \int_\cone (u_k \varphi_k)^2 \ dx + \varepsilon. \nonumber
    \end{align}
    Since $u_k \varphi_k \in H_0^1(B_{2R}(y_k) \cap \Omega_k; \cone)$, it is an admissible test function for the Rayleigh quotient. We have:
    \begin{align}
        \lambda_1(B_{2R}(y_k) \cap \Omega_k)
        & \leq \frac{\displaystyle \int_\cone |\nabla (u_k \varphi_k)|^2 \ dx}{\displaystyle \int_\cone (u_k \varphi_k)^2 \ dx} \nonumber \\
        & \leq \frac{\displaystyle \int_\cone |\nabla u_k|^2 \ dx + \frac{C_2}{R}}{1 - \varepsilon}. \nonumber 
    \end{align}
    Since $C_2$ is independent of $k$ and $R$, we can assume that $\mfrac{C_2}{R} < \varepsilon$ (up to taking a larger $R$). Then, by a simple Taylor series expansion, we obtain
    \begin{align}
        \lambda_1(B_{2R}(y_k) \cap \Omega_k) \leq \lambda_1(\Omega_k) + (\lambda_1(\Omega_k) + 1) \varepsilon + o(\varepsilon). \nonumber
    \end{align}
    Choosing $\Lambda = \lambda(\cone, m) + K$, for a large constant $K > 0$, we readily obtain \eqref{eq:cone_concentration_eigenvalue}.

    Our next step is to show that the sequence of points $\{y_k\}_{k \in \N} \subset \cone$ is bounded. The idea is that if we assume that the sequence is unbounded, using the previous step and the fact that the cone ``flattens out" at infinity, we converge in some sense to the eigenvalue of a half-ball, contradicting the assumption \eqref{eq:cone_ineq}.

    For the sake of contradiction, let us assume that there exists a subsequence (still indexed by $k$) such that
    \begin{equation*}
        |y_k| \to + \infty \quad \text{ as } k \to \infty.
    \end{equation*}
    By our key assumption \eqref{eq:cone_ineq}, there exists $\varepsilon > 0$ sufficiently small such that
    \begin{equation}
        \label{eq:cone_contradiction_assumption_y_k}
        \lambda(\cone, m) + \Lambda \varepsilon < \lambda(\Sigma_\sphereplus, m),
    \end{equation}
    where $\Lambda$ is the constant from \eqref{eq:cone_concentration_eigenvalue}. Moreover, by \eqref{eq:cone_concentration_eigenvalue} there exists $R_\varepsilon > 0$ sufficiently large, depending only on $\varepsilon$, such that for all sufficiently large $k$ it holds
    \begin{equation}
        \label{eq:cone_ineq_y_k}
        \lambda_1(\Omega_k) \geq \lambda_1(\BTR \cap \Omega_k) - \Lambda \varepsilon. 
    \end{equation}
    Note that there exists $\bar k \in \N$ such that $d(y_k, \pc) \leq 2R$ for all $k \geq \bar k$, because otherwise we would have
    \begin{equation*}
        \lambda_1(B_{2R}(y_k) \cap \Omega_k) \geq \lambda_1(B^m) > \lambda(\Sigma_\sphereplus, m), 
    \end{equation*}
    where $B^m$ is the ball of measure $m$, contradicting \eqref{eq:cone_ineq_y_k}-\eqref{eq:cone_concentration_eigenvalue}. Therefore there exists a sequence $\{z_k\}_{k \in \N} \subset \pc$ such that for all $k \geq \bar k$ it holds
    \begin{align}
        & z_k \in \left(\overline{\BTR \cap \Omega_k} \cap \pc\right), \nonumber \\
        & \BTR \subset \BFR. \nonumber
    \end{align}
    Since the functional $\Omega \mapsto \lambda_1(\Omega)$ is monotone with respect to inclusion and $H_0^1(\BTR \cap \Omega_k; \cone) \subset H_0^1(\BFR \cap \Omega_k; \cone)$, we have that
    \begin{equation}
        \label{eq:cone_ineq_monot}
        \lambda_1(\BTR \cap \Omega_k) \geq \lambda_1(\BFR \cap \Omega_k) \quad \forall k \geq \bar k.
    \end{equation}
    We now show that, up to a further subsequence still indexed by $k$, it holds
    \begin{equation}
        \label{eq:cone_claim}
        \lambda_1(\BFR \cap \Omega_k) \geq \lambda(\Sigma_\sphereplus, m) +  o(1) \quad \text{ as } k \to \infty. 
    \end{equation}
    To prove the claim \eqref{eq:cone_claim}, we use the geometric construction and considerations presented in the proof of Proposition \ref{prop:cone_bound_lambda_hemisphere}. We set
    \begin{equation*}
        q_k \coloneqq \frac{z_k}{|z_k|} \in \partial D, \quad k \in \N.
    \end{equation*}
    By the compactness of $\partial D$ in $\sphere$, there exists $q \in \partial D$ such that $q_k \to q$ (up to a subsequence still indexed by $k$), where the convergence is to be understood with respect to the geodesic distance on $\sphere$. As in the proof of Proposition \ref{prop:cone_bound_lambda_hemisphere}, we take $U \subset T_q\pc$ to be a convex neighborhood of the origin in the tangent plane and a map $g: \overline{U} \to \R$ such that $g(0) = 0$, $\nabla_{x'} g(0) = 0$, and the graph of $g$ on $U$ is a neighborhood $V$ of $q$ on $\pc$. As before, we use the coordinates $(x', x_N)$ with respect to the orthonormal frame $\{v_1, \ldots, v_{N - 1}, - \nu(q)\}$, where $\{v_i\}_{i = 1, \ldots, N - 1}$ is an orthonormal frame of $T_q\pc$. Let $\Pi: \R^N \to T_q\pc$ be the orthogonal projection operator. Since $q_k \to q$, then for all sufficiently large $k$ we have
    \begin{align}
        & q_k \in V, \nonumber \\
        & \Pi(q_k - q) \in U, \nonumber
    \end{align}
    and moreover
    \begin{equation*}
        \Pi(q_k - q) \to 0 \quad \text{ as } k \to \infty.
    \end{equation*}
    For each $k \in \N$, let $\bar x_k'$ be the coordinates of $\Pi(q_k - q)$:
    \begin{equation*}
        \bar x_k' = (\bar x_{1, k}', \ldots, \bar x_{N - 1, k}') = \Pi(q_k - q) \in T_q\pc. 
    \end{equation*}
    In addition, let us set, for each $k \in \N$, the translated domain
    \begin{equation*}
        U_k \coloneqq U - \bar x_k' \nonumber
    \end{equation*}
    and the function $g_k: U_k \to \R$ given by
    \begin{equation*}
        g_k(x') = g(x' + \bar x_k') - g(\bar x_k'), \quad x' \in U_k.
    \end{equation*}
    It is clear that $V - q_k$ is the graph of $g_k$. Now, by construction, $\bar x_k' \to 0$ in $T_q\pc$ as $k \to \infty$, and therefore there exists a ball $B_{\bar R} = B_{\bar R}(0)$ in $T_q\pc$ such that $B_{\bar R} \subset U_k$ for all sufficiently large $k$. Let us set $t_k = |z_k|$ for $k \in \N$. By definition, $t_k \to + \infty$ as $k \to \infty$, and therefore $t_k U_k$ covers $T_q\pc$ as $k \to \infty$. Next, for $k \in \N$ we define the scaled map
    \begin{align}
        h_k(x') = t_k g_k \left(\frac{x'}{t_k}\right) = t_k \left(g\left(\frac{x'}{t_k} + \bar x_k' \right) - g(\bar x_k') \right), \quad x' \in t_k U_k. \nonumber
    \end{align}
    Arguing as in \eqref{eq:cone_mean_value_theorem}, we obtain
    \begin{align}
        \frac{\partial h_k}{\partial x_i'}(x')
        & = \frac{\partial g}{\partial x_i'}\left(\frac{x'}{t_k} + \bar x_k' \right) \nonumber \\
        & = \frac{\partial g}{\partial x_i'}(\bar x_k') +  \left(\nabla_{x'} \frac{\partial g}{\partial x_i'} \right) (\eta_k + \bar x_k') \cdot \frac{x'}{t_k}, \nonumber
    \end{align}
    where $\eta_k$ belongs to the segment joining $\bar x_k'$ and $\bar x_k' + \mfrac{x'}{t_k}$, for every $k \in \N$. Consider a ball $\btr$ in $T_q\pc$, where $\widetilde R$ is to be chosen later and independently of $k$. Since $t_k U_k$ covers $T_q\pc$ as $k \to \infty$, then $\btr \subset t_k U_k$ for all sufficiently large $k$. Now, since $g$ is smooth, $\bar x_k' \to 0$ in $T_q\pc$ as $k \to \infty$ and $\nabla_{x'} g(0) = 0$, arguing as in \eqref{eq:cone_nabla_xprime_g_t_vanishes} we obtain that
    \begin{equation*}
        \lim_{k \to \infty} \max_{x' \in \overline{\btr}} |\nabla_{x'} h_k(x')| = 0.
    \end{equation*}
    Recalling the notations
    \begin{align}
        & C_{\widetilde R}^+ \coloneqq \{(x', x_N) \in \R^N \ : \ x' \in \btr, \ x_N > 0\}, \nonumber \\
        & E_k^+ \coloneqq \{(x', x_N) \in \R^N \ : \ x' \in \btr, \ x_N > h_k(x')\}, \nonumber 
    \end{align}
    we consider the diffeomorphism 
    \begin{equation*}
        F_k: \overline{C_{\widetilde R}^+} \to \overline{E_k^+}
    \end{equation*}
    given by
    \begin{equation*}
        F_k(x', x_N) = (x', x_N + h_k(x')), \quad (x', x_N) \in \overline{C_{\widetilde R}^+}.
    \end{equation*}
    Note that for all sufficiently large $k$ it holds
    \begin{equation*}
        \left(\overline{\BFR \cap \Omega_k} \cap \pc\right) - z_k \subset t_k(V - q_k).
    \end{equation*}
    For simplicity, we set
    \begin{equation*}
        \ork \coloneqq (\BFR \cap \Omega_k) - z_k.
    \end{equation*}
    Since $\ork$ is uniformly bounded and $t_kU_k$ covers $T_q\pc$ as $k \to \infty$, then there exists $\widetilde R > 0$ independent of $k$ such that
    \begin{equation*}
        F_k^{-1}(\overline{\ork}) \subset \btr \times [0, + \infty).
    \end{equation*}
    We denote by $\tuk \in H_0^1(\ork; \cone - z_k)$ the first positive $L^2$-normalized eigenfunction of $\ork$, and set $\TUK \coloneqq \tuk \circ F_k$. It then readily follows that $\TUK \in H_0^1(F_k^{-1}(\ork); \R^N_+)$. As in the proof of Proposition \ref{prop:cone_bound_lambda_hemisphere}, we have that
    \begin{align}
        & |F_k^{-1}(\ork)| = |\ork| \leq m, \nonumber \\
        & \int_{\ork} \tuk^2 \ dx = \int_{F_k^{-1}(\ork)} \TUK^2 \ dx = 1, \nonumber \\
        & \int_{F_k^{-1}(\ork)} |\nabla \TUK|^2 \ dx \leq M_k^2 \int_{\ork} |\nabla \tuk|^2 \ dx, \nonumber
    \end{align}
    where
    \begin{equation*}
        M_k = \max_{(x', x_N) \in \overline{C_{\widetilde R}^+}} \max_{|\zeta| \leq 1} |JF_k(x', x_N) \zeta| \to 1 \quad \text{ as } k \to \infty.
    \end{equation*}
    Now, in view of Remark \ref{rem:cone_sphere} and the monotonicity of the eigenvalue with respect to a scaling of the domain, we have that
    \begin{align}
        \lambda_1(\ork)
        & \geq \frac{1}{M_k^2} \int_{F_k^{-1}(\ork)} |\nabla \TUK|^2 \ dx \nonumber \\
        & \geq \frac{1}{1 + o(1)} \lambda_1(F_k^{-1}(\ork)) \nonumber \\
        & \geq \frac{1}{1 + o(1)} \lambda(\Sigma_\sphereplus, |\ork|) \nonumber \\
        & \geq \frac{1}{1 + o(1)} \lambda(\Sigma_\sphereplus, m) \nonumber \\
        & = \lambda(\Sigma_\sphereplus, m) + o(1) \nonumber
    \end{align}    
    as $k \to \infty$, which is precisely \eqref{eq:cone_claim}. Now, combining \eqref{eq:cone_ineq_y_k}, \eqref{eq:cone_ineq_monot}, \eqref{eq:cone_claim} we obtain
    \begin{equation*}
        \lambda_1(\Omega_k) \geq \lambda(\Sigma_\sphereplus, m) + o(1) - \Lambda \varepsilon \quad \text{ as } k \to \infty,
    \end{equation*}
    contradicting \eqref{eq:cone_contradiction_assumption_y_k}. It therefore follows that the sequence $\{y_k\}_{k \in \N} \subset \cone$ is bounded.

    The boundedness of the sequence $\{y_k\}_{k \in \N}$ allows us to prove that the sequence of eigenfunctions $\{u_k\}_{k \in \N}$ admits a strongly convergent subsequence in $L^2(\cone)$.

    To the end of proving this claim, we begin by showing that 
    \begin{equation}
        \label{eq:cone_exterior_vanishing}
        \lim_{R \to \infty} \sup_{k \in \N} \int_{\cone \setminus B_R(O)} u_k^2 \ dx = 0.
    \end{equation}
    We proceed by contradiction. Should \eqref{eq:cone_exterior_vanishing} not hold, there would exist $\varepsilon' > 0$ and a sequence $\{R_j\}_{j \in \N} \subset \R^+$, with $R_j \to + \infty$ as $j \to \infty$, and a subsequence $\{u_j\}_{j \in \N}$ of eigenfunctions such that
    \begin{equation*}
        \int_{\cone \setminus B_{R_j}(O)} u_j^2 \ dx \geq \frac{\varepsilon'}{2} \quad \forall j \in \N.
    \end{equation*}
    On the other hand, recalling \eqref{eq:cone_compactness}, if we take $\varepsilon = \mfrac{\varepsilon'}{4}$ and $R_\varepsilon$ be the corresponding radius given by \eqref{eq:cone_compactness}, then for all $k \in \N$ we have
    \begin{equation*}
        \int_{B_{R_\varepsilon}(y_k) \cap \cone} u_k^2 \ dx \geq 1 - \frac{\varepsilon'}{4}.
    \end{equation*}
    Now we make use of the boundedness of the sequence $\{y_k\}_{k \in \N}$. It implies that there exists $\bar R > 0$ independent of $k$ such that $B_{R_\varepsilon}(y_k) \subset B_{\bar R}(O)$ for all $k \in \N$. Hence
    \begin{equation*}
        \int_{B_{\bar R}(O) \cap \cone} u_k^2 \ dx \geq \int_{B_{R_\varepsilon}(y_k) \cap \cone} u_k^2 \ dx \geq 1 - \frac{\varepsilon'}{4}
    \end{equation*}
    for all $k$ sufficiently large. However, since $R_j \to + \infty$, there exists $j_0 \in \N$ such that $R_j \geq \bar R$ for all $j \geq j_0$, and the corresponding eigenfunctions satisfy
    \begin{align}
        \int_\cone u_j^2 \ dx 
        & = \int_{B_{R_j}(O) \cap \cone} u_j^2 \ dx + \int_{\cone \setminus B_{R_j}(O)} u_j^2 \ dx \nonumber \\
        & \geq 1 - \frac{\varepsilon'}{4} + \frac{\varepsilon'}{2} \nonumber \\
        & > 1, \nonumber
    \end{align}
    a contradiction with the constraint
    \begin{equation*}
        \int_\cone u_k^2 \ dx = 1 \quad \forall k \in \N. 
    \end{equation*}
    This proves \eqref{eq:cone_exterior_vanishing}, which in turn implies that for any $\varepsilon > 0$ there exists $R > 0$ such that
    \begin{equation*}
        \int_{\cone \setminus B_R(O)} u_k^2 \ dx < \varepsilon \quad \forall k \in \N.
    \end{equation*}
    Consider the truncated functions
    \begin{equation*}
        f_k \coloneqq \chi_{B_R(O)} u_k, \quad k \in \N,
    \end{equation*}
    where $\chi_E$ denotes, as usual, the characteristic function of the set $E$. We readily obtain that the sequence $\{f_k\}_{k \in \N}$ is bounded in $H^1(B_R(O) \cap \cone)$, because $\{u_k\}_{k \in \N}$ is bounded in $H^1(\cone)$, and therefore there exists $f \in H^1(B_R(O) \cap \cone)$ such that
    \begin{equation*}
        f_k \rightharpoonup f \quad \text{ weakly in } H^1(B_R(O) \cap \cone) \quad \text{ as } k \to \infty.
    \end{equation*} 
    In addition, it is also immediate from \eqref{eq:cone_exterior_vanishing} that
    \begin{equation}
        \label{eq:cone_uk_vk}
        \|u_k - f_k\|_{L^2(\cone)} \leq \varepsilon \quad \forall k \in \N.
    \end{equation}
    Moreover, since $B_R(O) \cap \cone$ is bounded and satisfies the cone condition, the embedding
    \begin{equation*}
        H^1(B_R(O) \cap \cone) \hookrightarrow L^2(B_R(O) \cap \cone)
    \end{equation*}
    is compact, and therefore $f_k \to f$ strongly in $L^2(B_R(O) \cap \cone)$, and hence in $L^2(\cone)$ (up to a subsequence). This relative compactness of $\{f_k\}_{k \in \N}$, together with \eqref{eq:cone_uk_vk}, imply that the sequence $\{u_k\}_{k \in \N}$ can be covered with finitely many balls of radius $\varepsilon$. Since $\varepsilon$ is arbitrary, it follows (by definition) that $\{u_k\}_{k \in \N}$ is totally bounded in $L^2(\cone)$. Since $L^2(\cone)$ is a Banach space, then it follows that $\{u_k\}_{k \in \N}$ is relatively compact (\cite[Chapter 7]{Munkres2000}), i.e., there exists $u \in L^2(\cone)$ such that
    \begin{equation*}
        u_k \to u \quad \text{ strongly in } L^2(\cone) \quad \text{ as } k \to \infty. 
    \end{equation*}

    We now come to the conclusion of the proof. Since the sequence $\{u_k\}_{k \in \N}$ is bounded in $H^1(\cone)$, it converges weakly in this space (up to a subsequence). By the previous step, $u_k \to u$ strongly in $L^2(\cone)$, hence $u$ is precisely the weak limit of $u_k$ in $H^1(\cone)$ as $k \to \infty$, since the embedding $H^1(\cone) \hookrightarrow L^2(\cone)$ is a continuous linear map. It is clear that $u \geq 0$ in $\cone$. We set
    \begin{equation*}
        \Omega \coloneqq \{u > 0\}.
    \end{equation*}
    This set is a quasi-open subset of $\cone$ and, by Fatou's Lemma,
    \begin{equation*}
        |\Omega| = \int_\cone \chi_{\{u > 0\}} \leq \liminf_{k \to \infty} \int_\cone \chi_{\Omega_k} \ dx = m.
    \end{equation*}
    In addition,
    \begin{equation*}
        \|u\|_{H^1(\cone)} \leq \liminf_{k \to \infty} \|u_k\|_{H^1(\cone)}, 
    \end{equation*}
    which, combined with $\|u_k\|_{L^2(\cone)} \to \|u\|_{L^2(\cone)}$ as $k \to \infty$ leads to
    \begin{equation*}
        \lambda_1(\Omega) \leq \frac{\displaystyle \int_\Omega |\nabla u|^2 \ dx}{\displaystyle \int_\Omega u^2 \ dx} \leq \liminf_{k \to \infty} \frac{\displaystyle \int_\cone |\nabla u_k|^2 \ dx}{\displaystyle \int_\cone u_k^2 \ dx} = \liminf_{k \to \infty} \lambda_1(\Omega_k) = \lambda(\cone, m).
    \end{equation*}
    Should $|\Omega| < m$, we would be able to consider a suitable scaling $t\Omega$ such that $|t\Omega| = m$, in which case we would have $\lambda_1(t\Omega) < \lambda(\cone, m)$, absurd. Hence $|\Omega| = m$ and
    \begin{equation*}
        \lambda_1(\Omega) = \lambda(\cone, m),
    \end{equation*}
    which completes the proof.
\end{proof}

To conclude, we show that the assumption \eqref{eq:cone_ineq} is not empty.

\begin{prop}
    \label{prop:cone_measure_condition}
    Let $D \subset \sphere$ be a smooth domain such that 
    \begin{equation}
        \label{eq:cone_ineq_Hausdorff}
        \haus(D) < \haus(\sphereplus).
    \end{equation}
    Then $\lambda(\cone, m)$ is attained, for any $m > 0$.
\end{prop}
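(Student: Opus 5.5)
By Theorem \ref{thm:cone_existence}, it suffices to verify the strict inequality \eqref{eq:cone_ineq}. By Remark \ref{rem:cone_scaling}, we may choose $m$ freely. The natural competitor is the spherical sector $\OD = \cone \cap B_1(O)$, rescaled to have measure $m$. Its first eigenfunction is the restriction to $\cone$ of the radial first Dirichlet eigenfunction of the ball. Hence
\begin{equation*}
    \lambda_1(\cone \cap B_\rho(O)) = \lambda_1(B_\rho(O)) = \frac{j^2}{\rho^2},
\end{equation*}
where $j = j_{N/2-1,1}$ is the first zero of the relevant Bessel function. The same formula applies to the half-ball $\Sigma_\sphereplus \cap B_{\rho'}(O)$. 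By Remark \ref{rem:cone_sphere}, the half-ball realizes $\lambda(\Sigma_\sphereplus, m)$.

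The comparison is then a computation of volumes. We have
\begin{equation*}
    |\cone \cap B_\rho| = \frac{\haus(D)}{N}\rho^N \qquad \text{and} \qquad |\Sigma_\sphereplus \cap B_{\rho'}| = \frac{\haus(\sphereplus)}{N}\rho'^N.
\end{equation*}
Imposing that both equal $m$ gives
\begin{equation*}
    \left(\frac{\rho}{\rho'}\right)^N = \frac{\haus(\sphereplus)}{\haus(D)} > 1
\end{equation*}
by \eqref{eq:cone_ineq_Hausdorff}. So $\rho > \rho'$, and therefore
\begin{equation*}
    \lambda(\cone, m) \leq \lambda_1(\cone \cap B_\rho) = \frac{j^2}{\rho^2} < \frac{j^2}{\rho'^2} = \lambda(\Sigma_\sphereplus, m).
\end{equation*}
This is exactly \eqref{eq:cone_ineq}, and Theorem \ref{thm:cone_existence} then yields attainment for every $m$.

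The one step needing care is the identification $\lambda_1(\cone\cap B_\rho) = \lambda_1(B_\rho)$. The restricted radial function is positive, satisfies the equation, vanishes on the spherical cap, and has zero normal derivative on $\pc \setminus \{O\}$, as noted after Proposition \ref{prop:cone_first_derivative}. By simplicity and the sign characterization of the first mixed eigenfunction (property (iii) in Section \ref{sec:cone_prelim}), a positive eigenfunction must correspond to $\lambda_1$. To see this, observe that it cannot be orthogonal to the first eigenfunction, which is also positive.

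For the half-space, the minimality of half-balls is taken from Remark \ref{rem:cone_sphere}. Even without that remark, the inequality $\lambda(\Sigma_\sphereplus,m) \le \lambda_1(\text{half-ball})$ alone would not suffice, since the argument needs the half-ball to realize the infimum, so Remark \ref{rem:cone_sphere} is genuinely needed. I expect no real obstacle beyond these checks.
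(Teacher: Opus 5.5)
Your proposal is correct and follows essentially the same argument as the paper. You compare the spherical sector of measure $m$ with the half-ball of measure $m$, use that both carry the radial first eigenfunction of the corresponding ball and that the half-ball realizes $\lambda(\Sigma_\sphereplus, m)$ (Remark \ref{rem:cone_sphere}), and let $\haus(D) < \haus(\sphereplus)$ force the sector's radius to be the larger one. The paper differs only in presentation: it fixes $m = |\OD|$ and obtains the strict inequality through the inclusion $B_{R_m}(O)\cap\cone \subset \OD$ with $R_m<1$ plus scaling, instead of your explicit Bessel-zero formula at arbitrary $m$, and it does not spell out your justification of $\lambda_1(\cone\cap B_\rho)=\lambda_1(B_\rho)$, which it takes for granted.
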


\begin{proof}
    Fix $m = |\Omega_D|$ and let $R_m$ be the radius of the ball such that
    \begin{equation*}
        |B_{R_m}(O)| = 2m.
    \end{equation*}
    By assumption \eqref{eq:cone_ineq_Hausdorff}, $R_m < 1$. Moreover, since the first eigenfunction in the ball is radial,
    \begin{equation*}
        \lambda_1(B_{R_m}(O)) = \lambda(\Sigma_\sphereplus, m) = \lambda_1(B_{R_m}(O) \cap \cone) > \lambda_1(\Omega_D),
    \end{equation*}
    since $(B_{R_m}(O) \cap \cone) \subset \Omega_D$ and by the scaling property of eigenvalues. Then
    \begin{equation*}
        \lambda(\cone, m) < \lambda(\Sigma_\sphereplus, m)
    \end{equation*}
    and we conclude by using Theorem \ref{thm:cone_existence} together with Remark \ref{rem:cone_scaling}.
\end{proof}

\section{Qualitative properties of minimizers}
\label{sec:cone_bound_conn_reg}

In this section, we study topological and regularity properties of minimizers for the problem of minimizing the first eigenvalue of $- \Delta$ with mixed boundary conditions:
\begin{equation}
    \label{eq:cone_min_B}
    \inf\{\lambda_1(\Omega) \ : \ \Omega \subset \cone \text{ is quasi-open}, \ |\Omega| = m\},
\end{equation}
where $m > 0$ is given.

\begin{remark}
    Recall that since the cone $\cone$ is invariant under scaling, it holds
    \begin{equation}
    \label{eq:cone_monotonicity_b}
    \lambda_1(t\Omega) = \frac{\lambda_1(\Omega)}{t^2}, \quad t\Omega = \{tx \in \R^N \ : \ x \in \Omega\}.
    \end{equation}
    Moreover, in view of Theorem \ref{thm:cone_existence}, we know that at least one solution for \eqref{eq:cone_min_B} exists (provided that \eqref{eq:cone_ineq} holds), for whichever $m$ is chosen. As is well known, \eqref{eq:cone_min_B} is equivalent to
    \begin{equation*}
        \inf\{ \lambda_1(\Omega)|\Omega|^{\frac{2}{N}} \ : \ \Omega \subset \cone \text{ is quasi-open}\},
    \end{equation*}
    because of \eqref{eq:cone_monotonicity_b} and since $|t\Omega| = t^N |\Omega|$.
\end{remark}

The next result is a well-known property of minimizers for spectral problems in $\R^N$, which also holds in $\cone$ because $\cone$ is invariant under scaling. Since it is fundamental for the proofs of the qualitative properties in this section, we report it here for the convenience of the reader.

\begin{lemma}[{\cite{Bucur2012, BucurFreitas2017}}]
    \label{lem:cone_equivalent_problems}
    For every $\tau > 0$, problem \eqref{eq:cone_min_B} is equivalent to
    \begin{equation}
        \label{eq:cone_min_equiv}
        \inf\{\lambda_1(\Omega) + \tau |\Omega| \ : \ \Omega \subset \cone \text{ is quasi-open}\},
    \end{equation}
    in the sense that if $\Omega$ is a solution to \eqref{eq:cone_min_B} (respectively, \eqref{eq:cone_min_equiv}), then there exists a scaling of $\Omega$ which is a solution for \eqref{eq:cone_min_equiv} (respectively, \eqref{eq:cone_min_B}).
\end{lemma}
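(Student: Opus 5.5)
The plan is to exploit the scaling law \eqref{eq:cone_monotonicity_b} together with $|t\Omega| = t^N|\Omega|$. Since the cone is invariant under dilations, $t\Omega\subset\cone$ is quasi-open whenever $\Omega$ is. The whole argument therefore reduces to one-variable optimization along the dilation orbit of a set. For a fixed quasi-open $\Omega\subset\cone$ of positive finite measure, set
\begin{equation*}
    f_\Omega(t) \coloneqq \lambda_1(t\Omega) + \tau|t\Omega| = \frac{\lambda_1(\Omega)}{t^2} + \tau t^N |\Omega|, \quad t>0 .
\end{equation*}
This function is strictly convex in $t$ and tends to $+\infty$ at $0$ and at $\infty$. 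Hence it has a unique minimizer $t_\Omega$, characterized by $2\lambda_1(\Omega)t^{-2} = N\tau t^N|\Omega|$. Writing $L(\Omega) \coloneqq \lambda_1(\Omega)|\Omega|^{2/N}$, which is scale invariant, a direct computation gives
\begin{equation*}
    \min_{t>0} f_\Omega(t) = c_{N,\tau}\, L(\Omega)^{\frac{N}{N+2}}
\end{equation*}
for an explicit constant $c_{N,\tau}>0$. The function $s\mapsto c_{N,\tau}s^{N/(N+2)}$ is strictly increasing. Sets of zero or infinite measure can be discarded: the former have $\lambda_1=+\infty$, and the latter give an infinite value of the functional.

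The first direction is from \eqref{eq:cone_min_B} to \eqref{eq:cone_min_equiv}. Let $\Omega$ solve \eqref{eq:cone_min_B}. By the preceding Remark it then minimizes $L$ among all quasi-open subsets of $\cone$, since any competitor can be rescaled to measure $m$. Put $\Omega^* = t_\Omega\Omega$. For any quasi-open $\omega\subset\cone$ we have
\begin{equation*}
    \lambda_1(\omega)+\tau|\omega| \ \geq\ \min_t f_\omega(t) = c\,L(\omega)^{N/(N+2)} \ \geq\ c\,L(\Omega)^{N/(N+2)} = f_\Omega(t_\Omega) = \lambda_1(\Omega^*)+\tau|\Omega^*|,
\end{equation*}
so $\Omega^*$ solves \eqref{eq:cone_min_equiv}.

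The converse direction runs the same chain backwards. If $\Omega^*$ solves \eqref{eq:cone_min_equiv}, then $t=1$ minimizes $f_{\Omega^*}$, so the value at $\Omega^*$ equals $c\,L(\Omega^*)^{N/(N+2)}$. This is at most $c\,L(\omega)^{N/(N+2)}$ for every $\omega$, because $t_\omega\omega$ is an admissible competitor. Strict monotonicity of $s\mapsto s^{N/(N+2)}$ then gives $L(\Omega^*)\le L(\omega)$ for all $\omega$. Rescaling $\Omega^*$ to measure $m$ produces a solution of \eqref{eq:cone_min_B}.

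The only step needing care is the reduction of the constrained problem \eqref{eq:cone_min_B} to minimization of the scale-invariant quantity $L$. This is exactly where dilation invariance of $\cone$ enters: it keeps rescaled sets inside the admissible class. Dilation also preserves the relative structure, namely the Dirichlet condition on $\Gz$ and the Neumann condition on $\G$, so \eqref{eq:cone_monotonicity_b} holds for quasi-open sets in the $\sobspcap$ framework. Once this is in place, everything else is elementary calculus.
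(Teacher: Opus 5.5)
Your proposal is correct and follows essentially the same route as the paper: you restrict to the dilation orbit $t\mapsto t^{-2}\lambda_1(\Omega)+\tau t^N|\Omega|$, find its unique minimizer, express the optimal value as $C_{N,\tau}\bigl(\lambda_1(\Omega)|\Omega|^{2/N}\bigr)^{N/(N+2)}$, and conclude by strict monotonicity of $s\mapsto s^{N/(N+2)}$, noting that a minimizer of the penalized problem must have optimal scaling factor $1$. Your explicit inequality chains in each direction just spell out what the paper states more tersely.
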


\begin{proof}
    For any fixed quasi-open set $\Omega \subset \cone$ and any given $\tau > 0$, there exists an optimal scaling factor $\bar t$ that minimizes the function
    \begin{equation*}
        t \in (0, + \infty) \mapsto G(t\Omega) = t^{-2}\lambda_1(\Omega) + t^N \tau |\Omega|.
    \end{equation*}
    Indeed, $G(t\Omega)$ is smooth, $G(t\Omega) \to + \infty$ as $t \to 0^+$ and $G(t\Omega) \to + \infty$ as $t \to + \infty$. Since
    \begin{equation*}
        \frac{d}{dt} G(t \Omega) = - 2 t^{-3} \lambda_1(\Omega) + N t^{N - 1} \tau |\Omega|,
    \end{equation*}
    then
    \begin{equation*}
        \frac{d}{dt}G(t\Omega) = 0
    \end{equation*}
    has the unique root
    \begin{equation*}
        \bar t_\Omega = \left(\frac{2 \lambda_1(\Omega)}{N \tau |\Omega|}\right)^{\frac{1}{N + 2}}.
    \end{equation*}

    Now, note that
    \begin{align}
        G(\bar t \Omega) 
        & = \underbrace{\left(\left(\frac{N \tau}{2}\right)^{\frac{2}{N + 2}} + \tau^{\frac{2}{N + 2}}\left(\frac{2}{N}\right)^{\frac{N}{N + 2}}\right)}_{\coloneqq C_{N, \tau}} \lambda_1(\Omega)^{\frac{N}{N + 2}} |\Omega|^{\frac{2}{N + 2}} \nonumber \\
        & = C_{N, \tau} \left(\lambda_1(\Omega) |\Omega|^{\frac{2}{N}}\right)^{\frac{N}{N + 2}}. \nonumber
    \end{align}
    
    Since the map
    \begin{equation*}
        s \in (0, + \infty) \mapsto C_{N, \tau} s^{\frac{N}{N + 2}}
    \end{equation*}
    is strictly increasing, then minimizing $G(\bar t_\Omega \Omega)$ among all quasi-open sets $\Omega \subset \cone$ is equivalent to \eqref{eq:cone_min_B}. Consequently, if a quasi-open set $\Omega$ minimizes \eqref{eq:cone_min_B}, then a suitable rescaling of $\Omega$ minimizes \eqref{eq:cone_min_equiv}. On the other hand, if $\Omega$ is a minimizer for \eqref{eq:cone_min_equiv}, then $\bar t_\Omega = 1$. In particular, $\Omega$ is a minimizer for \eqref{eq:cone_min_B} with $m = |\Omega|$. Then, for $m \neq |\Omega|$, a simple scaling of $\Omega$ yields a minimizer for \eqref{eq:cone_min_B}.
\end{proof}

\begin{remark}
    As a consequence of Lemma \ref{lem:cone_equivalent_problems}, the solutions of \eqref{eq:cone_min_B} will have the same boundedness, topological, and regularity properties as the solutions of \eqref{eq:cone_min_equiv}, since they are obtained by simply scaling the domain, which preserves all these properties.
\end{remark}

\subsection{Boundedness}

The idea is to adapt the arguments of \cite{Bucur2012} (see also \cite[Chapter 4]{Velichkov2015}) to the relative setting of the cone. Namely, we will show that any minimizer $\Omega$ for \eqref{eq:cone_min_B} is a shape subsolution for the energy, according to Definition \ref{def:cone_shape_subsolution}, which will imply the boundedness of $\Omega$, as well as the finite perimeter; see Proposition \ref{prop:cone_boundedness}.

The proofs are essentially the same as in \cite{Bucur2012}; hence, we will outline the main differences, omitting the details.

\begin{definition}
    \label{def:cone_shape_subsolution}
    A quasi-open set $\Omega \subset \cone$ is said to be a local shape subsolution for the (torsional) energy if there exists $\delta > 0$ and a constant $\Lambda > 0$ such that for every quasi-open subset $\omega \subset \Omega$ for which $d_\gamma(\Omega, \omega) < \delta$, it holds
    \begin{equation*}
        E(\Omega) + \Lambda |\Omega| \leq E(\omega) + \Lambda |\omega|
    \end{equation*}
    where $d_\gamma$ is the distance on the class of quasi-open sets for the topology of $\gamma$-convergence (see \eqref{eq:cone_def_d_gamma}).
\end{definition}

We begin with the following analogue of \cite[Lemma 3]{Bucur2012}.

\begin{lemma}
    \label{lem:cone_continuity_eigenvalues}
    Let $\Omega \subset \cone$ be a quasi-open set of finite measure. For every $k \in \N$, there exists a constant $c_k(\Omega)$ depending only on $\Omega$ such that for every $j \leq k$ and every quasi-open subset $\omega \subset \Omega$ it holds that
    \begin{equation*}
        \left|\frac{1}{\lambda_j(\Omega)} - \frac{1}{\lambda_j(\omega)} \right| \leq c_k(\Omega) d_\gamma(\Omega, \omega).
    \end{equation*}
\end{lemma}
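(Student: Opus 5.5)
We follow the proof of \cite[Lemma 3]{Bucur2012}, which uses only the variational structure of the problem; the geometry of $\cone$ enters only through the compact embedding $\sobspcap \hookrightarrow L^2(\cone)$ for $\Omega$ of finite measure. The central objects are the resolvent operators $R_\Omega, R_\omega : L^2(\cone) \to L^2(\cone)$. Here $R_\Omega f$ is the unique $v \in \sobspcap$ with $\int_\cone \nabla v \nabla \phi = \int_\cone f\phi$ for every $\phi \in \sobspcap$, and $R_\omega$ is defined in the same way. These operators are compact, self-adjoint and positive, and their eigenvalues are exactly $1/\lambda_j(\Omega)$ and $1/\lambda_j(\omega)$. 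Moreover $R_\Omega 1 = w_\Omega$ and $R_\omega 1 = w_\omega$ are the torsion functions. Since $H_0^1(\omega;\cone) \subset \sobspcap$, the min-max principle for compact self-adjoint operators gives
\begin{equation*}
\left|\frac{1}{\lambda_j(\Omega)} - \frac{1}{\lambda_j(\omega)}\right| = \frac{1}{\lambda_j(\Omega)} - \frac{1}{\lambda_j(\omega)} \le \sup_{f \in E_k,\ \|f\|_2=1} \int_\cone f\,(R_\Omega - R_\omega) f \, dx
\end{equation*}
for $j \le k$. Here $E_k$ is the span of the first $k$ eigenfunctions $u_1,\dots,u_k$ of $\Omega$. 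The inequality follows by testing the Courant--Fischer characterization of $1/\lambda_j(\omega)$ with $E_j \subset E_k$.

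Next we bound $\int f (R_\Omega - R_\omega) f$ for $f = \sum_{i \le k} a_i u_i$ with $\sum a_i^2 = 1$. The key ingredient is the $L^\infty$ bound on eigenfunctions, $\|u_i\|_\infty \le C(N,\cone)\lambda_i(\Omega)^{N/4}$. It is proved by the standard Moser iteration or heat-kernel argument. That argument works in $\sobspcap$ because $\cone$ is uniformly Lipschitz, so Sobolev inequalities hold on $H^1(\cone)$, and the lattice properties of the space are preserved. Hence $|f| \le M_k \coloneqq \sqrt{k}\, C \lambda_k(\Omega)^{N/4}$. Since $R_\Omega - R_\omega$ is a positive operator and is order preserving on nonnegative data, a maximum principle comparison gives $0 \le R_\omega g \le R_\Omega g$ for $g \ge 0$. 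Writing $f = f^+ - f^-$, we get $|(R_\Omega - R_\omega) f| \le (R_\Omega - R_\omega)|f| \le M_k (R_\Omega - R_\omega) 1 = M_k (w_\Omega - w_\omega)$. The middle step uses monotonicity of $R_\Omega - R_\omega$ on nonnegative functions, i.e.\ positivity of the operator $R_\Omega - R_\omega$ pointwise. Therefore
\begin{equation*}
\int_\cone f (R_\Omega - R_\omega) f \, dx \le M_k^2 \int_\cone (w_\Omega - w_\omega)\, dx = M_k^2\, d_\gamma(\Omega,\omega),
\end{equation*}
because $w_\Omega \ge w_\omega \ge 0$. This gives the lemma with $c_k(\Omega) = k\,C^2 \lambda_k(\Omega)^{N/2}$.

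The main obstacle is the pointwise positivity of $R_\Omega - R_\omega$, i.e.\ that $g \ge 0$ implies $R_\omega g \le R_\Omega g$. We would prove it by testing the equations with $(R_\omega g - R_\Omega g)^+$. This test function lies in $H_0^1(\omega;\cone)$, since it vanishes q.e.\ outside $\omega$ and $R_\Omega g \ge 0$. Subtracting the weak formulations yields $\int |\nabla (R_\omega g - R_\Omega g)^+|^2 = 0$, so the positive part vanishes. The $L^\infty$ estimate in the relative setting is the other technical point; we would invoke it via the Sobolev inequality on the cone, as in the proof of Lemma \ref{lem:cone_min_not_zero}, combined with Moser iteration.
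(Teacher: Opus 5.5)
Your proof is correct and takes the same route as the paper, which defers to Bucur's Lemma 3. That argument is the min-max comparison of $R_\Omega$ and $R_\omega$ on the span of the first $k$ eigenfunctions, plus the maximum-principle bound $|(R_\Omega - R_\omega)f| \le \|f\|_\infty (w_\Omega - w_\omega)$ and $L^\infty$ bounds on eigenfunctions (the paper imports the latter for mixed conditions from Buttazzo--Velichkov rather than running Moser iteration). The only point to make explicit is that $R_\Omega 1$ and $R_\omega 1$ should be read as $R_\Omega \chi_\Omega$ and $R_\omega \chi_\Omega$, since $1 \notin L^2(\cone)$; this is harmless because $|\Omega| < \infty$ and $\omega \subset \Omega$.
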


\begin{proof}
    The proof relies on properties of the resolvent operator for the Dirichlet problem and $L^\infty$ bounds for eigenfunctions. 
    
    As already explained in Section \ref{sec:cone_prelim}, the resolvent operator for the mixed boundary value problem is akin to the resolvent for the pure Dirichlet problem. Moreover, they give analogous uniform $L^\infty$ bounds; see \cite[Remark 2.6 and Proposition 2.7]{ButtazzoVelichkov2016}.
\end{proof}

It then follows that every solution of \eqref{eq:cone_min_equiv} is a shape subsolution for the energy.

\begin{prop}
    \label{prop:cone_energy_subsolution}
    Let $\Omega \subset \cone$ be a solution of \eqref{eq:cone_min_equiv}. Then $\Omega$ is a shape subsolution for the torsional energy.
\end{prop}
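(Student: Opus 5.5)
The plan follows \cite[Proposition 2]{Bucur2012}. Let $\Omega$ solve \eqref{eq:cone_min_equiv} with parameter $\tau>0$, and let $\omega\subset\Omega$ be quasi-open with $d_\gamma(\Omega,\omega)<\delta$, where $\delta$ will be fixed later. Minimality of $\Omega$ gives
\begin{equation*}
    \lambda_1(\Omega)-\lambda_1(\omega)\le \tau(|\omega|-|\Omega|).
\end{equation*}
The inclusion $\omega\subset\Omega$ together with monotonicity gives $\lambda_1(\omega)\ge\lambda_1(\Omega)$.

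The first step converts the eigenvalue gap into an energy gap. By Lemma~\ref{lem:cone_continuity_eigenvalues} with $k=1$,
\begin{equation*}
    \frac{1}{\lambda_1(\Omega)}-\frac{1}{\lambda_1(\omega)}\le c_1(\Omega)\,d_\gamma(\Omega,\omega),
\end{equation*}
so for $\delta$ small we get $\lambda_1(\omega)\le 2\lambda_1(\Omega)$. Hence
\begin{equation*}
    \lambda_1(\omega)-\lambda_1(\Omega)\ge \lambda_1(\Omega)^2\Big(\frac{1}{\lambda_1(\Omega)}-\frac{1}{\lambda_1(\omega)}\Big).
\end{equation*}

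The key estimate to prove is a lower bound
\begin{equation*}
    \frac{1}{\lambda_1(\Omega)}-\frac{1}{\lambda_1(\omega)}\ge c\,(E(\omega)-E(\Omega))
\end{equation*}
for some $c=c(\Omega)>0$. Here $E(\omega)-E(\Omega)=\tfrac12\int_\cone(w_\Omega-w_\omega)\,dx=\tfrac12 d_\gamma(\Omega,\omega)$, since $J(w_\Omega)=-\tfrac12\int w_\Omega$ and $w_\omega\le w_\Omega$ by the weak maximum principle. The maximum principle still holds for the mixed problem because $H^1_0(\omega;\cone)\subset H^1_0(\Omega;\cone)$.

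To get this bound, I would test the Rayleigh quotient of $\omega$ with the projection of $u_\Omega$ onto $H^1_0(\omega;\cone)$. Concretely, set $v=R_\omega(\lambda_1(\Omega)u_\Omega)$, where $R_\omega$ is the resolvent of the mixed problem on $\omega$. Then $1/\lambda_1(\omega)\ge \langle R_\omega f,f\rangle/\|f\|_2^2$, and the difference $R_\Omega f-R_\omega f$ for $f=\lambda_1(\Omega)u_\Omega$ is controlled from below. The standard way to do this uses $u_\Omega\le \|u_\Omega\|_\infty\lambda_1(\Omega)w_\Omega$ together with the comparison
\begin{equation*}
    R_\Omega f-R_\omega f\ge \big(\inf f\big)\,(w_\Omega-w_\omega)\quad\text{where } f>0.
\end{equation*}
The resolvent is positivity preserving, so this comparison transfers from the pure Dirichlet setting. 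The $L^\infty$ bound on $u_\Omega$ for the mixed problem is \cite[Proposition 2.7]{ButtazzoVelichkov2016}.

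Once the key estimate holds, combining the displays gives
\begin{equation*}
    E(\omega)-E(\Omega)\le C\tau(|\Omega|-|\omega|),
\end{equation*}
which is the shape subsolution inequality with $\Lambda=C\tau$.

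I expect the key lower bound to be the main obstacle, and specifically its pointwise lower bound on $f$: $u_\Omega$ is not bounded below. Bucur avoids this by working with $\int(R_\Omega-R_\omega)$ tested against $u_\Omega$ and using $\int (w_\Omega-w_\omega)u_\Omega$-type identities. I would copy that argument verbatim, checking only two things: that every integration by parts is valid in $H^1_0(\cdot;\cone)$, which holds because the Neumann part on $\pc$ imposes no boundary term, and that the Gagliardo--Nirenberg and Sobolev constants of Lemma~\ref{lem:cone_min_not_zero} replace the $\R^N$ ones.
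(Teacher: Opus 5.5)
There is a genuine gap: the argument runs in the wrong direction. Minimality of $\Omega$ in \eqref{eq:cone_min_equiv} gives a \emph{lower} bound on the eigenvalue gap, $\tau(|\Omega|-|\omega|)\le\lambda_1(\omega)-\lambda_1(\Omega)$. Your key estimate, together with $\lambda_1(\omega)\ge\lambda_1(\Omega)$, gives a second lower bound, $\lambda_1(\omega)-\lambda_1(\Omega)\ge c\,\lambda_1(\Omega)^2(E(\omega)-E(\Omega))$. Two lower bounds on the same quantity give no relation between $E(\omega)-E(\Omega)$ and $|\Omega|-|\omega|$. Your ``combining the displays'' step silently reverses the minimality inequality.

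The inequality you end with, $E(\omega)+C\tau|\omega|\le E(\Omega)+C\tau|\Omega|$, is also the reverse of Definition~\ref{def:cone_shape_subsolution}, and it is false. Remove from $\Omega$ a closed set of zero measure and small positive capacity: the volume is unchanged and $d_\gamma$ stays small, but $E$ strictly increases. Your key lower bound also fails for general quasi-open $\Omega$. For example, if $\Omega$ has two components and $\omega$ drops the one not carrying $u_\Omega$, the left side is $0$ while the energy gap is positive. So the ``main obstacle'' you anticipate (no pointwise lower bound on $u_\Omega$) comes from the sign error, not from a real difficulty.

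What is needed is an \emph{upper} bound on the eigenvalue gap in terms of the energy gap. That is exactly Lemma~\ref{lem:cone_continuity_eigenvalues}, which you invoke only to get $\lambda_1(\omega)\le 2\lambda_1(\Omega)$. Using $d_\gamma(\Omega,\omega)=2(E(\omega)-E(\Omega))$, which you derived correctly,
\begin{equation*}
    \tau(|\Omega|-|\omega|)\le\lambda_1(\omega)-\lambda_1(\Omega)=\lambda_1(\omega)\lambda_1(\Omega)\Big(\frac{1}{\lambda_1(\Omega)}-\frac{1}{\lambda_1(\omega)}\Big)\le 4c_1(\Omega)\lambda_1(\Omega)^2\big(E(\omega)-E(\Omega)\big).
\end{equation*}
This is $E(\Omega)+\Lambda|\Omega|\le E(\omega)+\Lambda|\omega|$ with $\Lambda=\tau/(4c_1(\Omega)\lambda_1(\Omega)^2)$. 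It is the paper's argument: Bucur's Theorem 2, via the scaling-equivalence lemma and Lemma~\ref{lem:cone_continuity_eigenvalues}. The tools you list --- $u_\Omega\le\lambda_1(\Omega)\|u_\Omega\|_\infty w_\Omega$, positivity of the resolvent, and the $L^\infty$ bound of Buttazzo--Velichkov --- are what prove that upper bound, not a lower one.
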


\begin{proof}
    As in \cite[Theorem 2]{Bucur2012}, it is a consequence of Lemma \ref{lem:cone_equivalent_problems} (with $\tau = 1$) and Lemma \ref{lem:cone_continuity_eigenvalues}.
\end{proof}

\begin{prop}
    \label{prop:cone_boundedness}
    Let $\Omega \subset \cone$ be a minimizer for \eqref{eq:cone_min_equiv}. Then $\Omega$ is bounded and has finite perimeter.
\end{prop}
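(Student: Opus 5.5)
By Proposition \ref{prop:cone_energy_subsolution}, $\Omega$ is a local shape subsolution for the torsional energy in the sense of Definition \ref{def:cone_shape_subsolution}. The plan is to run the argument of \cite[Theorem 3]{Bucur2012} (see also \cite[Chapter 4]{Velichkov2015}) for shape subsolutions, with $H^1_0(\,\cdot\,;\cone)$ in place of $H^1_0(\R^N)$. Throughout, $w=w_\Omega$ denotes the torsion function of $\Omega$, extended by zero to $\cone$.

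\textbf{Step 1: subsolution inequality with level-set competitors.} For small $\varepsilon>0$, take $\omega_\varepsilon=\{w>\varepsilon\}\subset\Omega$. Since $(w-\varepsilon)^+\to w$ in $H^1(\cone)$, one has $d_\gamma(\Omega,\omega_\varepsilon)\to0$, so the subsolution inequality applies to $\omega_\varepsilon$. Testing $E(\omega_\varepsilon)$ with $(w-\varepsilon)^+$ gives
\begin{equation*}
\frac12\int_{\{0<w\le\varepsilon\}}|\nabla w|^2\,dx+\Lambda\,|\{0<w\le\varepsilon\}|\le\varepsilon\int_\cone w\,dx .
\end{equation*}
This is the same inequality as in the Euclidean case. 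The only input used is that $(w-\varepsilon)^+$ lies in $H^1_0(\omega_\varepsilon;\cone)$, and no boundary condition is imposed on $\pc$. The same computation with cut-offs $w\,\phi$, where $\phi$ vanishes on a ball $B_r(x_0)$, gives a localized version.

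\textbf{Step 2: finite perimeter.} Combining Step 1 with the coarea formula and Cauchy--Schwarz yields
\begin{equation*}
\int_0^\varepsilon \haus(\partial^*\{w>s\}\cap\cone)\,ds\le\Big(\int_{\{0<w\le\varepsilon\}}|\nabla w|^2\Big)^{1/2}|\{0<w\le\varepsilon\}|^{1/2}\le C\varepsilon.
\end{equation*}
Hence the relative perimeters of $\{w>s\}$ in $\cone$ are bounded along a sequence $s\to0$. Lower semicontinuity of the perimeter then gives $P(\Omega;\cone)<\infty$. Since $\cone$ is locally Lipschitz, $P(\Omega)\le P(\Omega;\cone)+\haus(\pc\cap\partial\Omega)$. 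I plan to control the last term by the relative isoperimetric inequality combined with boundedness (Step 3). Alternatively, the perimeter can be understood as the relative one, which is what is needed later.

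\textbf{Step 3: boundedness.} Fix $x_0\in\cone$ far from the origin and set $m(r)=|\Omega\cap B_r(x_0)|$. The localized inequality of Step 1, together with the $L^\infty$ bound $\|w\|_\infty\le C(N,\cone)|\Omega|^{2/N}$, should give a differential inequality of the form
\begin{equation*}
m(r)^{\frac{N-1}{N}}\le C\,m'(r).
\end{equation*}
The $L^\infty$ bound follows as in \cite[Proposition 2.7]{ButtazzoVelichkov2016} via the Sobolev inequality in $\cone$. To obtain the differential inequality, use the relative isoperimetric or Sobolev inequality in $\cone$, with a constant that is uniform because $\cone$ is uniformly Lipschitz and scale invariant; this is the same tool as the Gagliardo--Nirenberg inequality in Lemma \ref{lem:cone_min_not_zero}. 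The resulting inequality gives a density estimate: every point of $\Omega$ (in the Lebesgue-density sense) has $m(r)\ge c r^N$ for $r\le1$, with $c$ uniform. Since $|\Omega|<\infty$, only finitely many disjoint unit balls can carry such mass, so $\Omega$ is bounded.

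\textbf{Main obstacle.} The hardest step is the uniformity of the Sobolev and isoperimetric constants near $\pc$, and near the vertex for balls $B_r(x_0)$ centered at points of $\pc$. I would handle this using the scaling invariance of $\cone$ together with its uniform Lipschitz character. This should give a relative isoperimetric inequality in $\cone\cap B_r(x_0)$ with a constant independent of $x_0$ and of $r\le1$, after which Bucur's argument transfers verbatim.
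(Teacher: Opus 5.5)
Your Steps 1--2 are essentially the finite-perimeter argument of \cite{Bucur2012} that the paper invokes, and they carry over to the cone. Indeed $w_{\{w>\varepsilon\}}=(w-\varepsilon)^+$, which also satisfies the Neumann condition on $\pc$, so $d_\gamma(\Omega,\omega_\varepsilon)=\int_\cone\min(w,\varepsilon)\,dx\to0$. There is one slip: the right-hand side produced by the computation in Step 1 is $\int_\cone\min(w,\varepsilon)\,dx\le\varepsilon|\Omega|$, not $\varepsilon\int_\cone w\,dx$. This does not affect Step 2.

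The genuine gap is Step 3. You assert that the differential inequality $m(r)^{\frac{N-1}{N}}\le C\,m'(r)$ ``should'' follow, but it does not follow from the ingredients you list.
\begin{itemize}
\item The functional has no perimeter term, so removing $\Omega\cap B_r(x_0)$ costs nothing comparable to $\haus(\Omega\cap\partial B_r(x_0))$.
\item The actual cost is $E(\Omega\setminus\overline{B_r(x_0)})-E(\Omega)$. This is one half of the Dirichlet energy of the function that equals $w$ on $\Omega\cap B_r(x_0)$ and is harmonic in $\Omega\setminus\overline{B_r(x_0)}$.
\item Bounding this energy with a cut-off $w\phi$ and Caccioppoli, using only $M=\|w\|_\infty$, gives something like $\Lambda|\Omega\cap B_r(x_0)|\le C(M^2r^{-2}+M)\,|\Omega\cap B_{4r}(x_0)|$.
\item That estimate is useless at small scales, and no relative isoperimetric inequality can close it.
\end{itemize}
The missing idea is control of $w$ at scale $r$ near $\partial\Omega$. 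This is the Alt--Caffarelli-type nondegeneracy lemma \cite[Lemma 1]{Bucur2012} that the paper adapts. There one compares $w_\Omega$ with an inner competitor built from an auxiliary function $v_r$ that vanishes on a smaller concentric ball. The $W^{1,1}$ trace inequality then shows that $w_\Omega\equiv0$ on $B_{r/2}(x)$ whenever $w_\Omega$ is sufficiently small compared with $r$ on $B_r(x)$. Boundedness follows because $w_\Omega\in L^1(\cone)$, together with a local $L^\infty$--$L^1$ bound coming from $\Delta w_\Omega+1\ge0$, makes this smallness hypothesis hold at every point far from $O$.

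So the real technical points in the cone are different from the one you single out. First, $v_r$ must satisfy the homogeneous Neumann condition on $\pc$, so that the competitor lies in $H^1_0(\Omega;\cone)$ and the integrations by parts produce no boundary term on $\pc$. Second, the trace and coarea formulas must hold relative to $\cone$. Uniformity of isoperimetric constants is not the issue, and without the nondegeneracy lemma your Step 3 does not yield boundedness.
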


\begin{proof}
    In view of Proposition \ref{prop:cone_energy_subsolution}, it suffices to show that local shape subsolutions for the torsional energy are bounded and have finite perimeter. The argument is the same as in \cite{Bucur2012}, so we just outline some simple necessary technical modifications. 
    
    Clearly, one is to work in the space $H_0^1(\Omega; \cone)$. Next, for an analogue of \cite[Lemma 1]{Bucur2012}, it suffices to consider the auxiliary functions $v_r$ with the appropriate homogeneous Neumann condition on $\pc$. This will guarantee that the test functions belong to the correct spaces, as well as the correct integration by parts when necessary. 
    
    Note that the boundary trace theorem in $W^{1, 1}$, as well as the coarea formula, are available in the relative setting (see \cite{Giusti1984}).
\end{proof}

\subsection{Topological properties}
We begin by proving that the minimizers are open, and the corresponding first eigenfunctions are Lipschitz continuous. To this end, we adapt the argument of \cite[Proposition 5.4]{Velichkov2015}, which in turn relies on adapting a well-known result of Alt and Caffarelli (\cite{AltCaffarelli1981}, see also \cite{BrianconHayouniPierre2005} and \cite[Lemma 5.1]{Velichkov2015}). 

\begin{prop}
    \label{prop:cone_open}
    Let $\Omega \subset \cone$ be a minimizer for \eqref{eq:cone_min_equiv}. Then $u_\Omega$ is locally Lipschitz continuous inside $\cone$ and $\Omega$ is open in $\cone$.
\end{prop}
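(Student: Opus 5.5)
The plan is to follow \cite[Proposition 5.4]{Velichkov2015}, which rests on the Alt--Caffarelli-type lemma \cite[Lemma 5.1]{Velichkov2015}. First, I would derive a quasi-minimality property for $u = u_\Omega$ from the minimality in \eqref{eq:cone_min_equiv} with $\tau = 1$. Fix $x_0 \in \cone$ and $r < \mathrm{dist}(x_0, \pc)$, so that $B_r(x_0) \subset \cone$ and the relative geometry plays no role. Let $v \in H^1(\cone)$ with $v - u \in H^1_0(B_r(x_0))$, and let $\omega = \Omega \cup \{v \neq 0\}$. Then $\omega$ is an admissible quasi-open competitor with $|\omega| \le |\Omega| + |B_r|$. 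Comparing $\lambda_1(\Omega) + |\Omega| \le \lambda_1(\omega) + |\omega|$ with the Rayleigh quotient of $v$ gives
\begin{equation*}
\int_\cone |\nabla u|^2 - \lambda_1(\Omega)\int_\cone u^2 \le \int_\cone |\nabla v|^2 - \lambda_1(\Omega) \int_\cone v^2 + C|B_r| ,
\end{equation*}
up to a normalization correction. Since $u \in L^\infty$ by the bounds of \cite[Proposition 2.7]{ButtazzoVelichkov2016}, that correction, which involves $\int (v^2 - u^2)$ for the relevant competitors (e.g. harmonic replacements in $B_r(x_0)$), is bounded by $C r^N$. Hence $u$ is an almost-minimizer of the Dirichlet energy with volume-type error $Cr^N$ in every interior ball.

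Second, I would run the Alt--Caffarelli argument. Compare $u$ with its harmonic replacement $h$ in $B_r(x_0)$. Since $|\Delta u| \le \lambda_1 \|u\|_\infty$ in the sense that $-\Delta u \le \lambda_1(\Omega) u$ holds globally as a measure, $u$ is subharmonic up to a bounded forcing term. The standard estimate $\int_{B_r}|\nabla (u-h)|^2 \le Cr^N$ together with a Campanato iteration shows that $\nabla u$ lies in the Morrey space $L^{2,N}$ locally. This is the key step. Then, using that $u \ge 0$ and $\Delta u + \lambda_1 u \ge 0$ (the latter coming from the minimality/Fatou argument as in \cite[Lemma 5.1]{Velichkov2015}), I would obtain the mean-value bound
\begin{equation*}
\frac{1}{r}\,\fint_{\partial B_r(x_0)} u \le C
\end{equation*}
whenever $u(x_0) = 0$. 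Combining this with interior gradient estimates for $-\Delta u = \lambda_1 u$ on $\{u>0\}$ gives local Lipschitz continuity of $u$ in $\cone$ (away from $\pc$, which is all the statement claims).

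Finally, openness follows once $u$ is continuous. The set $\{u>0\}$ is then open in $\cone$, and $\Omega = \{u>0\}$ up to a set of zero capacity. Replacing $\Omega$ by $\{u>0\}$ does not change $\lambda_1$, and it does not increase the measure, so $\{u>0\}$ is still a minimizer; identifying $\Omega$ with it, $\Omega$ is open in $\cone$.

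The main obstacle I expect is the second step: the inequality $\Delta u + \lambda_1 u \ge 0$ in the whole of $\cone$ for a merely quasi-open $\Omega$, and the resulting uniform decay estimate. I would prove the subsolution inequality first by testing with $\min(u, \varepsilon)$-type truncations as in Velichkov's treatment. Because all balls are taken strictly inside $\cone$, no Neumann boundary issues arise; near $\pc$ nothing is claimed.
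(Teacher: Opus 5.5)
Your overall route is the paper's. Its proof only observes that \cite[Lemma 5.1 and Proposition 5.4]{Velichkov2015} carry over to $H^1(\Sigma_D)$. Your interior quasi-minimality inequality $\int_{B_r(x_0)}|\nabla u|^2\le\int_{B_r(x_0)}|\nabla h|^2+Cr^N$ (for $B_r(x_0)\subset\Sigma_D$, $h$ the harmonic replacement) is correct, and so is your final openness argument.

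The gap is in the step you yourself call the key one. A Campanato iteration with an error of order exactly $r^N$ is the borderline case. From $\int_{B_\rho}|\nabla u|^2\le A(\rho/r)^N\int_{B_r}|\nabla u|^2+Cr^N$ one only gets $\int_{B_\rho}|\nabla u|^2\le C_\varepsilon\rho^{N-\varepsilon}$, i.e.\ $u\in C^{0,\alpha}_{\mathrm{loc}}$ for every $\alpha<1$. The Morrey space $L^{2,N}$ coincides with $L^\infty$, so your claim $\nabla u\in L^{2,N}_{\mathrm{loc}}$ is already the Lipschitz conclusion, and quasi-minimality alone cannot give it. Indeed, $u=x_1\log|x_1|$ satisfies $\int_{B_r}|\nabla(u-h)|^2\le\inf_{c\in\R^N}\int_{B_r}|\nabla u-c|^2\le Cr^N$ on all balls (because $\nabla u\in\mathrm{BMO}$), yet it is not Lipschitz. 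Also, $|\Delta u|\le\lambda_1\|u\|_\infty$ is false. Only the one-sided bound $\Delta u+\lambda_1(\Omega)u\ge0$ holds, and the positive part of $\Delta u$ charges $\partial\{u>0\}$.

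So the bound $\frac1r\cdot\big(\text{mean of }u\text{ on }\partial B_r(x_0)\big)\le C$ at zeros of $u$, which you only announce, is the actual content of \cite[Lemma 5.1]{Velichkov2015} (see also \cite{BrianconHayouniPierre2005}). It has to come from the sign of the measure $\mu:=\Delta u+\lambda_1(\Omega)u\ge0$, not from the iteration. For instance, let $G_r\ge0$ be the Dirichlet Green function of $B_r(x_0)$, which satisfies $G_r\ge c\,r^{2-N}$ on $B_{r/2}(x_0)\times B_{r/2}(x_0)$. Then
\begin{equation*}
Cr^N\ge\int_{B_r(x_0)}|\nabla(u-h)|^2=\iint G_r\,d(\Delta u)\,d(\Delta u)\ge c\,r^{2-N}\mu(B_{r/2}(x_0))^2-Cr^2\mu(B_r(x_0)).
\end{equation*}
The last term is $O(r^N)$ thanks to the a priori bound $\mu(B_r(x_0))\le Cr^{N-1-\varepsilon}$, which the H\"older estimate does provide. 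Hence $\mu(B_s(x_0))\le Cs^{N-1}$. Integrating $\frac{d}{ds}\big(\text{mean of }u\text{ on }\partial B_s(x_0)\big)=\Delta u(B_s(x_0))/\mathcal{H}^{N-1}(\partial B_s)\le C$ over $(0,r)$ then gives the mean-value bound when $u(x_0)=0$.

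Once this is in place, your last step goes through: interior gradient estimates for $-\Delta u=\lambda_1(\Omega)u$ on $\{u>0\}$, a set that is open already by the H\"older continuity.
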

\begin{proof}
    Begin by noting that \cite[Lemma 5.1]{Velichkov2015} works, with obvious small modifications, in the functional space $H^1(\cone)$. So does \cite[Proposition 5.4]{Velichkov2015}. In particular, note that if $\Omega$ is a minimizer for \eqref{eq:cone_min_equiv}, then it is also a minimizer for \cite[(5.10)]{Velichkov2015} (with the appropriate definition for the eigenvalue, in the correct functional space).
\end{proof}

\begin{prop}
    \label{prop:cone_connectedness}
    Let $\Omega$ be a minimizer for \eqref{eq:cone_min_B}. Then $\Omega$ is connected.
\end{prop}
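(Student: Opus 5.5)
We argue by contradiction, using the penalized formulation \eqref{eq:cone_min_equiv} together with the scaling structure of the cone. By Lemma \ref{lem:cone_equivalent_problems} it is enough to treat a minimizer $\Omega$ of \eqref{eq:cone_min_equiv} with $\tau = 1$. By Proposition \ref{prop:cone_open}, $\Omega$ is open in $\cone$, so it has well-defined connected components. Suppose $\Omega$ is not connected, and write $\Omega = \Omega_1 \cup \Omega_2$ with $\Omega_1, \Omega_2$ disjoint, nonempty, open in $\cone$ (each a union of components), so that $|\Omega_i| > 0$. Since the two pieces are separated, $H_0^1(\Omega; \cone) = H_0^1(\Omega_1; \cone) \oplus H_0^1(\Omega_2; \cone)$. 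The spectrum of $\Omega$ is therefore the union of the spectra of $\Omega_1$ and $\Omega_2$, and
\begin{equation*}
    \lambda_1(\Omega) = \min\{\lambda_1(\Omega_1), \lambda_1(\Omega_2)\}.
\end{equation*}
Say the minimum is attained by $\Omega_1$. Then $\lambda_1(\Omega_1) + |\Omega_1| < \lambda_1(\Omega) + |\Omega|$, because $|\Omega_2| > 0$. This contradicts minimality in \eqref{eq:cone_min_equiv}.

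The same argument can be phrased directly for \eqref{eq:cone_min_B}: $\lambda_1(\Omega_1) = \lambda_1(\Omega)$ while $|\Omega_1| < m$. Rescaling $t\Omega_1$ with $t > 1$ so that $|t\Omega_1| = m$, which is admissible since $\cone$ is scale invariant, gives by \eqref{eq:cone_monotonicity_b}
\begin{equation*}
    \lambda_1(t\Omega_1) = t^{-2}\lambda_1(\Omega) < \lambda_1(\Omega),
\end{equation*}
again a contradiction. This version is the cleaner one to write, because it only needs the scaling law.

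The main obstacle is justifying the splitting $H_0^1(\Omega; \cone) = H_0^1(\Omega_1;\cone)\oplus H_0^1(\Omega_2;\cone)$, and with it the identity for $\lambda_1(\Omega)$. Openness of $\Omega$ is needed here: for a merely quasi-open set, "components" are not immediate, which is why Proposition \ref{prop:cone_open} must come first. One subtlety is that $\Omega$ is open relative to $\cone$, not $\R^N$. Two components could have closures that meet along $\pc$, yet they are still disjoint open subsets of $\cone$. For $u \in H_0^1(\Omega;\cone)$, the restriction $u\chi_{\Omega_1}$ lies in $H^1(\cone)$: $u$ vanishes q.e. on $\cone\setminus\Omega$, hence on $\partial\Omega_1 \cap \cone$, and its gradient is $\chi_{\Omega_1}\nabla u$ a.e. To prove this I would use the standard truncation $(u-\varepsilon)^+ \chi_{\Omega_1}$. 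Its support is contained in the set $\{u \ge \varepsilon\}$, which for $u$ quasi-continuous is relatively closed up to small capacity, so on it the pieces are separated by open sets and the product with $\chi_{\Omega_1}$ acts locally like multiplication by a smooth function. Then let $\varepsilon \to 0$.

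Alternatively, one could first prove that $u_\Omega > 0$ on a single component and then restrict to that component, but the decomposition argument above already covers this.
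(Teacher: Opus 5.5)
Your proof is correct and is the same argument as the paper's: use openness from Proposition \ref{prop:cone_open}, note that $\lambda_1(\Omega)$ is attained on a proper open piece of $\Omega$ with strictly smaller measure, and rescale that piece via \eqref{eq:cone_monotonicity_b} to get a smaller eigenvalue at the same measure. Your justification of the splitting $H_0^1(\Omega;\cone)=H_0^1(\Omega_1;\cone)\oplus H_0^1(\Omega_2;\cone)$ is more than the paper writes out; the worry about closures meeting on $\pc$ is harmless, because $\cone$ is open and membership in $H^1(\cone)$ is tested only against $C_c^\infty(\cone)$, so the localization is purely an interior question.
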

\begin{proof}
    We know that $\Omega$ is open (from Proposition \ref{prop:cone_open}). Should $\Omega$ have more than one connected component, then by a suitable scaling of one of the components (whose first eigenvalue is precisely the minimum for \eqref{eq:cone_min_B}), we would be able to find a set which satisfies the measure constraint and which has a smaller eigenvalue, a contradiction. Hence $\Omega$ is connected.
\end{proof}

\subsection{Regularity of the free boundary}

Following \cite{BriançonLamboley2009}, we have the following regularity result regarding the free boundary $\Gz$ for the minimizers $\Omega$ of \eqref{eq:cone_min_B}.

\begin{prop}
    \label{prop:cone_regularity}
    Let $\Omega \subset \cone$ be a minimizer for \eqref{eq:cone_min_B}. Then
    \begin{equation*}
        \Gz = \Gzreg \cup \GZsing,
    \end{equation*}
    where $\Gzreg$ is the reduced part of $\Gz$ (see, e.g., \cite{Maggi2012, EvansGariepy2015}) and $\GZsing$ is a singular part of the boundary, such that
    \begin{enumerate}[label=(\roman*)]
        \item $\haus(\Gz \setminus \Gzreg) = 0$;

        \item $\Gzreg$ is a real-analytic hypersurface in $\cone$;

        \item there exists $\Lambda > 0$ such that
        \begin{equation}
            \label{eq:cone_overdet_distributions}
            \Delta u_\Omega + \lambda_1(\Omega) u_\Omega = \sqrt{\Lambda} \haus\lfloor\Gzreg
        \end{equation}
        in the sense of distributions in $\cone$, where $\haus\lfloor\Gzreg$ denotes the restriction of the Hausdorff measure to $\Gzreg$.
    \end{enumerate}
\end{prop}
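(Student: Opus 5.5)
The plan is to reduce to the penalized problem \eqref{eq:cone_min_equiv} via Lemma \ref{lem:cone_equivalent_problems}, and then treat $u_\Omega$ as a local minimizer of an Alt--Caffarelli type one-phase functional in the interior of the cone. All three statements concern only $\Gz = \po \cap \cone$, so they are local in $\cone$. Around any point $x_0 \in \Gz$ we may work in a ball $B_r(x_0) \subset \cone$, where the Neumann part $\pc$ does not interfere. Only the regular part of $\Gz$ is claimed to be analytic; the behaviour near $\partial\Gz \subset \pc$ is not asserted, which is why this localization is legitimate.

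\textbf{Step 1 (local quasi-minimality).} After scaling we may assume $\Omega$ minimizes $\lambda_1(\cdot) + \tau|\cdot|$. For any $v \in H^1(\cone)$ with $v - u_\Omega$ compactly supported in $B_r(x_0)\subset\cone$ and $\|v\|_2$ close to $1$, test $\{v>0\}$ in \eqref{eq:cone_min_equiv} with the Rayleigh quotient of $v$. This gives
\begin{equation*}
    \int |\nabla u_\Omega|^2 - \lambda_1(\Omega)\int u_\Omega^2 + \tau|\{u_\Omega>0\}| \le \int |\nabla v|^2 - \lambda_1(\Omega)\int v^2 + \tau |\{v>0\}| + C r^{N+1}.
\end{equation*}
Here the error term comes from normalizing $\|v\|_2$ and from the $L^\infty$ bound on $u_\Omega$ (see \cite{ButtazzoVelichkov2016}). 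So $u_\Omega$ is an almost-minimizer of the one-phase functional $\int|\nabla v|^2 + \tau|\{v>0\}|$ with a lower-order term, exactly as in \cite{BriançonLamboley2009}.

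\textbf{Step 2 (nondegeneracy and density bounds).} Proposition \ref{prop:cone_open} already gives local Lipschitz continuity and openness. Next I would prove nondegeneracy: if $\frac{1}{r}\fint_{\partial B_r(x_0)} u_\Omega \le c$, then $u_\Omega \equiv 0$ on $B_{r/2}(x_0)$. This follows by the Alt--Caffarelli comparison with the harmonic replacement, cut off near $x_0$. Lipschitz continuity plus nondegeneracy yield uniform two-sided density estimates for $\Omega$ and for its complement at free boundary points. Hence $\haus\lfloor\Gz$ is locally comparable to the perimeter measure, and $\Omega$ has locally finite perimeter in $\cone$, consistent with Proposition \ref{prop:cone_boundedness}. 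The density estimates give $\haus(\Gz\setminus\partial^*\Omega)=0$, which is item (i).

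\textbf{Step 3 (the measure $\Delta u + \lambda u$ and its density).} Since $u_\Omega\ge0$ satisfies $-\Delta u_\Omega \le \lambda_1 u_\Omega$ in $\cone$, the distribution $\mu = \Delta u_\Omega + \lambda_1(\Omega)u_\Omega$ is a nonnegative Radon measure supported on $\Gz$. The Lipschitz and density bounds give $c\,\haus\lfloor\Gz \le \mu \le C\,\haus\lfloor\Gz$, so $\mu = q\,\haus\lfloor\Gz$ with $q$ bounded. To identify $q$ on the reduced boundary, I would blow up at $x_0\in\Gzreg$. The rescalings $u_\Omega(x_0+r\cdot)/r$ converge, using the Lipschitz bound and Step 1, to a global one-phase minimizer vanishing on a half-space, hence to $\alpha (x\cdot\nu)^-$. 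The optimality condition (inner variations $x\mapsto x+t\Phi(x)$ with $\Phi$ compactly supported in $\cone$, comparing energies to first order) forces $\alpha^2 = \tau$. This gives $q = \sqrt{\tau}$ $\haus$-a.e., i.e. \eqref{eq:cone_overdet_distributions} with $\Lambda=\tau$ (rescaled for \eqref{eq:cone_min_B}).

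\textbf{Step 4 (regularity).} At points of $\Gzreg$ the blow-up is flat, so $u_\Omega$ is a viscosity solution that is flat in a small ball. The Alt--Caffarelli flatness-implies-$C^{1,\alpha}$ theorem (the version allowing a right-hand side, \cite{BriançonLamboley2009}) gives $C^{1,\alpha}$ regularity of $\Gz$ near such points. The hodograph/Kinderlehrer--Nirenberg bootstrap for the free boundary problem $-\Delta u=\lambda_1 u$, $|\nabla u|=\sqrt\tau$ then yields analyticity, which is item (ii).

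The main obstacle is Step 3/4: showing that the blow-up limit at reduced-boundary points is the half-plane solution with slope exactly $\sqrt\tau$, uniformly enough to enter the flatness regime. I would handle it by the Alt--Caffarelli argument, applied verbatim in balls contained in $\cone$, rather than by a new proof.
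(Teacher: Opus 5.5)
Your proposal is correct and is essentially the paper's argument. The paper proves this proposition by invoking the local Alt--Caffarelli-type regularity theory of Brian\c{c}on--Lamboley, and your Steps 1--4 write out exactly that machinery: quasi-minimality, nondegeneracy and density bounds, the measure $\Delta u_\Omega+\lambda_1(\Omega)u_\Omega$, blow-up at reduced boundary points, and flatness implying analyticity. The only difference is the set-up: the paper uses boundedness (Proposition~\ref{prop:cone_boundedness}) to confine the problem to $B_R(O)\cap\cone$ and runs the volume-constrained argument there, while you use scale invariance (Lemma~\ref{lem:cone_equivalent_problems}) to work with the penalized functional \eqref{eq:cone_min_equiv} in interior balls $B_r(x_0)\subset\cone$, which avoids handling the volume constraint directly and does not need that confinement.
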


\begin{proof}
    Since the arguments are essentially those carried out in \cite{BriançonLamboley2009}, which are mostly of a local nature, we only outline the small necessary modifications.

    First, let us observe that we know that the minimizer $\Omega \subset \cone$ for \eqref{eq:cone_min_B} is bounded, by Proposition \ref{prop:cone_boundedness}. Hence there exists $R > 0$ large enough so that instead of looking at the whole space $H^1(\cone)$, we can fix the Sobolev space $H_0^1(B_R(O); \cone)$, for some $R > 0$ large enough such that $\Omega \subset B_R(O)$, define our eigenvalue problems accordingly (that is, for a domain $\omega \subset (B_R(O) \cap \cone)$, we define the eigenvalue problem as in  \eqref{eq:cone_eigenvalue_problem} but with the homogeneous Dirichlet condition also on $\partial \omega \cap \partial B_R(O)$), and $\Omega$ will be a minimizer for the first eigenvalue also in this restricted setting. Let us remark that, in this way, all integrations by parts work, and moreover, since $H_0^1(B_R(O); \cone) \hookrightarrow L^2(B_R(O))$ with compact injection, the arguments relying on the convergence of bounded sequences in $H_0^1(B_R(O); \cone)$ can be carried out.
\end{proof}

\begin{remark}
    As shown in \cite{LamboleySicbaldi2014} by an asymptotic expansion of the eigenfunction near the corner, critical shapes must touch the boundary of $\cone$ orthogonally.
\end{remark}

\section{Back to the overdetermined problem}
\label{sec:cone_conclusion}
In view of the integration by parts formula (see, e.g., \cite[Theorem 2.10]{Giusti1984}), \eqref{eq:cone_overdet_distributions} means that $u_\Omega$ satisfies the overdetermined condition (recall \eqref{eq:cone_overdet_pde}) in a sort of distributional/measure-theoretic sense. We will now show that \eqref{eq:cone_overdet_pde} holds in the classical sense on the regular part $\Gzreg$ of the boundary $\Gz$.

\begin{theorem}
    \label{thm:cone_overdet_minimizer}
    Let $\Omega \subset \cone$ be a minimizer for \eqref{eq:cone_min_B}. Then it holds that
    \begin{equation}
        \label{eq:cone_overdet_reg}
        \frac{\partial u_\Omega}{\partial \nu} = - \sqrt{\dfrac{2 \lambda_1(\Omega)}{N |\Omega|}} \quad \text{ on } \Gzreg.
    \end{equation}
\end{theorem}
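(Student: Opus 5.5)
The plan is to combine the first-order optimality condition from Proposition \ref{prop:cone_regularity} with a scaling argument. The scaling argument identifies the constant through Lemma \ref{lem:cone_equivalent_problems}. After rescaling, $\Omega$ minimizes the penalized functional $\lambda_1(\cdot) + \tau|\cdot|$ for the value of $\tau$ making $\bar t_\Omega = 1$, namely
\begin{equation*}
    \tau = \frac{2\lambda_1(\Omega)}{N|\Omega|}.
\end{equation*}
So the claim amounts to showing that $|\partial u_\Omega/\partial\nu|^2 = \tau$ on $\Gzreg$. This is the natural Euler--Lagrange (Hadamard) condition for $\lambda_1 + \tau|\cdot|$, since the first variation of $\lambda_1$ is $-\int (\partial_\nu u)^2 V\cdot\nu$ and that of the volume is $\int V\cdot \nu$.

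\textbf{Step 1: localized domain variations.} Fix $x_0 \in \Gzreg$; by Proposition \ref{prop:cone_regularity}(ii), $\Gz$ is analytic near $x_0$. Take a smooth vector field $V$ compactly supported in a small ball $B_\rho(x_0) \Subset \cone$ (so $V$ vanishes near $\pc$ and the Neumann part is untouched), and set $\Omega_t = (I+tV)(\Omega)$. Since $u_\Omega$ is smooth up to $\Gzreg \cap B_\rho(x_0)$ (analyticity of the boundary plus elliptic regularity for $-\Delta u=\lambda u$ with Dirichlet data there), the computation of Proposition \ref{prop:cone_first_derivative} applies locally via the Hadamard formula. It gives
\begin{equation*}
    \frac{d}{dt}\Big|_{t=0}\bigl(\lambda_1(\Omega_t)+\tau|\Omega_t|\bigr) = \int_{\Gz} \Bigl(\tau - \Bigl(\frac{\partial u_\Omega}{\partial\nu}\Bigr)^2\Bigr) V\cdot\nu \, d\haus .
\end{equation*}
Minimality forces this to vanish for every such $V$. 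Since $V\cdot\nu$ is an arbitrary smooth function on $\Gzreg\cap B_\rho(x_0)$, we get $(\partial_\nu u_\Omega)^2=\tau$ there. Alternatively one can skip the penalized problem and use volume-preserving variations with a Lagrange multiplier as in Lemma \ref{lem:cone_overdet_critical_point}, identifying the multiplier afterwards.

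\textbf{Step 2: identify the constant.} If one works with the constrained problem \eqref{eq:cone_min_B}, the multiplier $\Lambda$ can be computed via the dilation field $V(x)=x$, which is tangent to $\pc$ and therefore admissible. Scaling gives $\frac{d}{dt}\lambda_1(t\Omega)|_{t=1} = -2\lambda_1(\Omega)$ and $\frac{d}{dt}|t\Omega|=N|\Omega|$. Combined with the Hadamard formula $\frac{d}{dt}\lambda_1 = -\int_{\Gz}(\partial_\nu u)^2 x\cdot\nu$ and $(\partial_\nu u)^2=c$ a.e.\ on $\Gz$ (by Step 1 and $\haus(\Gz\setminus\Gzreg)=0$), this yields
\begin{equation*}
    c = \frac{2\lambda_1(\Omega)}{N|\Omega|}.
\end{equation*}
Care is needed here, because this global variation touches the singular set, so I prefer the penalized route where $\tau$ is read off directly from $\bar t_\Omega=1$. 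Finally, $u_\Omega>0$ in $\Omega$ and Hopf's lemma at smooth points give $\partial_\nu u_\Omega<0$, hence the negative square root.

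\textbf{Main obstacle.} The hardest step is rigorously justifying the shape derivative for the quasi-open minimizer under localized perturbations, since $\Omega$ is only known to be open with $\Gz$ regular off a $\haus$-null set. I would handle this as in \cite{BriançonLamboley2009}: take $V$ supported away from $\GZsing$, and use the fact that $\Omega_t$ agrees with $\Omega$ outside $B_\rho(x_0)$. Then the eigenvalue derivative can be computed through the transported weak formulation as in Proposition \ref{prop:cone_first_derivative}, with the boundary integration by parts performed only on the smooth piece $\Gzreg\cap B_\rho(x_0)$. A cross-check is available from \eqref{eq:cone_overdet_distributions}: on $\Gzreg$ the distribution $\Delta u + \lambda_1 u$ equals $-\partial_\nu u\,\haus$, so $-\partial_\nu u_\Omega=\sqrt{\Lambda}$, and the task reduces to showing $\Lambda = \tau$.
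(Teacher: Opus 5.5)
Your proposal is correct and follows essentially the paper's proof: localized variations $V$ supported in a small ball around $x_0 \in \Gzreg$, the Hadamard formulas for $\lambda_1$ and for the volume, optimality, and Hopf's lemma for the sign. The only cosmetic difference is that the paper differentiates the scale-invariant functional $\lambda_1(\Omega_t)|\Omega_t|^{2/N}$, whereas you use $\lambda_1 + \tau|\cdot|$ with $\tau = \frac{2\lambda_1(\Omega)}{N|\Omega|}$ (for which $\Omega$ itself is a minimizer, with no rescaling needed); both give the same Euler--Lagrange identity $\lambda' + \tau\,\frac{d}{dt}|\Omega_t|\big|_{t=0} = 0$.
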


\begin{proof}
    We may argue as in the proof of \cite[Proposition 7.4]{IacopettiPacellaWeth2022}.

    Fix $x_0 \in \Gzreg$ and let $B_r(x_0)$ be a small ball such that
    \begin{equation*}
        B_{2r}(x_0) \subset \cone, \quad (B_{2r}(x_0) \cap \Gz) \subset \Gzreg.
    \end{equation*}
    Let $\varphi \in C_c^\infty(B_r(x_0))$ and $\bar \nu$ be an extension of the normal vector $\nu$ on $\Gz$ to a smooth vector field defined in $\overline{B_r(x_0)}$ (see, e.g., \cite[Section 5.4]{HenrotPierre2018} for the construction). Then we consider the smooth vector field
    \begin{equation*}
        V = \varphi \bar \nu.
    \end{equation*}
    By construction, $V: \R^N \to \R^N$ is a smooth vector field with support in $B_r(x_0)$, $V(O) = 0$ and $V(x) = 0 \in T_x\pc$ for $x \in \pc \setminus\{O\}$. Then we consider the induced deformations
    \begin{equation*}
        \Omega_t = \xi_t(\Omega), \quad t \in (-\delta, \delta)
    \end{equation*}
    where $\delta > 0$ is small enough such that the flow $\xi_t$ corresponding to $V$ preserves $\pc$. In addition, we note that, up to choosing a smaller $\delta$, it holds that
    \begin{equation*}
        \xi_t(x) = x \quad \forall x \in \cone \setminus B_{\frac{3}{2}r}(x_0), \ t \in (-\delta, \delta).
    \end{equation*}

    For $t \in (-\delta, \delta)$, let $u_t \in H_0^1(\Omega_t, \cone)$ be the positive $L^2$-normalized first eigenfunction of $\Omega_t$. The proof of Proposition \ref{prop:cone_first_derivative} can be easily adapted to this general perturbation, with even more regularity (cf. \cite[Proposition 5.3.10]{HenrotPierre2018}). We obtain that the map
    \begin{equation*}
        t \in (-\delta, \delta) \mapsto u_t \in H^1(\cone)
    \end{equation*}
    is differentiable, where we abuse notation to denote an $H^1(\cone)$ extension of $u_t$ (see \cite[Section 5.3.6]{HenrotPierre2018}). We also obtain that
    \begin{equation*}
        t \in (-\delta, \delta) \mapsto |\nabla u_t|^2 \in L^1(\cone)
    \end{equation*}
    is differentiable at $t = 0$. Let us point out that the derivative
    \begin{equation*}
        \wtu = \left. \frac{d}{dt}u_t \right|_{t = 0} \in H^1(\Omega)
    \end{equation*}
    is the solution of
    \begin{equation*}
        \left\{ 
            \begin{array}{rcll}
                - \Delta \wtu & = & \lambda_1(\Omega) \wtu + \lambda' u & \quad \text{ in } \Omega \\
                \wtu & = & -\mfrac{\partial u}{\partial \nu} (V \cdot \nu) & \quad \text{ on } \Gz \\
                \mfrac{\partial \wtu}{\partial \nu} & = & 0 & \quad \text{ on } \G \\
                \displaystyle \int_\Omega \wtu u \ dx & = & 0 &
            \end{array}
        \right.
        ,
    \end{equation*} 
    where
    \begin{equation*}
        \lambda' \coloneqq \left. \frac{d}{dt} \lambda_t \right|_{t = 0}
    \end{equation*}
    By standard regularity theory, $\wtu \in W^{2, 2}(\Omega \cap B_{\frac{3}{2}r}(x_0))$ and is smooth in $\Omega$.
    
    By standard elliptic regularity results, the first eigenfunction $u$ of $\Omega$ is smooth in $\Omega \cap B_r(x_0)$. In particular,
    \begin{equation*}
        u \in W^{2, 2}(\Omega \cap B_r(x_0)),
    \end{equation*}
    and therefore
    \begin{equation*}
        |\nabla u|^2 \varphi \bar \nu \in W^{1, 1}(\cone, \R^N).
    \end{equation*}
    
    Then, we are able to apply Hadamard's formula (\cite[Theorem 5.2.2]{HenrotPierre2018}) to the function
    \begin{equation*}
        t \in (-\delta, \delta) \mapsto \lambda_t = \int_{\Omega_t}|\nabla u_t|^2 \in \R, 
    \end{equation*}
    which yields, since all objects are smooth enough for the application of Green's formula
    \begin{align}
        \lambda'
        & = 2 \int_\Omega \nabla u \nabla \wtu \ dx + \int_\Omega \divergence (|\nabla u|^2 \varphi \bar \nu) \ dx \nonumber \\
        & = 2 \underbrace{\int_\Omega \lambda_1 \wtu u \ dx}_{= 0} + 2\lambda' \underbrace{\int_\Omega u^2 \ dx}_{= 1} + \int_{\Gz \cap B_r(x_0)} |\nabla u|^2 \varphi \ d\sigma,
    \end{align}
    where $d\sigma$ is the area element of $\Gz \cap B_r(x_0)$, and the first integral is zero because of the $L^2$-orthogonality between $\wtu$ and $u$ (which comes from differentiating the normalization condition $\|u_t\|_2^2 = 1$), whence
    \begin{equation*}
        \lambda' = - \int_{\Gz \cap B_r(x_0)} |\nabla u|^2 \varphi \ d\sigma
    \end{equation*}

    On the other hand, for the function
    \begin{equation*}
        t \in (-\delta, \delta) \mapsto |\Omega_t| \in \R,
    \end{equation*}
    the standard application of Hadamard's formula yields
    \begin{equation*}
        \left. \frac{d}{dt} |\Omega_t| \right|_{t = 0} = \int_\Gz V \cdot \nu \ d\sigma = \int_{\Gz \cap B_r(x_0)} \varphi \ d\sigma.
    \end{equation*}

    Now, since $\Omega$ is a minimizer for \eqref{eq:cone_min_equiv}, it is also a minimizer for the scaling-invariant shape functional
    \begin{equation*}
        \Omega \mapsto \lambda_1(\Omega)|\Omega|^{\frac{2}{N}}.
    \end{equation*}
    Then, exploiting the optimality, we have
    \begin{align}
        0 
        & = \left. \frac{d}{dt} (\lambda_1(\Omega_t)|\Omega_t|^{\frac{2}{N}}) \right|_{t = 0} \nonumber \\
        & = |\Omega|^{\frac{2}{N}} \lambda' + \lambda_1(\Omega) \frac{2}{N} |\Omega|^{\frac{2}{N} - 1} \left(\left. \frac{d}{dt} |\Omega_t| \right|_{t = 0}\right), \nonumber
    \end{align}
    which leads to
    \begin{equation*}
        \int_{\Gz \cap B_r(x_0)} \left(|\nabla u|^2 - \frac{2 \lambda_1(\Omega)}{N |\Omega|} \right) \varphi \ d\sigma = 0.
    \end{equation*}
    Since $x_0$ and $\varphi$ are arbitrary, and $\mfrac{\partial u}{\partial \nu} < 0$ on $\Gz$ (by Hopf's Lemma), then \eqref{eq:cone_overdet_reg} follows.
\end{proof}

\begin{remark}
    As expected, the value $\sqrt{\frac{2\lambda_1(\Omega)}{N|\Omega|}}$ agrees with the Pohozaev identity (if $\Gz$ is regular).
\end{remark}

\begin{remark}
    \label{rem:cone_non-radial}
    Let $D \subset \sphere$ be a smooth domain such that $\haus(D) < \haus(\sphereplus)$ and $\mu_1 < N - 1$. Note that the minimizer $\Omega$ and the corresponding first eigenfunction $u_\Omega$ cannot be radial. Indeed: the only possible radial domains in a non-convex cone for which $\haus(\po \cap \pc) > 0$ are the intersection with an annulus or a half-sphere centred on a flat part of $\pc$. However, in the former case, the eigenfunction could be extended to the full annulus, contradicting Serrin's Theorem (\cite{Serrin1971}); in the latter case, it would contradict \eqref{eq:cone_ineq}. 
\end{remark}

Recall that $\mu_1$ is the first eigenvalue of \eqref{eq:cone_Neumann_eigenvalue_problem_D}. Then we can state the following result regarding non-radial solutions for the overdetermined problem
\begin{equation*}
    \label{eq:cone_overdet_pde_b}
    \left\{
    \begin{array}{rcll}
        - \Delta u & = & \lambda_1(\Omega) u & \quad \text{ in } \Omega \\
        u & = & 0 & \quad \text{ on } \Gz \\
        \displaystyle \mfrac{\partial u}{\partial \nu} & = & 0 & \quad \text{ on } \G \setminus \{O\} \\
        \mfrac{\partial u}{\partial \nu} & = & \text{constant} & \quad \text{ on } \Gz
    \end{array}
    \right.
\end{equation*}

\begin{theorem}
    \label{thm:cone_main}
    Let $\cone$ be the cone spanned by the smooth domain $D \subset \sphere$, with $N \geq 3$. If $\haus(D) < \haus(\sphereplus)$ and $\mu_1 < N - 1$, then there exists a non-radial open bounded domain $\Omega \subset \cone$, with an $\haus$-a.e. analytic boundary, such that 
    \eqref{eq:cone_overdet_pde} admits a non-radial solution $u_\Omega$ (with the boundary conditions satisfied $\haus$-a.e.).
\end{theorem}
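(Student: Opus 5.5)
The plan is to assemble the results of the previous sections; no new estimate is needed. First, since $\haus(D) < \haus(\sphereplus)$, Proposition \ref{prop:cone_measure_condition} gives the strict inequality \eqref{eq:cone_ineq} for every $m>0$. Theorem \ref{thm:cone_existence} (with Remark \ref{rem:cone_scaling}) then yields, for any fixed $m$ (say $m = |\OD|$), a quasi-open minimizer $\Omega \subset \cone$ of \eqref{eq:cone_min_B}. By Lemma \ref{lem:cone_equivalent_problems}, a rescaling of $\Omega$ minimizes \eqref{eq:cone_min_equiv}, and scaling preserves all the qualitative properties. So Proposition \ref{prop:cone_boundedness} gives that $\Omega$ is bounded, Proposition \ref{prop:cone_open} that it is open with $u_\Omega$ locally Lipschitz in $\cone$, and Proposition \ref{prop:cone_connectedness} that it is connected, i.e. a bounded domain.

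Second, I would address regularity and the overdetermined condition. Proposition \ref{prop:cone_regularity} splits $\Gz = \Gzreg \cup \GZsing$ with $\haus(\Gz \setminus \Gzreg)=0$ and $\Gzreg$ real-analytic, so the relative boundary is $\haus$-a.e. analytic. The eigenfunction $u_\Omega$ solves $-\Delta u = \lambda_1(\Omega)u$ in $\Omega$ and vanishes on $\Gz$ in the $H^1_0(\Omega;\cone)$ sense; by Lipschitz continuity this holds pointwise. The Neumann condition on $\G \setminus\{O\}$ holds weakly, and classically wherever $\G$ is smooth, by local regularity for the mixed problem away from $\partial\Gz$. Theorem \ref{thm:cone_overdet_minimizer} provides the constant normal derivative $-\sqrt{2\lambda_1(\Omega)/(N|\Omega|)}$ on $\Gzreg$, hence $\haus$-a.e. on $\Gz$. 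I also need $\haus(\G)>0$, which is implicit in the setting. If $\G$ were $\haus$-null, $\Omega$ would be a Dirichlet minimizer compactly inside the cone up to a null set, and Faber--Krahn would give $\lambda_1(\Omega) \geq \lambda_1(B^m) > \lambda(\Sigma_\sphereplus,m)$, contradicting \eqref{eq:cone_ineq}.

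Third, I would prove non-radiality. The hypothesis $\mu_1 < N-1$ enters through Theorem \ref{thm:cone_stability}(i): $\OD$ is an unstable critical domain. So some polar-graph perturbation in $\M$ strictly lowers $\lambda_1$ at fixed volume, and $\OD$ (hence any radial sector $\Sigma_D \cap B_\rho$) is not a minimizer. It remains to exclude the other radial candidates, following Remark \ref{rem:cone_non-radial}. A radial $\Omega$ with $\haus(\G)>0$ must be a sector of a ball or annulus centred at $O$, or a half-ball centred on a flat portion of $\pc$. The sector of a ball is excluded by instability. The annular sector is excluded by extending $u_\Omega$ radially to the full annulus, which would solve Serrin's overdetermined problem there, impossible. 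The half-ball is excluded because it would give $\lambda(\cone,m)=\lambda(\Sigma_\sphereplus,m)$, contradicting \eqref{eq:cone_ineq}. Since $u_\Omega$ is non-radial whenever $\Omega$ is, $u_\Omega$ is the desired non-radial solution.

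The main obstacle is making this classification of radial candidates rigorous. One must specify what "radial" means: radial about $O$, or about some other point compatible with $\pc$. One must also handle the case where $\Omega$ is a radial set that is not a polar graph, where Theorem \ref{thm:cone_stability} does not apply directly. I would first try to show that if $u_\Omega$ is radial about $O$, its level sets are spheres. Connectedness then forces $\Omega$ to be a sector of a ball or an annulus, and each case is handled as above.
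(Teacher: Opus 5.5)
Your proposal is correct and is essentially the paper's proof. Fix $m=|\OD|$, get a minimizer from Proposition \ref{prop:cone_measure_condition} and Theorem \ref{thm:cone_existence}, transfer boundedness, openness, connectedness and a.e.\ analyticity from Section \ref{sec:cone_bound_conn_reg}, take the constant normal derivative from Theorem \ref{thm:cone_overdet_minimizer}, and exclude radial minimizers via Theorem \ref{thm:cone_stability}(i) for the sector and Remark \ref{rem:cone_non-radial} for annular sectors and half-balls. Your extra checks are sound refinements of points the paper leaves implicit: $\haus(\G)>0$ via Faber--Krahn, and the level-set argument reducing the case ``radial about $O$'' to ball or annular sectors. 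The classification of radial candidates centred elsewhere is also asserted without proof in the paper.
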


\begin{proof}
    Follows by choosing $m = |\Omega_D|$ and combining Theorem \ref{thm:cone_stability}, Proposition \ref{prop:cone_measure_condition}, the results of Section \ref{sec:cone_bound_conn_reg}, and Theorem \ref{thm:cone_overdet_minimizer}, and taking Remark \ref{rem:cone_non-radial} into account.
\end{proof}

\begin{remark}[On $N = 2$]
    We note that on dimension $N = 2$, there are two conflicting possibilities for the opening angle $\theta$ (which plays the role of $\haus(D)$): either $\theta < \pi$ (in which case the cone is convex and therefore the radial sector is stable) or $\theta > \pi$, in which case the cone is non-convex but the existence of a minimizer is not guaranteed.
\end{remark}

\begin{remark}
    Let us comment on some interesting directions for further study.
    \begin{enumerate}[label=\roman*)]
        \item Uniqueness: It would be interesting to understand whether the minimizer $\Omega$ is unique or not, as well as whether it (they?) coincides with the minimizing sets for the torsion and/or isoperimetric problems studied in \cite{IacopettiPacellaWeth2022}. In my opinion, a multiplicity result for minimizing sets would be surprising (and fascinating), but it is reasonable to expect that a critical shape for one of these problems is a critical shape for the others as well.

        \item Nonlinear problems: in \cite{AfonsoIacopettiPacella2024Energypublished}, only the stability is studied, and the existence of minimizers for nonlinear problems is left open. It seems likely that our arguments could be adapted to the study of the best constant for the subcritical Sobolev inequality:
        \begin{equation*}
            C_p(\Omega) = \inf_{\substack{u \in H_0^1(\Omega \cup \Gamma) \\ \|u\|_p = 1}} \int_\Omega |\nabla u|^2 \ dx, \quad 2 < p < 2^*.
        \end{equation*}
        The stability/instability and existence of a minimizer should be very similar, while the regularity and other qualitative properties may be more challenging. It would be interesting to understand if the scheme here presented could be followed in this case, giving rise to a nonlinear, non-radial, positive solution for the overdetermined problem.

        \item Bifurcation: When $\mu_1 = N - 1$, the problem becomes degenerate, and one expects some bifurcation to occur. This has been done in cylinders (\cite{LianPacellaSicbaldi2025}) and is expected to hold also in the cone, but the geometry is more challenging.
    \end{enumerate}
\end{remark}

\section*{Acknowledgements}
This research was partially supported by Gruppo Nazionale per l'Analisi Matematica, la Probabilità e le loro Applicazioni (GNAMPA) of the Istituto Nazionale di Alta Matematica (INdAM).

\bibliographystyle{acm}
\bibliography{ref_math}

\end{document}